\newtheorem{assumption}{Assumption}[section]
\newtheorem{proposition}{Proposition}[section]
\newtheorem{exam}{Example}[section]
\newtheorem{lemma}{Lemma}[section]
\newtheorem{theorem}{Theorem}[section]
\newtheorem{definition}{Definition}[section]
\newtheorem{remark}{Remark}[section]
\theoremstyle{definition}
\def\sgn{{\mathrm{sgn}}}
\def\calA{{\mathcal{A}}}
\def\calI{{\mathcal{I}}}
\title{A Unified Primal Dual Active Set Algorithm for Nonconvex Sparse Recovery}
\author{Jian Huang\thanks{Department of Applied Mathematics, The Hong Kong Polytechnic University, Hong Kong. (j.huang@polyu.edu.hk)}
\and  Yuling Jiao\thanks{School of Statistics and Mathematics, Zhongnan University of Economics and Law, Wuhan, 430063, P.R. China. (yulingjiaomath@whu.edu.cn)}
\and Bangti Jin\thanks{Department of Computer Science, University College London, Gower Street, London WC1E 6BT, UK. (b.jin@ucl.ac.uk, bangti.jin@gmail.com)}
\and Jin Liu\thanks{Center of Quantitative Medicine Duke-NUS Medical School, Singapore. (jin.liu@duke-nus.edu.sg)}
\and Xiliang Lu\thanks{School of Mathematics and Statistics, Wuhan University, Wuhan 430072, P.R. China, and
Hubei Key Laboratory of Computational Science (Wuhan University), Wuhan, 430072, China. (xllv.math@whu.edu.cn)}
\and Can Yang\thanks{Department of Mathematics, The Hong Kong University of Science and Technology, Clear Water Bay, Hong Kong. (macyang@ust.hk)}
}
\begin{document}

\maketitle

\begin{abstract}
In this paper, we consider the problem of recovering a sparse signal based on penalized least squares
formulations. We develop a novel algorithm of primal-dual active set type for a class of
nonconvex sparsity-promoting penalties, including $\ell^0$, bridge, smoothly clipped absolute
deviation, capped $\ell^1$ and minimax concavity penalty. First
we establish the existence of a global minimizer for the related optimization problems. Then
we derive a novel necessary optimality condition for the global minimizer using the associated
thresholding operator. The solutions to the optimality system are coordinate-wise
minimizers, and under minor conditions, they are also local minimizers. Upon introducing
the dual variable, the active set can be determined using the primal and dual variables together. Further, this
relation lends itself to an iterative algorithm of active set type which at each step involves
first updating the primal variable only on the active set and then updating the dual variable
explicitly. When combined with a continuation strategy on the regularization parameter,
the primal dual active set method is shown to converge globally  to the underlying regression target
under certain regularity conditions. Extensive numerical experiments with both simulated and real data
demonstrate its superior performance in efficiency and accuracy compared with the existing sparse recovery methods.\\
\noindent\textbf{Keywords:} nonconvex penalty; sparsity; primal-dual active set algorithm;  continuation; consistency
\bigskip
\noindent
\textbf{Running title: UPDAS for nonconvex sparse recovery}
\end{abstract}

\section{Introduction}\label{sec:intro}

In this paper, we develop a fast algorithm of primal dual active set (PDAS) type for a class of
nonconvex optimization problems arising in sparse recovery. Sparse recovery is a fundamentally
important problem in statistics, machine learning and signal processing. In statistics, sparsity
is one vital variable selection tool for constructing parsimonious models that admit easy
interpretation \cite{Tibshirani:1996}. In signal processing, especially compressive sensing,
sparsity represents an important structural property that can be effectively exploited for data
acquisition, signal transmission, storage and processing etc \cite{CandesTao:2005,Donoho:2006}.
Generally, the forward model is formulated as
\begin{equation}\label{model}
  y = \Psi x^{\dag} + \eta,
\end{equation}
where the vector $x^{\dag} \in \mathbb{R}^{p}$ denotes the regression coefficient or
the signal to be recovered, the vector $\eta \in
\mathbb{R}^{n} $ is the random error term, and the matrix $\Psi\in\mathbb{R}^{n\times p}$ is a design matrix or model
describing the system response mechanism in signal processing. Throughout, we assume that the matrix $\Psi$
has normalized column vectors $\{\psi_i\}$, i.e., $\|\psi_i\| = 1$, $i=1,...,p$, where $\|\cdot\|$
denotes the Euclidean norm of a vector. When
$n\ll p$, problem \eqref{model} is severely underdetermined (and ill-posed), and hence it is challenging
to obtain a meaningful solution. The sparsity approach looks for a solution with many zero
entries, and it opens a novel avenue for resolving the issue.

One popular method for realizing sparsity constraints is basis pursuit \cite{Chen:1998}
or lasso \cite{Tibshirani:1996}. It leads to a convex but nonsmooth optimization problem:
\begin{equation}\label{reglasso}
 \min_{x \in \mathbb{R}^{p}}  \tfrac{1}{2}\|\Psi x
 -y\|^{2} + \lambda\|x\|_{1},
\end{equation}
where $\|\cdot\|_1$ denotes the $\ell^1$-norm of a vector, and $\lambda>0$ is a regularization parameter.
Since its introduction \cite{Chen:1998,Tibshirani:1996}, problem \eqref{reglasso} has gained immense
popularity in many diverse disciplines, which can largely be attributed to the fact that problem \eqref{reglasso} admits
efficient numerical solution. The convexity of the problem allows designing fast and globally convergent
minimization algorithms, e.g., gradient projection method and coordinate descent algorithm; see
\cite{TroppWright:2010} for an overview. Theoretically, minimizers to \eqref{reglasso} enjoy attractive
statistical properties \cite{ZhaoYu:2006,CandesTao:2005,MeinshausenBuhlmann:2006}. In particular, under certain regularity
conditions (e.g., restricted isometry property and restricted eigenvalue condition) on the design matrix
$\Psi$ and the sparsity level of the true signal $x^\dag$, it can produce models with good estimation and
prediction accuracy, and also the support of the true signal can be correctly identified with a high probability
\cite{ZhaoYu:2006}.

However, it is also well known that the convex model \eqref{reglasso} has some drawbacks: it requires more
restrictive conditions on the matrix $\Psi$ and more data in order to recover exactly the signal than
nonconvex ones, e.g., bridge penalty \cite{ChartrandStaneva:2008,FoucartLai:2009,Sun:2012}; and it tends
to produce biased estimates for large coefficients \cite{ZhangHuang:2008}, and hence lacks oracle property
\cite{FanLi:2001,FanPeng:2004}. To circumvent these drawbacks, a number of nonconvex penalties have been proposed,
including $\ell^{0}$, bridge \cite{FrankFriedman:1993,Fu:1998}, capped-$\ell^{1}$ \cite{Zhang:2010b},
smoothly clipped absolute deviation (SCAD) \cite{FanLi:2001} and minimax concave penalty (MCP)
\cite{Zhang:2010a} etc; see Section \ref{ssec:nonconvex} for details.

The nonconvex approach leads to the following optimization problem:
\begin{equation}\label{eqn:model}
  \min_{x\in\mathbb{R}^p} J(x) = \tfrac{1}{2}\|\Psi x - y\|^2 + \sum_{i=1}^p \rho_{\lambda,\tau}(x_i),
\end{equation}
where $\rho_{\lambda,\tau}$ is a nonconvex penalty, $\lambda>0$ is a regularization parameter,
and $\tau$ controls the {degree of} concavity of the penalty (see Section
\ref{ssec:nonconvex} for details). The nonconvexity and nonsmoothess of the penalty $\rho_{\lambda,
\tau}$ poses significant challenge in their mathematical analysis and efficient numerical solutions.
Nonetheless, their attractive theoretical properties \cite{ZhangZhang:2012} and empirical
successes have generated much interest in developing efficient and accurate numerical algorithms.

\subsection{Literature overview on algorithms for nonconvex sparse recovery}

In this part, we provide an overview about existing algorithms for some popular nonconvex penalties.
First we briefly survey specialized methods for the $\ell^0$, bridge, SCAD and MCP, separately.

First, for the $\ell^0$ penalty, iterative hard thresholding is very popular \cite{Kingsbury:2001,
BlumensathDavies:2008}. The iterates generated by the algorithm are descent to the objective functional and
converge to a local minimizer, with an asymptotic linear convergence, if the matrix $\Phi$ satisfies certain
conditions \cite{BlumensathDavies:2008}. {There are a number of closely related}
iterative methods, e.g., orthogonal matching pursuit  \cite{TroppGilbert:2007} and CoSaMP
\cite{NeedellTropp:2009}, PDAS \cite{JiaoJinLu:2015} and SDAR \cite{HuangJiaoLiuLu:2017}.
Algorithmically, these methods all exploit the  dual and / or primal information
to adaptively update the signal support, and each step involves one least-squares problem on
the support only, which allows significantly reducing the computational complexity for sparse solutions.
Further, mixed integer programming was adopted  for the $\ell^0$ penalty \cite{Bertsimas:2016,LiuYaoLi:2016},
which is also applicable to MCP and SCAD.

Second, for the bridge penalty, one popular idea is to use the iteratively reweighted
least-squares method together with suitable smoothing of the singularity at the origin
\cite{ChartrandYin:2008,LaiWang:2011}. In the works \cite{LaiWang:2011,LaiXuYin:2013}, the
convergence of the iterates to a critical point of the smoothed functional was established; see
\cite{ZhaoLi:2012} for an alternative scheme and its convergence analysis. In the work \cite{Lu:2012},
a unified convergence analysis was provided, and new variants were also developed.
Each iterate of the method in \cite{LaiWang:2011} and \cite{ZhaoLi:2012} respectively
requires solving a penalized least-squares problem and a weighted lasso problem, which can
be fairly expensive for high-dimensional problems. For the bridge penalty, one can
also employ an iterative thresholding algorithm, for which the iterates converge subsequentially,
and the limit satisfies a necessary optimality condition \cite{BrediesLorenz:2009}.

Third and last, for the SCAD, Fan and Li  \cite{FanLi:2001} proposed to use a local quadratic
approximation (LQA) to the nonconvex penalty  and one single Newton step for optimizing
the resulting functional. Later a local linear approximation (LLA) was suggested
\cite{ZouLi:2008} to replace the LQA, which leads to a one-step sparse estimator.
For the closely related MCP, Zhang \cite{Zhang:2010a}
developed an algorithm that keeps track of multiple local minima in order to select a
solution with desirable statistical properties.

In the literature, there are also several general-purposed algorithms that aim at treating the model
\eqref{eqn:model} in a unified framework, including majorization-minimization, iterative thresholding,
coordinate descent, DC programming, and proximal gradient method etc. The first algorithm is based on the idea of majorization-minimization,
where each step involves a reweighted $\ell^1$ or $\ell^2$ subproblem, and includes the LLA and LQA
for SCAD and multi-stage convex relaxation \cite{Zhang:2010b} for the smoothed bridge and the
capped $\ell^1$ penalty. Numerically, the cost per iteration is that of the $\ell^2$/$\ell^1$
solver, and thus can be expensive. The subproblems may be solved
approximately, e.g., with one or several gradient steps, in order to enhance the computational efficiency.
Theoretically, the sequence of iterates is descent for the functional, but the convergence of the sequence
itself is generally unclear.  In the work \cite{GongZhangLuHuangYe:2013}, a general iterative shrinkage
and thresholding algorithm was developed, and the convergence to a critical point was shown under a
coercivity assumption on the objective functional. The bridge, SCAD, MCP, capped $\ell^1$ and log-sum penalties
were demonstrated. The second algorithm is based on coordinate descent, which at each step updates one
component of the signal vector in either Jacobi \cite{She:2009} or Gauss-Seidel \cite{BrehenyHuang:2011,
MazumderFriedmanHastie:2011} fashion for the SCAD and MCP.
Theoretically, any cluster point of the iterates is a stationary point \cite{Tseng:2001}.
Numerical experiments \cite{MazumderFriedmanHastie:2011} also verified its efficiency for such penalties.
Third, in the work \cite{GassoRakotomanonjyCanu:2009}, an algorithm was proposed based on decomposing the nonconvex
penalty into the difference of two convex functions and then applying DC programming. The authors
illustrated the idea on the bridge, SCAD, and capped-$\ell^1$ penalty. Like the first algorithm, each
iteration involves a convex weighted lasso problem (and thus can be expensive). Fourth, the path-following proximal gradient method
was proposed for MCP, SCAD and capped-$\ell^1$ \cite{WangLiuZhang:2014, LohWainwright:2015}.
Fifth and last, Chen et al \cite{ChenNiuYuan:2013} (see also \cite{Chen:2012}) derived affine-scaled second-order
necessary and sufficient conditions for local minimizers to the model \eqref{eqn:model} in the case of the
bridge, SCAD and MCP, and developed a globally convergent smoothing trust-region Newton method.
Meanwhile, in \cite{HintermullerWu:2012} a superlinearly convergent regularized Newton method was developed.

\subsection{Contributions}

The main contributions of this work are three-folded.

First, we establish the existence of a global minimizer to problem \eqref{eqn:model}; see Proposition \ref{thm:Exist}. To
the best of our knowledge, the existence issue has not been thoroughly studied in prior works. In this work, we also derive
a necessary optimality condition of global minimizers to \eqref{eqn:model} using the associated thresholding operator, and
prove that any solution to the necessary optimality condition is a coordinate-wise minimizer. Further, we study the relation
between a coordinate-wise minimizer and local minimizer to problem \eqref{eqn:model},  and provide numerically verifiable
sufficient conditions for a coordinate-wise minimizer to be a local minimizer in Theorem \ref{thm:localmin}. These results
represent the essential theoretical contributions of this work.

Second, inspired by the necessary optimality condition of coordinate-wise minimizer, we develop a UPDAS method with
continuation (UPDASC) to approximate the solution path  for problem \eqref{eqn:model} with all five popular  nonconvex
penalties listed in Table \ref{tab:nonconvex}. The algorithm is straightforward to implement. Further, we propose a new
tuning parameter selection rule, which couples seamlessly with the continuation strategy without any extra computational
cost. We prove the solutions along the path converge globally to the underlying regression coefficient  under certain
regularity conditions on the design matrix $\Psi$; see Theorem \ref{thm:convergence}. This represents the main algorithmic innovation of the work.

Third and last, we conduct extensive numerical experiments with both simulated and real data to demonstrate the efficiency
and accuracy of UPDASC, as well as the feasibility of the proposed tuning parameter selection rule. In particular, our methods are several
times faster than \texttt{glmnet}, which is one of the fastest Lasso solvers currently available. The \texttt{MATLAB} and \texttt{R} packages are available at
the following links \url{http://www0.cs.ucl.ac.uk/staff/b.jin/software/updasc.zip} and  \url{https://github.com/gordonliu810822/PDAS}, respectively.

Now we put the present work in the context of statistical estimation.
Recently, there is an important line of ongoing research aiming at bounding the estimation error of some popular algorithms
for high-dimensional nonconvex penalized regression to close the gap between statistics and computation,
e.g.,  multi-stage convex relaxation \cite{Zhang:2010b}, DC programming \cite{WangKimLi:2013}, LLA \cite{FanXueZou:2014}, and
path-following proximal gradient method \cite{WangLiuZhang:2014, LohWainwright:2015}. The proposed UPDASC is along this line
of research. i.e., it aims at finding a good approximation of the true signal. In particular, the convergence (consistence) result
in Theorem \ref{thm:convergence} is in the sense of statistics, i.e., convergence to the underlying regression coefficient,
instead of in the sense of optimization, where convergence to local (global) minimizers of a given objective function is
of major interest. The afore-mentioned prior works consider only first-order methods, while the proposed UPDASC is a Newton type method.
Generally, designing and analyzing fast and stable Newton type algorithms for high dimension penalized regression remain a very
challenging task. In the prior works \cite{LiSunToh:2016,HuangJiaoLuYang:2016} and  \cite{JiaoJinLu:2015,HuangJiaoLiuLu:2017},
Newton type methods have been developed for lasso and $\ell^0$ problems, respectively. UPDASC in the present work is a unified framework of
Newton type method  to handle general nonconvex penalized regression, which represents an important step forward along
the research direction, and holds significant potential for nonconvex sparse recovery.

\subsection{Organization of the paper}

The rest of the paper is organized as follows. In Section \ref{sec:setting}, we  describe the nonconvex penalties
and establish the existence of a global minimizer to problem \eqref{eqn:model}. In Section \ref{sec:op},
we first derive the thresholding operator for each penalty, and then use it in the necessary optimality
condition, whose solutions are coordinate-wise minimizers to problem \eqref{eqn:model}. Further, we give
sufficient conditions for a coordinate-wise minimizer to be a local minimizer. In Section \ref{sec:alg},
by introducing a dual variable, we rewrite the necessary optimality condition
and the active set using both primal and dual variables. Based on this fact, we develop a unified PDAS algorithm for all five
nonconvex penalties. Further, we establish the global convergence of the
algorithm when it is coupled with a continuation strategy. Finally, numerical results for several
examples are presented in Section \ref{sec:num} to illustrate the efficiency and accuracy of the algorithm.
The proofs of the theoretical results can be found in the supplementary materials.

\section{Problem formulation}\label{sec:setting}

In this section, we specify explicitly the nonconvex penalties of interest, and
discuss the existence of a global minimizer to problem \eqref{eqn:model}.

\subsection{Nonconvex penalties}\label{ssec:nonconvex}

We focus on five commonly used nonconvex penalties, {i.e., $\ell^0$, bridge, SCAD, MCP and capped $\ell^1$,}
for recovering sparse signals; see Table \ref{tab:nonconvex} for the explicit formulas (and the associated thresholding
operators, to be defined below). Next we briefly review these nonconvex penalties.

\begin{table}
  \centering
  \caption{Nonconvex penalty functions $\rho_{\lambda,\tau}(t)$ and the thresholding
  operators $S_{\lambda,\tau}^\rho(v)$. In the Table, $(t^*,T^*)$ and $G(v)$ are given in
  Lemma \ref{thm:tT} and the proof of Proposition \ref{thm:thresholding}, cf. Appendix
  \ref{app:thresholding}.}\label{tab:nonconvex}{\scalebox{.9}{
  \begin{tabular}{lcc}
    \hline
     penalty & $\rho_{\lambda,\tau}(t)$ & $S^\rho_{\lambda,\tau}(v)$\\
    \hline
    lasso \cite{Tibshirani:1996,Chen:1998}  & $\lambda|t|$  & $\sgn(v)\max(|v|-\lambda,0)$ \\
    $\ell^0$ \cite{Akaike:1974}  & $\left\{\begin{array}{ll}\lambda & t\neq 0 \\[1.0ex]
     0 & t=0 \end{array}\right.$ & $\left\{\begin{array}{ll} 0 & |v| < \sqrt{2\lambda} \\[1.0ex]
        \{0, \sgn(v)\sqrt{2\lambda}\} & |v| = \sqrt{2\lambda} \\[1.1ex]
      v & |v| > \sqrt{2\lambda} \end{array}\right.$\\
    bridge, $0<\tau<1$ \cite{FrankFriedman:1993,Fu:1998}& $\lambda|t|^\tau$ &  $\left\{\begin{array}{ll} 0 & |v| < T^* \\[1.1ex]
          \{0, \sgn(v)t^*\} & |v| = T^* \\ [1.1ex]
        \mathop\textrm{argmin}\limits_{u>0}\,G(u) & v > T^*\\[1.1ex]
     -S^{\ell^\tau}_{\lambda,\tau}(-v) & v<-T^* \end{array}\right.$\\
    capped-$\ell^1$, $\tau > \frac{1}{2}$ \cite{Zhang:2010b} & $\left\{\begin{array}{ll}
      \lambda^2\tau & |t|>\lambda\tau\\[1.1ex]
       \lambda |t|& |t|\leq\lambda\tau\end{array}\right.$  &  $\left\{\begin{array}{ll}0 &|v| \leq \lambda \\[1.1ex]
      \sgn(v)(|v| - \lambda) &\lambda <|v| < \lambda(\tau + \frac{1}{2})\\[1.1ex]
      \sgn(v)(\lambda\tau \pm \frac{\lambda}{2}) & |v| = \lambda(\tau + \frac{1}{2})\\[1.1ex]
       v & |v|> \lambda(\tau + \frac{1}{2})
      \end{array}\right.$\\
    SCAD, $\tau >2$ \cite{FanLi:2001} &  $\left\{\begin{array}{ll}\frac{\lambda^2(\tau+1)}{2} & |t|>\lambda\tau\\[1.1ex]
     \frac{\lambda\tau|t|-\frac{1}{2}(t^2+\lambda^2)}{\tau-1} & \lambda<|t|\leq \lambda\tau\\[1.1ex]
      \lambda|t| & |t|\leq\lambda\end{array}\right.$   & $\left\{\begin{array}{ll}0 & |v| \leq \lambda \\[1.1ex]
     \sgn(v)(|v| - \lambda) &\lambda <|v| \leq 2\lambda \\[1.1ex]
     \sgn(v)\frac{(\tau-1)|v| - \lambda\tau}{\tau -2} & 2\lambda < |v| \leq \lambda\tau\\[1.1ex]
     v & |v| > \lambda\tau \end{array}\right.$\\
    MCP, $\tau>1$ {\cite{Zhang:2010a}} & $\left\{\begin{array}{ll}
      \lambda(|t|-\tfrac{t^2}{2\lambda\tau}) & |t|<\tau\lambda\\[1.1ex]
       \tfrac{\lambda^2\tau}{2} &|t|\geq\tau\lambda \end{array}\right.$ &  $\left\{\begin{array}{ll}
             0 & |v| \leq \lambda\\[1.1ex]
             \sgn(v)\frac{\tau(|v| - \lambda)}{\tau -1} &\lambda \leq |v| \leq \lambda\tau \\[1.1ex]
             v & |v| \geq \lambda\tau\end{array}\right.$\\
    \hline
  \end{tabular}}}
\end{table}

The $\ell^0$-norm, denoted by $\|x\|_0$ of a vector $x$, is defined by $\|x\|_0 = |\{i: x_i\neq0\}|$. It
penalizes the number of nonzero components, which measures the model complexity (e.g., degree of freedom).
Due to the discrete nature of the $\ell^0$ penalty, the model \eqref{eqn:model} is combinatorial in nature and hardly
tractable in high-dimensional spaces (see, e.g., \cite{ChenGeWang:2017} for the NP hardness). All other penalties in Table
\ref{tab:nonconvex} can be regarded as approximations to the $\ell^0$ penalty, and are designed to alleviate
its drawbacks, e.g., lack of stability \cite{Breiman:1996} and computational challenges.

The bridge penalty was popularized by the works \cite{FrankFriedman:1993,Fu:1998}. The $\ell^\tau$-quasinorm
$\|x\|_\tau$, $0<\tau<1$, of a vector $x$, defined by $\|x\|_\tau^\tau=\sum_i|x_i|^\tau$, is a
quasi-smooth approximation of the $\ell^0$ penalty as $\tau$ tends towards zero \cite{ItoKunisch:2011},
and related statistical properties, e.g., variable selection and oracle property, have been intensively studied
\cite{KnightFu:2000,HuangHorowitzMa:2008,ChartrandStaneva:2008,FoucartLai:2009}.

SCAD \cite{FanLi:2001,FanPeng:2004} was suggested to circumvent the drawbacks of lasso. It was devised
based on the following qualitative requirements: the penalty is singular at the origin in order to
achieve sparsity and its derivative vanishes for large values so as to ensure unbiasedness. Specifically,
for SCAD, it is defined for $\tau>2$ via
\begin{equation*}
  \rho_{\lambda,\tau}(t) = \lambda\int_0^{|t|} \min\left(1,\frac{\max(0,\lambda\tau-|s|)}{\lambda(\tau-1)}\right){\rm d}s
\end{equation*}
and computing the integral explicitly yields the expression in Table \ref{tab:nonconvex}. Further,
variable selection consistency and asymptotic estimation efficiency
were studied in \cite{FanPeng:2004}.

The capped-$\ell^1$ penalty \cite{Zhang:2010b} is a linear approximation of the SCAD penalty.
Theoretically, it can be viewed as a variant of the two-stage optimization problem:
one first solves a regular lasso problem and then solves a lasso problem where the large
coefficients are not penalized any more, thus leading to an unbiased model. The
condition $\tau>1/2$ ensures the well-posedness of the thresholding operator \cite{ZhangZhang:2012}.

The {MCP was devised in the same spirit as SCAD.} It  is defined by \cite{Zhang:2010a}
\begin{equation*}
  \begin{aligned}
    \rho_{\lambda,\tau}(t) & = \lambda\int_0^{|t|}\max\left(0,1-|s|/(\lambda\tau)\right){\rm d}s.
  \end{aligned}
\end{equation*}
The MCP minimizes the maximum concavity $\sup_{0<t_1<t_2}
(\rho'_{\lambda,\tau}(t_1)-\rho'_{\lambda,\tau}(t_2))/(t_2-t_1)$ subject to
unbiasedness and feature selection constraints: $\rho'_{\lambda,\tau}(t)=0$ for any
$|t|\geq\lambda\tau$ and $\rho'_{\lambda,\tau}(0^\pm)=\pm\lambda$.
Similar to the capped-$\ell^1$ penalty, the condition $\tau >1$ ensures the
well-posedness of the thresholding operator \cite{Zhang:2010a}.

\subsection{Existence of global minimizers}
To put the algorithmic developments on a firm theoretical foundation, we first consider
the existence of a global minimizer to the nonconvex functional $J$ defined in problem \eqref{eqn:model}.
The standard argument in calculus of variation for proving existence relies on the lower semi-continuity
and coercivity of the objective function, and in the absence of these properties, it is nontrivial to
prove the existence. First, we note that the $\ell^0$ penalty is lower semi-continuous \cite{ItoKunisch:2011}
and the rests are continuous.  Hence, if the matrix $\Psi$ is of full column rank, i.e., $\|\Psi x\|
\rightarrow\infty$ as $\|x\|\rightarrow \infty$, then the existence of a global minimizer follows by the
standard argument. However, in the setting of $p > n$, which is of interest in sparse recovery, $\Psi$
does not have a full column rank, and the standard argument does not apply directly. Moreover, $\ell^0$, capped-$\ell^1$,
SCAD and MCP penalties do not satisfy the coercivity. Consequently, the existence of a global minimizer
of the nonconvex functional $J$ is not self evident.

\begin{proposition}\label{thm:Exist}
For any of the five nonconvex penalties in Table \ref{tab:nonconvex}, there exists at least
one global minimizer to problem \eqref{eqn:model}.
\end{proposition}

This seemingly  simple result requires a careful argument, where the challenge lies mainly in the
lack of the coercivity, as mentioned above. The complete proof is given in the supplementary materials. In passing, we note
that the existence issue in the case of the $\ell^0$ penalty was discussed in \cite{Nikolova:2013}.
However, to the best of our knowledge, the existence issue in a general setting has not been studied for SCAD,
capped-$\ell^1$ penalty and MCP before. Note that the global minimizer is generally not unique.

\section{Necessary optimality condition for minimizers}\label{sec:op}
Now we derive the necessary optimality condition for global minimizers to \eqref{eqn:model}, which also
forms the basis for deriving the PDAS algorithm in Section \ref{sec:alg}. We shall show that the solutions
to the necessary optimality condition are coordinate-wise minimizers, and provide verifiable sufficient
conditions for a coordinate-wise minimizer to be a local minimizer.

\subsection{Thresholding operators}\label{ssec:thresholding}
First we derive thresholding operators for the penalties in Table \ref{tab:nonconvex}. The thresholding
operator forms the basis of many existing algorithms, e.g., coordinate descent and iterative thresholding,
and thus unsurprisingly the expressions in Table \ref{tab:nonconvex} were derived earlier (see e.g.
\cite{She:2009,MazumderFriedmanHastie:2011,BrehenyHuang:2011,ItoKunisch:2011,GongZhangLuHuangYe:2013}),
but in different manners. Below we shall provide a unified derivation and a useful characteristic of the
thresholding operator. To this end, for any penalty $\rho(t)$ in Table \ref{tab:nonconvex} (the subscripts
$\lambda$ and $\tau$ are omitted for simplicity), we define a function $g(t): [0,\infty)\to \mathbb{R}^+\cup\{0\}$ by
\begin{equation*}
g(t) = \left\{\begin{array}{ll}\frac{t}{2} + \frac{\rho(t)}{t}, & t \neq 0,\\[1.0ex]
   \liminf\limits_{t\rightarrow 0^+} g(t), & t=0. \end{array} \right.
\end{equation*}
\begin{lemma}\label{lem:gt}
The value $T^\ast = \inf_{t > 0}g(t)$ is attained at some point $t^*\geq 0$.
\end{lemma}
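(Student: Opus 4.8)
The plan is to treat the infimum as a one–dimensional minimization over $(0,\infty)$ and to rely on the coercivity and continuity of $g$, letting the value $g(0)$, defined through the $\liminf$, absorb the boundary behavior as $t\to 0^+$. First I would record two elementary facts that hold uniformly for every penalty in Table~\ref{tab:nonconvex}. Since $\rho(t)\ge 0$, for $t>0$ we have $g(t)=\tfrac{t}{2}+\tfrac{\rho(t)}{t}\ge \tfrac{t}{2}\ge 0$, so $T^\ast=\inf_{t>0}g(t)$ is finite and nonnegative, and moreover $g(t)\ge \tfrac{t}{2}\to\infty$ as $t\to\infty$. Consequently there is an $R>0$ with $g(t)>T^\ast+1$ for all $t>R$, which confines every minimizing sequence $\{t_k\}\subset(0,\infty)$ to the bounded interval $(0,R]$.

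Next I would invoke continuity of $g$ on the open half-line. For each penalty, $\rho$ is continuous on $(0,\infty)$ — for the $\ell^0$ penalty $\rho\equiv\lambda$ there, for the bridge $\rho(t)=\lambda t^\tau$, and for the capped-$\ell^1$, SCAD and MCP a short check at the breakpoints shows the piecewise formulas agree — so that $g(t)=\tfrac{t}{2}+\rho(t)/t$ is continuous on $(0,\infty)$. Passing to a convergent subsequence of the bounded minimizing sequence, I obtain $t_k\to t^\ast$ with $t^\ast\in[0,R]$.

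Then I would split into two cases. If $t^\ast>0$, continuity gives $g(t^\ast)=\lim_k g(t_k)=T^\ast$, so the infimum is attained at the interior point $t^\ast$. If $t^\ast=0$, the minimizing sequence tends to the origin, so on one hand $\liminf_{t\to 0^+}g(t)\le \lim_k g(t_k)=T^\ast$, while on the other hand $\liminf_{t\to 0^+}g(t)=\sup_{\delta>0}\inf_{0<t<\delta}g(t)\ge \inf_{t>0}g(t)=T^\ast$; hence $g(0)=\liminf_{t\to 0^+}g(t)=T^\ast$ and the infimum is attained at $t^\ast=0$ in the extended sense of the definition.

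The only delicate point is this boundary case $t^\ast=0$, and it is precisely what the $\liminf$ definition of $g(0)$ is designed to handle: for the Lasso, capped-$\ell^1$, SCAD and MCP one has $g(t)\to\lambda$ as $t\to 0^+$ and the infimum may genuinely be approached, rather than attained in the interior, at the origin, whereas for the $\ell^0$ and bridge penalties $g(t)\to\infty$ as $t\to 0^+$, which forces $t^\ast>0$ so that no statement about $g(0)$ is needed. I therefore expect no substantive obstacle beyond the routine per-penalty verification of continuity at the breakpoints.
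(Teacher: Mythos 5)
Your proposal is correct and follows essentially the same route as the paper's proof: coercivity of $g$ bounds any minimizing sequence, continuity on $(0,\infty)$ handles a positive accumulation point, and the $\liminf$ definition of $g(0)$ handles the boundary case $t^\ast=0$. You merely fill in the details (the bound $g(t)\ge t/2$ and the two-sided inequality showing $g(0)=T^\ast$) that the paper leaves implicit.
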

\begin{proof}
By the definition of the function $g(t)$, it is continuous over the interval $(0,+\infty)$,
and approaches infinity as $t\to +\infty$. Hence any minimizing sequence $\{t_n\}$
is bounded. If the sequence contains a positive accumulation point $t^*$, then $g(t^*) = T^*$
by the continuity of $g$. Otherwise it has only an accumulation point $0$. However,
by the definition of $g(0)$, $g(0) = T^\ast$ and hence $t^\ast = 0$.
\end{proof}

The explicit expressions of the tuple $(t^*,T^*)$ for the penalties in Table \ref{tab:nonconvex}
are given below; see Appendix \ref{app:tT} in the supplementary materials for the proof.
\begin{lemma}\label{thm:tT} For the five nonconvex penalties in Table \ref{tab:nonconvex}, there holds
\begin{equation*}{\small
  (t^*,T^*) = \left\{\begin{array}{ll}
    (\sqrt{2\lambda},\sqrt{2\lambda}), & \ell^0,\\
    ((2\lambda(1-\tau))^{\frac{1}{2-\tau}},(2-\tau)\left[2(1-\tau)\right]^{\frac{\tau-1}{2-\tau}}
     \lambda^{\frac{1}{2-\tau}}), & \ell^\tau,\\
    (0,\lambda), & \mathrm{capped}\mbox{-}\ell^1,\ \mathrm{SCAD},\ \mathrm{MCP}.
  \end{array}\right.}
\end{equation*}
\end{lemma}

Next we introduce the thresholding operator $S^\rho$ defined by
\begin{equation}\label{eqn:thresholding}
  S^\rho(v) =  \mathop\textrm{argmin}_{u\in \mathbb{R}}\left({(u-v)^2}/{2} + \rho(u)\right),
\end{equation}
which can potentially be set-valued. First we give a useful characterization of $S^\rho$
based on $(t^*, T^*)$.

\begin{lemma}\label{lem:proximal}
Let
$u^\ast \in \mathop\mathrm{arg}\min_{u\in \mathbb{R}}\left({(u-v)^2}/{2} + \rho(u)\right).$
Then the following three statements hold:
  $(\mathrm{a})$ $u^\ast=0 \Rightarrow |v|\leq T^{*}$;
  $(\mathrm{b})$ $|v|<T^{*} \Rightarrow u^\ast=0$;
  and $(\mathrm{c})$ $|v|=T^{*} \Rightarrow u^\ast=0$  or  $g(u^\ast)=\sgn(v) T^{*}$.
\end{lemma}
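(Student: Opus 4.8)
The plan is to reduce everything to a single elementary identity relating the objective in \eqref{eqn:thresholding} to the function $g$. Writing $\phi(u) = (u-v)^2/2 + \rho(u)$, I would first exploit the evenness of $\rho$ to restrict attention to $v\geq 0$: since $\rho(-u)=\rho(u)$, we have $\phi_{-v}(-u)=\phi_v(u)$, so the minimizers for $v$ and for $-v$ are related by a sign flip, and it suffices to treat $v\ge 0$ and then transfer the conclusion using that $g$ is odd (which follows at once from $g(-t)=-t/2+\rho(t)/(-t)=-g(t)$ when $\rho$ is even). For $v>0$ I would also observe that every minimizer satisfies $u^\ast\ge 0$: for $u<0$ one has $(u-v)^2>(-u-v)^2$ while $\rho(u)=\rho(-u)$, so $\phi(-u)<\phi(u)$, ruling out negative minimizers. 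This lets me work entirely on $[0,\infty)$.

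The crux is the identity, valid for $u>0$,
\[
  \phi(u)-\phi(0) = u\bigl(g(u)-v\bigr),
\]
which I obtain by expanding $\phi(u)=u^2/2 - uv + v^2/2 + \rho(u)$ and using $u\,g(u)=u^2/2+\rho(u)$ together with $\phi(0)=v^2/2$ (recall $\rho(0)=0$). Once this is in hand, all three statements become immediate comparisons. For (a), if $u^\ast=0$ minimizes $\phi$, then $\phi(0)\le\phi(u)$ for every $u>0$ forces $g(u)\ge v$ for all $u>0$, hence $v\le\inf_{u>0}g(u)=T^\ast$; with the symmetry this gives $|v|\le T^\ast$. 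For (b), if $0\le v<T^\ast$, then for every $u>0$ we have $g(u)\ge T^\ast>v$, so $\phi(u)-\phi(0)=u(g(u)-v)>0$, showing $0$ is the unique minimizer. For (c), with $v=T^\ast>0$, any minimizer $u^\ast>0$ satisfies $\phi(u^\ast)\le\phi(0)$, i.e. $g(u^\ast)\le v=T^\ast$, while $g(u^\ast)\ge T^\ast$ always; hence $g(u^\ast)=T^\ast=\sgn(v)T^\ast$, and otherwise $u^\ast=0$.

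I do not expect a serious obstacle; the work is essentially bookkeeping once the identity is isolated. The points that require care are the sign reduction, where I must verify that transferring the $v>0$ conclusions to $v<0$ correctly produces the factor $\sgn(v)$ in (c)---this is exactly where the oddness of $g$ enters, since $g(u^\ast)=-g(-u^\ast)=-T^\ast$ when $u^\ast<0$---and the degenerate boundary value $v=0$ (equivalently $T^\ast=0$), which must be handled separately but follows from the same identity with the inequalities read trivially (here both signs of $u^\ast$ reappear, but $\phi(u)=\phi(|u|)$ keeps $0$ the unique minimizer in (b)). I would also note that, unlike Lemma \ref{lem:gt}, attainment of the infimum $T^\ast$ is never invoked here, since every comparison only uses $g(u)\ge T^\ast$ for $u>0$, which is the definition of $T^\ast$, together with $g>0$ on $(0,\infty)$ to ensure $T^\ast\ge 0$.
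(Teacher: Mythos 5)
Your proof is correct and follows essentially the same route as the paper: both arguments rest on the identity $\phi(u)-\phi(0)=u\bigl(g(u)-v\bigr)$ (the paper writes it as $u^2/2-uv+\rho(u)=u(g(u)-v)$ after discarding the constant) and then compare $v$ with $T^\ast=\inf_{u>0}g(u)$, using the oddness of $g$ to handle the two signs. The only difference is organizational—you reduce to $v\ge 0$ up front, while the paper treats $u>0$ and $u<0$ directly—and your observation that attainment of $T^\ast$ is never needed is consistent with the paper's proof.
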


If the minimizer $t^*$ to $g(t)$ is unique, then assertion (c) of Lemma \ref{lem:proximal}
can be replaced by $|v|=T^{*} \Rightarrow u^\ast=0$  or  $u^\ast=\mathrm{sgn}(v) t^{*}$.

Now we can derive an explicit expression for the thresholding operator $S^\rho$,
which is summarized in Table \ref{tab:nonconvex} and given by Proposition \ref{thm:thresholding}
below. The proof is elementary but lengthy, and thus deferred to Appendix \ref{app:thresholding}.
\begin{proposition}\label{thm:thresholding}
The thresholding operators $S^\rho$ associated with the five nonconvex penalties {\rm(}$\ell^0$, bridge,
capped-$\ell^1$, SCAD and MCP{\rm)} are as given in Table \ref{tab:nonconvex}.
\end{proposition}

Note that the thresholding operator $S^\rho$ is singled-valued, except at $v = T^*$ for
the $\ell^\tau$, $0\leq \tau <1$, penalty, and at $v = \lambda(\tau + \frac{1}{2})$ for
the capped-$\ell^1$ penalty.
\subsection{Necessary optimality condition}

Now we derive the necessary optimality condition for a global minimizer to \eqref{eqn:model}
using the thresholding operator $S^\rho$. To this end, we first
recall the concept of coordinate-wise minimizers. Following \cite{Tseng:2001}, a vector $x^*=(x_1^*,x_2^*,
\dots,x_p^*)\in\mathbb{R}^p$ is called a coordinate-wise minimizer of the functional $J(x)$ if it is the
minimum along each coordinate direction, i.e.,
\begin{equation}
x_i^* \in \mathop\mathrm{arg}\min\limits_{t\in\mathbb{R}}  J(x_1^*,...,x_{i-1}^*,t,x_{i+1}^*,...,x_p^*).
\end{equation}

Next we derive the sufficient and necessary optimality condition for a coordinate-wise minimizer $x^*$ of problem
\eqref{eqn:model}. By the definition of $x^*$, there holds
\begin{equation*}
{\small
  \begin{aligned}
    &x_i^* \in \mathop\textrm{argmin}\limits_{t\in \mathbb{R}} J(x_1^*,...,x_{i-1}^*,t,x_{i+1}^*,...,x_p^*)\\
    \Leftrightarrow\quad & x_i^* \in \mathop\textrm{argmin}\limits_{t\in \mathbb{R}} \tfrac{1}{2}\|\Psi x^*-y+(t-x_i^*)\psi_i\|^2+\rho_{\lambda,\tau}(t) \\
    \Leftrightarrow\quad & x_i^* \in \mathop\textrm{argmin}\limits_{t\in \mathbb{R}} \tfrac{1}{2}(t-x_i^*)^2+(t-x_i^*)\psi_i^t(\Psi x^*-y) + \rho_{\lambda,\tau}(t) \\
   \Leftrightarrow\quad & x_i^* \in \mathop\textrm{argmin}\limits_{t\in \mathbb{R}} \tfrac{1}{2}(t-x_i^*-\psi_i^t(y-\Psi x^*))^2 + \rho_{\lambda,\tau}(t).
  \end{aligned}}
\end{equation*}
By introducing the dual variable $d_i^* = \psi_i^t ( y -\Psi x^*)$ and recalling
the definition of the thresholding operator $S_{\lambda,\tau}^\rho(t)$ for $\rho_{\lambda,\tau}$,
we have the following characterization of $x^*$,
which clearly is also a necessary optimality condition of a global minimizer.
\begin{lemma}\label{lem:necopt}
An element $x^*\in\mathbb{R}^p$ is a coordinate-wise minimizer to problem \eqref{eqn:model} if and only if
\begin{equation}\label{eqn:nec}
   x_i^* \in S_{\lambda,\tau}^\rho (x_i^* + d_i^*) \quad \mbox{ for } i=1,...,p,
\end{equation}
where the dual variable $d^*$ is defined by $d^* = \Psi^t(y-\Psi x^*)$.
\end{lemma}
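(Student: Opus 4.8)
The plan is to exploit the fact that coordinate-wise minimization separates into $p$ independent scalar problems, one for each coordinate direction, and that each such scalar problem is precisely of the proximal form defining the thresholding operator $S^\rho$ in \eqref{eqn:thresholding}. Since this reduces the vector statement to $p$ identical scalar statements, it suffices to fix a single index $i$ and show that the $i$-th coordinate-wise optimality is equivalent to $x_i^* \in S^\rho_{\lambda,\tau}(x_i^* + d_i^*)$.

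First I would fix $i$ and regard $J$ as a function of $t$ alone, expanding the squared norm $\|\Psi x^* - y + (t - x_i^*)\psi_i\|^2$. Here I would invoke the standing normalization $\|\psi_i\| = 1$ to see that the coefficient of $(t - x_i^*)^2$ is exactly $\tfrac{1}{2}$; this is the one place where the assumption is essential, since otherwise the quadratic would carry a factor $\|\psi_i\|^2$ and the resulting operator would be a rescaled thresholding map rather than $S^\rho$ itself. Discarding the term $\tfrac{1}{2}\|\Psi x^* - y\|^2$, which does not depend on $t$, leaves the scalar objective $\tfrac{1}{2}(t - x_i^*)^2 + (t - x_i^*)\psi_i^t(\Psi x^* - y) + \rho_{\lambda,\tau}(t)$.

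Next I would complete the square in $t$. Grouping the quadratic and linear terms gives $\tfrac{1}{2}\bigl(t - x_i^* + \psi_i^t(\Psi x^* - y)\bigr)^2$ up to an additive constant independent of $t$, and recognizing $\psi_i^t(y - \Psi x^*)$ as the $i$-th component $d_i^*$ of the dual variable $d^* = \Psi^t(y - \Psi x^*)$, the shifted center becomes $x_i^* + d_i^*$. Minimizing over $t$ is therefore the same as minimizing $\tfrac{1}{2}(t - (x_i^* + d_i^*))^2 + \rho_{\lambda,\tau}(t)$, whose set of minimizers is $S^\rho_{\lambda,\tau}(x_i^* + d_i^*)$ by definition \eqref{eqn:thresholding}, and the characterization \eqref{eqn:nec} follows.

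The \emph{if and only if} carries no extra burden here: every manipulation above alters the scalar objective only by a constant that is independent of the minimization variable $t$, so each step preserves the full argmin \emph{set} (not merely the optimal value), which matters because $S^\rho$ may be set-valued by Remark \ref{rem:1}. Consequently the chain of reductions consists of genuine equivalences, and no genuinely hard step remains; the only points demanding care are the use of $\|\psi_i\| = 1$ and the bookkeeping of signs when passing from $\psi_i^t(\Psi x^* - y)$ to $d_i^* = \psi_i^t(y - \Psi x^*)$.
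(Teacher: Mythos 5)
Your argument is correct and follows exactly the paper's own derivation: expand $\tfrac{1}{2}\|\Psi x^* - y + (t-x_i^*)\psi_i\|^2$ using $\|\psi_i\|=1$, complete the square to obtain $\tfrac{1}{2}(t - x_i^* - d_i^*)^2 + \rho_{\lambda,\tau}(t)$, and invoke the definition \eqref{eqn:thresholding} of $S^\rho_{\lambda,\tau}$. Your additional remark that each step only shifts the objective by a $t$-independent constant, and hence preserves the full argmin set, is a sound (and welcome) justification of the equivalence chain that the paper leaves implicit.
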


\begin{remark}{
In the same manner, we can derive the well-known necessary and sufficient KKT condition for lasso \cite{Combettes:2005}.}
\end{remark}

Using the expression of the thresholding operators in Table \ref{tab:nonconvex} and
the remark following Proposition \ref{thm:thresholding},
only in the case of $|x_i^* + d_i^*| = T^*$ for the bridge and $\ell^0$ penalties, and $|x_i^*
+ d_i^*| = \lambda(\tau + \frac{1}{2})$ for the capped-$\ell^1$ penalty, the value of the entry
$x_i^*$ is not uniquely determined.

The necessary optimality condition \eqref{eqn:nec} forms the basis of the PDAS algorithm in
Section \ref{sec:alg}. Hence, the ``optimal solution'' by the algorithm can at best solve
the necessary condition, and it is important to study more precisely the meaning of ``optimality''.
First we recall a well-known result. By \cite[Lemma 3.1]{Tseng:2001}, a coordinate-wise minimizer
$x^*$ is a stationary point in the following sense
\begin{equation}
   \liminf\limits_{t\rightarrow 0^+} \frac{J(x^*+td) - J(x^*)}{t} \geq 0, \quad \forall d\in\mathbb{R}^p.
\end{equation}

In general, a coordinate-wise minimizer $x^*$ is not necessarily a local minimizer, i.e.,
$J(x^*+\omega) \geq J(x^*)$ for all small $\omega\in\mathbb{R}^p$. Below
we provide sufficient conditions for a coordinate-wise minimizer to be a local minimizer.
To this end, we denote by $\calA=\{i:\ x_i^*\neq0\}$ and $\calI=\calA^c$ the active and
inactive sets, respectively, of a coordinate-wise minimizer $x^*$. Throughout, for any subset $\mathcal{A}
\subset \mathbb{I} = \{1,2,...,p\}$, we use the notation $x_\mathcal{A}\in\mathbb{R}^{|\mathcal{A}|}$
(or $\Psi_\mathcal{A}\in\mathbb{R}^{n\times |\mathcal{A}|}$) for the subvector of $x$ (or the submatrix of $\Psi$)
consisting of entries (or columns) whose indices are listed in $\mathcal{A}$.

For any $\calA\subset \mathbb{I}$, let $\sigma(\calA)$ be the smallest singular value of matrix $\Psi_\calA^t\Psi_\calA$.
Then
\begin{equation}\label{eqn:sec}
  \|\Psi_\calA x_\calA\|^2 \geq \sigma(\calA)\|x_\calA\|^2.
\end{equation}
Intuitively, the condition \label{equ:sec} ensures that the smooth convex term dominates the
nonsmooth nonconvex term so that the coordinatewise minimizer $x^*$ has good property.
The sufficient conditions for a coordinatewise minimizer to be a local minimizer are summarized in Theorem \ref{thm:localmin} below. The proof
is lengthy and technical, and hence deferred to Appendix \ref{app:localmin}. Under
the prescribed conditions, the solution generated by the PDAS algorithm, if it does
converge, is a local minimizer.
\begin{theorem}\label{thm:localmin} Let $x^*$ be a coordinate-wise minimizer to \eqref{eqn:model}, and $\calA=
\{i: x_i^*\neq0\}$ and $\calI=\calA^c$ be the active and inactive sets, respectively. Then there hold:
  \begin{itemize}
    \item[$(\mathrm{i})$] $\ell^0$: $x^*$ is a local minimizer.
    \item[$(\mathrm{ii})$] bridge: If $\sigma(\mathcal{A})
     >\frac{\tau}{2}$ in \eqref{eqn:sec}, then $x^*$ is a local minimizer.
    \item[$(\mathrm{iii})$] capped-$\ell^1$: If $\{i: \ |x_i^*|=\lambda\tau\}=\emptyset$, then $x^*$ is a local minimizer.
    \item[$(\mathrm{iv})$] SCAD: If $\sigma(\mathcal{A})
      > \frac{1}{\tau-1}$ in \eqref{eqn:sec} and $\|d^*_\mathcal{I}\|_{\infty} < \lambda$, then $x^*$ is a local minimizer.
    \item[$(\mathrm{v})$] MCP: If $\sigma(\mathcal{A})
      > \frac{1}{\tau}$ in \eqref{eqn:sec} and $\|d^*_\mathcal{I}\|_{\infty}< \lambda$, then $x^*$ is a local minimizer.
  \end{itemize}
\end{theorem}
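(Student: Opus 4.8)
The plan is to estimate $J(x^*+\omega)-J(x^*)$ directly for all small perturbations $\omega\in\mathbb{R}^p$ and to show it is nonnegative under the stated hypotheses. Expanding the quadratic data-fidelity term and using $d^*=\Psi^t(y-\Psi x^*)$, one obtains
\begin{equation*}
  J(x^*+\omega)-J(x^*) = -\sum_{i=1}^p\omega_i d_i^* + \tfrac12\|\Psi\omega\|^2 + \sum_{i=1}^p\big(\rho(x_i^*+\omega_i)-\rho(x_i^*)\big),
\end{equation*}
where $\rho$ abbreviates $\rho_{\lambda,\tau}$. First I would split the index set into $\calA$ and $\calI$. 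On $\calA$ the penalty is differentiable at $x_i^*\neq0$ (each of the five penalties is $C^1$ away from the origin, and away from the capped-$\ell^1$ kink at $\lambda\tau$), so the coordinate-wise condition \eqref{eqn:nec}, which for $x_i^*\neq0$ reduces to stationarity of $\tfrac12(t-x_i^*-d_i^*)^2+\rho(t)$ at $t=x_i^*$, yields the first-order relation $d_i^*=\rho'(x_i^*)$. Substituting this, the active contribution collapses to a pure second-order remainder $\sum_{i\in\calA}\big(\rho(x_i^*+\omega_i)-\rho(x_i^*)-\rho'(x_i^*)\omega_i\big)=\sum_{i\in\calA}\tfrac12\rho''(\xi_i)\omega_i^2$, while the inactive contribution is $\sum_{i\in\calI}\big(\rho(\omega_i)-\omega_i d_i^*\big)$ since $x_i^*=0$.

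This produces three competing pieces, and the proof reduces to showing that the nonnegative quadratic $\tfrac12\|\Psi\omega\|^2$ together with the inactive term dominates the possibly negative active remainder. For the concave penalties (bridge, SCAD, MCP) I would bound the active remainder below by $\tfrac12(\inf\rho'')\,\|\omega_\calA\|^2$ and combine it with $\tfrac12\|\Psi_\calA\omega_\calA\|^2\geq\tfrac12\sigma(\calA)\|\omega_\calA\|^2$ from \eqref{eqn:sec}, so that positivity follows from $\sigma(\calA)+\inf\rho''>0$. A direct computation of $\rho''$ on the active regions gives $\inf\rho''=-\tau/2$ for the bridge (attained at the smallest admissible value $|t|=t^*=(2\lambda(1-\tau))^{1/(2-\tau)}$ from Theorem \ref{thm:tT}), $\inf\rho''=-1/(\tau-1)$ for SCAD on $\lambda<|t|\le\lambda\tau$, and $\inf\rho''=-1/\tau$ for MCP on $|t|<\lambda\tau$, which match exactly the thresholds in (ii), (iv), (v). For the inactive term I would use the local shape of $\rho$ at the origin: for SCAD and MCP, $\rho(\omega_i)\ge\lambda|\omega_i|-C\omega_i^2$, so under $\|d^*_\calI\|_\infty<\lambda$ it is bounded below by $(\lambda-\|d^*_\calI\|_\infty)\|\omega_\calI\|_1$ minus a harmless quadratic; for the bridge, $\rho(\omega_i)=\lambda|\omega_i|^\tau$ dominates the linear term $\omega_i d_i^*$ automatically for small $\omega_i$, giving a super-linear positive lower bound with no dual condition required.

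The two remaining penalties are easier and need no singular-value condition. For $\ell^0$ the active contribution vanishes identically for small $\omega$ (since $\rho$ is locally constant and $d_i^*=0$ on $\calA$), and for $\omega_i\neq0$ on $\calI$ the jump $\rho(\omega_i)=\lambda$ strictly dominates $\omega_i d_i^*$ (recall $|d_i^*|\le\sqrt{2\lambda}$ on $\calI$), so the difference is nonnegative unconditionally. For capped-$\ell^1$, excluding active coordinates with $|x_i^*|=\lambda\tau$ forces every active $x_i^*$ into a region where $\rho$ is locally affine, so the active remainder is exactly zero for small $\omega$; the inactive term is nonnegative because $|d_i^*|\le\lambda$ on $\calI$ and $\rho(\omega_i)=\lambda|\omega_i|$ near the origin, and together with $\tfrac12\|\Psi\omega\|^2\ge0$ this gives the claim. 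The role of the excluded set $\{|x_i^*|=\lambda\tau\}$ is precisely to avoid a concave kink that would render the active remainder negative.

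The main obstacle will be making rigorous, uniformly for all sufficiently small $\omega$, the claim that the positive definite quadratic in $\omega_\calA$ together with the positive (super)linear term in $\omega_\calI$ absorbs the indefinite cross term $\langle\Psi_\calA\omega_\calA,\Psi_\calI\omega_\calI\rangle$ and the negative quadratic-in-$\omega_\calI$ corrections. I would split the cross term by Young's inequality, $|\langle\Psi_\calA\omega_\calA,\Psi_\calI\omega_\calI\rangle|\le\tfrac{\delta}{2}\|\omega_\calA\|^2+\tfrac{C}{\delta}\|\omega_\calI\|^2$, so that all leftover negative terms are $O(\|\omega_\calI\|^2)$, and then invoke the elementary fact that a strictly positive linear term $c_0\|\omega_\calI\|$ (or super-linear $\sum_{i\in\calI}|\omega_i|^\tau$ for the bridge) dominates any $O(\|\omega_\calI\|^2)$ quantity once $\|\omega\|$ is small. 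One must also verify that each active coordinate stays in the same smooth branch of $\rho$ under the perturbation, which is where the strictness of the inequalities $\sigma(\calA)>\cdots$ and the separation of $x_i^*$ from the singular points of $\rho$ enter.
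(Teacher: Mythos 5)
Your proposal follows essentially the same route as the paper's proof in Appendix C: the same expansion $J(x^*+\omega)-J(x^*)=\tfrac12\|\Psi\omega\|^2+\sum_i\bigl(\rho(x_i^*+\omega_i)-\rho(x_i^*)-\omega_i d_i^*\bigr)$, the same active/inactive split with $d_i^*=\rho'(x_i^*)$ on $\calA$, the same curvature lower bounds $-\tau/2$, $-1/(\tau-1)$, $-1/\tau$ matched against $\sigma(\calA)$ via Young's inequality on the cross term, and the same (super)linear domination argument on $\calI$. The argument is correct and the constants agree with the paper's.
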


This theorem shows that, for the $\ell^0$ penalty, a coordinate-wise minimizer
is always a local minimizer.
For the capped-$\ell^1$ penalty, the sufficient condition $\{i: \ |x_i^*|=\tau
\lambda\}=\emptyset$ is related to the nondifferentiability of $\rho_{\lambda,\tau}^{c\ell^1}(t)$
at $t=\tau\lambda$. For the bridge, SCAD and MCP, the condition \eqref{eqn:sec} is essential
for a coordinate-wise minimizer to be a local minimizer, which requires that the size of the
active set be not large. The condition $\|d^*_\mathcal{I}\|_{\infty} < \lambda$
is closely related to the uniqueness of the global minimizer. 
If both $\Psi$ and $\eta$ are random Gaussian, it holds except a null
measure set \cite{Zhang:2010a}.

The conditions in Theorem \ref{thm:localmin} involve only the computed solution and the parameters
$\lambda$ and $\tau$, and are numerically verifiable, which in principle enables one to check
\textit{a posteriori} whether a coordinatewise minimizer is a local one.

\begin{remark}
{For MCP and SCAD, we can prove that a local minimizer is also a coordinatewise minimizer,
in view of the convexity of the one-dimensional minimization problem \eqref{eqn:thresholding}. Generally, the
regularity condition $\sigma(\mathcal{A})$ being bounded away from $0$ in \eqref{eqn:sec} cannot be removed,
in order to ensure the coordinatewise minimizer to be a local minimizer;
See Appendix \ref{counterexample} for a counterexample.}
\end{remark}

\section{Primal dual active set algorithm}\label{sec:alg}
In this section, we propose an algorithm of PDAS type for the nonconvex penalties
listed in Table \ref{tab:nonconvex}, discuss its efficient implementation via a
continuation strategy, and analyze its global convergence.

\subsection{Brief review on semismooth Newton method and PDAS algorithm}

First, we briefly review semismooth Newton methods and primal dual active set algorithm,
following the monograph \cite{ItoKunisch:2008}. Let $X$ and $Z$ be Banach spaces and consider
the following nonlinear equation
\begin{equation}\label{eqn:nonlin}
 F(x) = 0 ,
\end{equation}
where $F: D \subset X \to Z$, and $D$ is an open subset of $X$. The semismooth Newton
method builds on the concept of a generalized derivative known as Newton derivative.
The notation $\mathcal{L}(X,Z)$ denotes the space of bounded linear operators from $X$ to $Z$.
\begin{definition}{\cite{Kummer:1988,HintermullerItoKunisch:2003}}
The mapping $F : D\subset X \to Z$ is called Newton differentiable in
the open subset $U \subset D$ if there exists a family of mappings
$G: U \to \mathcal{L}(X,Z)$ such that
\begin{equation*}
\lim_{\|h\|\to 0}\frac{\|F(x+h)-F(x)-G(x+h)h\|}{\|h\|} = 0,\quad \forall x\in U.
\end{equation*}
The mapping $G$ is called a Newton derivative for $F$ in $U$.
\end{definition}

Note that $G$ is not required to be unique to be a Newton derivative for $F$ in $U$. Under the assumption
of Newton differentiability in an open set, Newton's method converges superlinearly
for appropriate choices of the initialization; see the following convergence result \cite{ChenNashedQi:2000}.
\begin{proposition}
Suppose that $x^*$ is a solution to \eqref{eqn:nonlin} and that $F$ is Newton
differentiable in an open neighborhood $U$ containing $x^*$ with
Newton derivative $G(x)$. If $G(x)$ is nonsingular for all $x\in U$ and
$\{\|G(x)^{-1}\| : x\in U\}$ is bounded, then the Newton iteration
\begin{equation}\label{eqn:Newton}
 x^{k+1} = x^k - G(x^k)^{-1}F(x^k)
\end{equation}
converges superlinearly to $x^*$, provided that $\|x^0 - x^*\|$ is sufficiently small.
\end{proposition}

It is well known within the optimal control community that many PDAS type methods can be interpreted as
a semismooth Newton method, upon choosing a proper Newton derivative \cite{HintermullerItoKunisch:2003}.
Thus, PDAS algorithms merit fast local convergence. We illustrate the equivalence with the lasso
problem \eqref{reglasso}, which was developed in several works \cite{GriesseLorenz:2008,FanJiaoLu:2014,
HuangJiaoLuYang:2016,LiSunToh:2016}. Recall that the KKT system of the lasso problem \eqref{reglasso}
is given by  $\Psi^t\Psi x + d  = \Psi^ty$ and $x = S_\lambda(x + d)$ \cite{Combettes:2005}, where
$S_\lambda$ is the soft thresholding operator for the $\ell^1$ penalty. Then we introduce a nonlinear
operator $F:\mathbb{R}^{2p}\to\mathbb{R}^{2p}$ by
\begin{equation*}
  F(x,d) = \left[\begin{array}{c}
     \Psi^t\Psi x + d - \Psi^ty\\
     x - S_\lambda(x + d)
   \end{array}\right].
\end{equation*}
It can be verified that the thresholding operator $S_\lambda$ is Newton differentiable \cite{GriesseLorenz:2008},
and one Newton derivative operator $G(x,d)$ of the operator $F$ is given by
\begin{equation*}
  G(x,d) = \left[\begin{array}{cc}
    \Psi^t\Psi & I\\
    I_\mathcal{I} & - I_\mathcal{A},
  \end{array} \right],
\end{equation*}
with the active set $\mathcal{A}=\{i: |x_i+d_i|>\lambda\}$ and inactive set $\mathcal{I}=\mathcal{A}^c$.
Then, upon introducing the notation  $\calA_{k+1}^+=\{i: x_i^k+d_i^k>\lambda\}$, $\calA_{k+1}^-=\{i:x_i^k
+d_i^k<-\lambda\}$, $\calA_{k+1}=\calA_{k+1}^+\cup \calA_{k+1}^-$, and $\calI_{k+1}=\calA_{k+1}^c$,
the Newton update \eqref{eqn:Newton} is given by
\begin{align*}
  \left[\begin{array}{c}
    x^{k+1}\\ d^{k+1}\end{array}\right] & = \left[\begin{array}{c}x^k\\ d^k\end{array}\right] - G(x^k,d^k)^{-1}F(x^k,d^k),
\end{align*}
which, upon multiplying both sides by $G(x^k,d^k)$, can be recast into
\begin{align}
\Psi^t \Psi x^{k+1} + d^{k+1}  & =  0, \label{eqn:newton1}\\
I_{\mathcal{I}_{k+1}} (x^{k+1} - x^k) - I_{\mathcal{A}_{k+1}}(d^{k+1} - d^k) & = S_\lambda(x^k + d^k) - x^k. \label{eqn:newton2}
\end{align}
Meanwhile, by the definition of the soft-thresholding operator $S_\lambda$, we have
\begin{equation*}
S_\lambda(x_i^k + d_i^k) = \left\{\begin{array}{cc}
x^k_i + d^k_i -\lambda & i\in \calA_{k+1}^+, \\
0  & i\in \calI_{k+1},  \\
x^k_i + d^k_i +\lambda & i\in \calA_{k+1}^-. \\
\end{array}
\right.
\end{equation*}
Then equation \eqref{eqn:newton2} simplifies to
\begin{align*}
  x_{\mathcal{I}_{k+1}}^{k+1} &= \mathbf{0}_{\mathcal{I}_{k+1}}\quad\mbox{and}\quad
  d_{\calA_{k+1}}^{k+1} = \lambda[\mathbf{1}_{\calA_{k+1}^+}^t,-\mathbf{1}^t_{\calA_{k+1}^-}]^t.
\end{align*}
Upon substituting these identities into equation \eqref{eqn:newton1}, the semismooth Newton method gives rises to the following PDAS iteration:
\begin{align*}
  x_{\mathcal{I}_{k+1}}^{k+1} &= \mathbf{0}_{\mathcal{I}_{k+1}},\\
  d_{\calA_{k+1}}^{k+1} & = \lambda[\mathbf{1}_{\calA_{k+1}^+}^t,-\mathbf{1}^t_{\calA_{k+1}^-}]^t,\\
  \Psi^t_{\calA_{k+1}}\Psi_{\calA_{k+1}} x_{\calA_{k+1}}^{k+1} & = \Psi_{\calA_{k+1}}^ty - d_{\calA_{k+1}}^{k+1},\\
  d_{\calI_{k+1}}^{k+1} & =\Psi_{\calI_{k+1}}^ty - \Psi_{\calI_{k+1}}^t\Psi_{\calA_{k+1}}x_{\calA_{k+1}}^{k+1}.
\end{align*}

Thus, for the lasso problem \eqref{reglasso}, the semismooth Newton method can be
reformulated into a primal-dual active set (PDAS) algorithm. Due to the local
superlinear convergence of the semismooth Newton method, it is very efficient,
especially when coupled with a continuation strategy \cite{FanJiaoLu:2014}.
Actually it merits one-step convergence under suitable conditions. This
section presents a unified framework for developing PDAS type methods for
nonconvex sparse recovery based on the model \eqref{eqn:model}, which maintains
the excellent local convergence property.

\subsection{PDAS algorithm for nonconvex sparse recovery}

There are two key ingredients in constructing a PDAS algorithm:
\begin{itemize}
\item[(i)] to characterize the active set $\mathcal{A}$
by $x^*$ and $d^*$; 
\item [(ii)] to derive an explicit expression for the dual variable $d^*$ on
$\calA$.
\end{itemize}
We crucially exploit the optimality condition \eqref{eqn:nec} of a coordinate-wise minimizer $x^*$
to obtain the requisite ingredients (i) and (ii). Recall that the active set $\mathcal{A}$ of $x^*$ defined in Section
\ref{sec:op} is its support, i.e., $\mathcal{A} = \{i: x_i^* \neq 0\}.$ To see (i), by Lemma
\ref{lem:necopt} and the property of the operator $S^\rho$ in Lemma \ref{lem:proximal}, one observes
\begin{itemize}
\item for capped-$\ell^1$, SCAD and MCP penalties, $|x_i^* + d_i^*| > T^* \Leftrightarrow x_i^* \neq 0$,
\item for $\ell^\tau$ penalty, $0\leq \tau <1$, $\left\{\begin{array}{l}|x_i^* + d_i^*| > T^* \Rightarrow x_i^* \neq 0,\\ |x_i^* + d_i^*| < T^* \Rightarrow x_i^* = 0, \\ |x_i^* + d_i^*| = T^* \Rightarrow x_i^* = 0 \mbox{ or } t^*. \end{array}\right.$
\end{itemize}
Hence, except the case $|x_i^* + d_i^*| = T^*$ for the $\ell^0$ and bridge penalty, the active set
$\mathcal{A}$ can be determined by using both primal and dual variables. Next we derive
explicitly the dual variable $d^*$ on the set $\calA$, i.e., (ii). Straightforward computations show the formulas in
Table \ref{tab:activeset}; see Appendix \ref{app:dual} for details. We summarize these discussions in the following
proposition, which form the basis for constructing the PDAS algorithm below.

\begin{proposition}\label{prop:pdasupdate}
Let $x^*$ and $d^*$ be a coordinate-wise minimizer and the respective
dual variable, $\calA$ be the active set, and let
\begin{equation*}{\small
  \widetilde\calA = \left\{\begin{array}{ll}
    \left\{i:\ |x_i^*+d_i^*|=T^*\right\},  & \ell^0,\ \mathrm{bridge},\\ [1.2ex]
    \left\{i:  |x_i^* + d_i^*| = \lambda(\tau + \tfrac{1}{2})\right\}, & \mathrm{capped}\mbox{-}\ell^1,\\ [1.2ex]
    \emptyset, &   \mathrm{SCAD}, \ \mathrm{MCP}.
  \end{array}\right.}
\end{equation*}
If the set $\widetilde\calA=\emptyset$, then
$(\mathrm{i})$ $\mathcal{A}$ can be characterized by $\{i: |x_i^* + d_i^*| > T^*\}$, and
$(\mathrm{ii})$ the dual variable $d^*$ on $\calA$ can be uniquely written as in Table \ref{tab:activeset}.
\end{proposition}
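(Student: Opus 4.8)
The plan is to read everything off the necessary optimality condition \eqref{eqn:nec}, namely $x_i^* \in S^\rho_{\lambda,\tau}(s_i^*)$ with $s_i^* := x_i^* + d_i^*$ and $d_i^* = \psi_i^t(y - \Psi x^*)$, combined with the closed forms of the thresholding operators in Table \ref{tab:nonconvex} and the threshold values $T^*$ from Theorem \ref{thm:tT}. The key structural observation is that the set $\widetilde\calA$ collects exactly those indices at which $S^\rho_{\lambda,\tau}$ is genuinely set-valued on $\{|v|\geq T^*\}$: for $\ell^0$ and bridge this happens only at $|v|=T^*$, where $S^\rho_{\lambda,\tau}=\{0,\sgn(v)t^*\}$, and for capped-$\ell^1$ only at $|v|=\lambda(\tau+\tfrac{1}{2})$, while for SCAD and MCP the operator is everywhere single-valued (cf. Remark \ref{rem:1}), so $\widetilde\calA=\emptyset$ holds automatically. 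Hence, under the hypothesis $\widetilde\calA=\emptyset$, the inclusion \eqref{eqn:nec} becomes an identity $x_i^* = S^\rho_{\lambda,\tau}(s_i^*)$ for every $i$, and $S^\rho_{\lambda,\tau}$ is single-valued at each $s_i^*$.

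For part (i), I would establish the equivalence $x_i^*\neq 0 \Leftrightarrow |s_i^*|>T^*$ directly from the tabulated operators, as in the bulleted discussion preceding the statement. For capped-$\ell^1$, SCAD and MCP one has $S^\rho_{\lambda,\tau}(v)=0$ precisely when $|v|\leq T^*=\lambda$ and $S^\rho_{\lambda,\tau}(v)\neq 0$ when $|v|>T^*$, so $x_i^*=S^\rho_{\lambda,\tau}(s_i^*)\neq0$ iff $|s_i^*|>T^*$. For $\ell^0$ and bridge the only obstruction is the boundary $|s_i^*|=T^*$, which $\widetilde\calA=\emptyset$ excludes; away from it $S^\rho_{\lambda,\tau}(v)=0$ for $|v|<T^*$ and is single-valued and nonzero for $|v|>T^*$ (equal to $v$ for $\ell^0$, to $\mathop{\mathrm{argmin}}_{u>0}G(u)$ or its reflection for bridge). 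Taking the union over $i$ yields $\calA=\{i: x_i^*\neq0\}=\{i:|x_i^*+d_i^*|>T^*\}$, which is assertion (i).

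For part (ii), on $\calA$ the value $x_i^* = S^\rho_{\lambda,\tau}(s_i^*)$ is nonzero, hence a stationary point of the scalar objective $\phi(u)=\tfrac{1}{2}(u-s_i^*)^2+\rho_{\lambda,\tau}(u)$ lying strictly away from $0$; the first-order condition gives $d_i^* = s_i^*-x_i^* \in \partial\rho_{\lambda,\tau}(x_i^*)$. I would then evaluate this in each magnitude regime of $|s_i^*|$ (the very regimes that index the branches of $S^\rho_{\lambda,\tau}$), substituting the explicit thresholding value. For $\ell^0$ the active branch is $x_i^*=s_i^*$, so $d_i^*=0$. For bridge the penalty is smooth at $x_i^*\neq0$ and the power rule gives $d_i^* = \rho'_{\lambda,\tau}(x_i^*)=\lambda\tau|x_i^*|^{\tau-1}\sgn(x_i^*)=\lambda\tau|x_i^*|^\tau/x_i^*$. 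For capped-$\ell^1$, SCAD and MCP, substituting the branchwise form of $x_i^*$ into $d_i^*=s_i^*-x_i^*$ (equivalently, reading off $\rho'_{\lambda,\tau}(x_i^*)$ on the relevant affine piece) reproduces the remaining rows of Table \ref{tab:activeset}. Uniqueness is then immediate: since $\widetilde\calA=\emptyset$ removes every multivalued point, the maps $s_i^*\mapsto x_i^*$ and $s_i^*\mapsto d_i^*=s_i^*-x_i^*$ are single-valued.

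The routine but delicate part is the regime bookkeeping for capped-$\ell^1$, SCAD and MCP: I must confirm that the nonzero minimizer $x_i^*$ never lands on a kink of $\rho_{\lambda,\tau}$ (for capped-$\ell^1$, the point $|t|=\lambda\tau$), so that $\rho_{\lambda,\tau}$ is differentiable at $x_i^*$ and the stationarity identity $d_i^*=\rho'_{\lambda,\tau}(x_i^*)$ is legitimate, and that the thresholds on $|s_i^*|$ separating the branches of $S^\rho_{\lambda,\tau}$ map correctly onto the thresholds on $|s_i^*|$ appearing in Table \ref{tab:activeset}. These are elementary case checks driven entirely by the closed forms in Table \ref{tab:nonconvex}, and constitute the deferred computation in Appendix \ref{app:dual}.
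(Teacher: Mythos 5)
Your proposal is correct and follows essentially the same route as the paper: part (i) is read off from the case analysis of the thresholding operators (the bulleted observation preceding the proposition), and part (ii) is obtained exactly as in Appendix \ref{app:dual}, by partitioning $\calA$ according to the magnitude of $s_i^*=x_i^*+d_i^*$ and using the stationarity identity $d_i^*=s_i^*-x_i^*=\rho'_{\lambda,\tau}(x_i^*)$ on each branch. The only cosmetic difference is that you make explicit the single-valuedness bookkeeping that the paper leaves implicit in Remark \ref{rem:1}.
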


The set $\widetilde{\calA}$ is always empty for the SCAD and MCP. For the $\ell^0$,
bridge and capped-$\ell^1$ penalty, it is likely empty, which, however, cannot
be \textit{a priori} ensured.

Using Proposition \ref{prop:pdasupdate}, now we are ready to derive a unified PDAS algorithm. First,
note that on the active set $\mathcal{A}$, the dual variable $d^*$ has two equivalent expressions,
i.e., the defining relation
\begin{equation*}
  \begin{aligned}
    \Psi_\calA^t(y-\Psi_\calA x^*_\calA) & = d_\calA^*,
  \end{aligned}
\end{equation*}
and the expression $d_\calA^*=d_{\calA}(x^*,d^*)$ from Proposition \ref{prop:pdasupdate}(ii).
This is the starting point for the PDAS algorithm. Similar to the case of convex optimization
problems \cite{HintermullerItoKunisch:2003,FanJiaoLu:2014}, at each iteration, with $(x_k,d_k)$
being the current primal and dual variables, first we
approximate the active set $\calA$ and inactive set $\calI$ by $\calA_k$ and $\calI_k$
respectively defined by
\begin{equation*}
  \calA_k=\{i: |x_i^{k-1}+d_i^{k-1}|> T^*\}\quad\mbox{and}\quad \calI_k = \calA_k^c.
\end{equation*}
Then we update the primal variable $x^k$ on the active set $\calA_k$ by
\begin{equation}\label{eqn:update}
  \Psi_{\calA_k}^t( y - \Psi_{\calA_k} x^k_{\calA_k}) =  p_{\calA_k},
\end{equation}
where $p_{\calA_k}$ is a suitable approximation of the dual variable $d^*$ on the active set
$\calA_k$ to be given below,  and set $x^k$ to zero on the inactive set $\calI_k$. Finally
we update the dual variable $d^k$ by
\begin{equation*}
  d^k = \Psi^t(y-\Psi x^k).
\end{equation*}
We summarize the above description in Algorithm \ref{alg:pdas}. It is important to note that the
algorithm takes a uniform form for all five nonconvex penalties, and the implementation is
straightforward and varies very little for different penalties: the only differences lie in
the value of $T^*$ and the approximate dual $p_{\calA_k}$. Note that Algorithm \ref{alg:pdas}
is a Newton type method, and good initial guess is required for the convergence. Clearly, an
inadvertent choice can seriously compromise the accuracy of the estimate. The important
issue of initial guess will be addressed below in Section \ref{ssec:cont}.

\begin{algorithm}[H]
   \caption{Unified primal-dual active set algorithm: $x_{\lambda} \leftarrow\textit{updas}(\rho,\tau,\lambda,K,x^0)$}\label{alg:pdas}
   \begin{algorithmic}[1]
     \STATE Input: Penalty $\rho$,  parameters $\tau$, $\lambda$, $K$. Set initial guess $x^0$ and find $d^0= \Psi^t( y -\Psi x^0)$.
     \FOR {$k=1,2,...K$}
     \STATE Compute the active and inactive sets $\calA_k$ and $\calI_k$ respectively by\\
          \quad \quad  \quad $ \calA_k = \{i: |x^{k-1}_i+ d^{k-1}_i| > T^*\}\quad\mbox{and}\quad \calI_k = \mathcal{A}_k^c,$ where, $T^*$ is given in Lemma  \ref{thm:tT}.
     \STATE Update the primal and dual variable $x^k$ and $d^k$ respectively by
       \begin{equation*}
        \left\{
          \begin{array}{l}
           x_{{\cal I}_k}^{k} = \textbf{0}_{{\cal I}_k}, \\[1.2ex]
           \Psi_{{\cal A}_k}^t \Psi_{{\cal A}_k} x_{{\cal A}_k}^{k} = \Psi_{{\cal A}_k}^t y - p_{\mathcal{A}_k},  \\[1.2ex]
           d^{k} = \Psi^t(\Psi x^k - y),
          \end{array}\right.
       \end{equation*}
       where $p_{\calA_k}$ is given in Table \ref{tab:activeset}.
     \STATE Check the stopping criterion.
     \ENDFOR
     \STATE Output: $x_{\lambda}$.
   \end{algorithmic}
\end{algorithm}

The choice of the approximate dual variable $p_{\calA_k}$ is related to the expression of the dual variable $d^*_\calA$,
cf. Proposition \ref{prop:pdasupdate}. For example, a natural choice of $p_{\calA_k}$ for
the bridge penalty is given by $p_i = \lambda\tau |x_i^{k}|^\tau/x_i^{k}$ for $i\in\calA_k$.
However, it leads to a nonlinear system for updating $x^k$.
In Algorithm \ref{alg:pdas} we choose an explicit expression for $p_{{\cal A}_k}$, cf. Table \ref{tab:activeset}, which
amounts to the one-step fixed-point iteration of the nonlinear equation. It is worth noting that this choice of $p_{{\cal A}_k}$
ensures its boundedness. That is, each component $p_{i}$ satisfies
\begin{equation}\label{eqn:bound}
|p_{i}| \leq \left\{\begin{array}{ll}
 0 & \ell^0,\\[1.2ex]
 \lambda^{\frac{1}{2-\tau}} (2(1-\tau))^{\frac{\tau-1}{2-\tau}} & \mathrm{bridge},\\[1.1ex]
\lambda & \mathrm{capped-}\ell^1, \mathrm{MCP} \\[1.1ex]
\frac{\tau}{\tau-1} \lambda & \mathrm{SCAD}.
\end{array}\right.
\end{equation}

\begin{table}[H]
  \centering
  \caption{Explicit expression of the dual variable $d^*_\calA$ on the active
   set $\calA=\{i: x^*_i\neq 0\}$, and its approximation $p_{\calA_k}$ on $\calA_k
   =\{i: |s_i^{k-1}|> T^*\}$, with $s_i=d_i+x_i$.}\label{tab:activeset}
   \vspace{-.3cm}{\scalebox{0.9}{
  \begin{tabular}{ll}
   \hline
   penalty          & $d_\calA^*$ \\
   \hline\\
    $\ell^0$        & 0                                        \\
    $\ell^\tau$     & $\lambda\tau \frac{|x_i^*|^\tau}{x_i^*}$ \\
    capped-$\ell^1$ & $\left\{\begin{array}{ll}
       0 & \mbox{if } |s_i^*| > \lambda(\tau + \tfrac{1}{2})\\[1.1ex]
      \mathrm{sgn}(s_i^*) \lambda & \mbox{if } \lambda < |s_i^*| < \lambda(\tau + \tfrac{1}{2})\\[1.1ex]
      \{0,\mathrm{sgn}(s_i^*) \lambda\} & \mbox{if } |s_i^*| = \lambda(\tau + \tfrac{1}{2})
      \end{array}\right.$ \\
    SCAD & $ \left\{\begin{array}{ll}
      0 & \mbox{if }|s_i^*|\geq \lambda\tau\\[1.2ex]
      \tfrac{1}{\tau-1}(\sgn(s_i^*)\lambda\tau - x_i^*) & \mbox{if }\lambda\tau >|s_i^*| > 2\lambda\\[1.1ex]
      \sgn(s_i^*)\lambda & \mbox{if } 2\lambda \geq |s_i^*| > \lambda\end{array}\right.$\\
    MCP & $\left\{\begin{array}{ll}0 & \mbox{if } |s_i^*| \geq \lambda\tau\\[1.1ex]
        \tfrac{1}{\tau}(\mathrm{sgn}(s_i^*)\lambda\tau - x_i^*) & \mbox{if }\lambda <|s_i^*| < \lambda\tau\end{array}\right.$
        \\
  \hline
   penalty         & $p_{\calA_k}$ \\
   \hline\\
    $\ell^0$       &          0                              \\
    $\ell^\tau$    & $\left\{\begin{array}{ll}               0 & |x_i^{k-1}|< t^* \\[1.1ex]
                                                             \lambda\tau \frac{|x_i^{k-1}|^\tau}{x_i^{k-1}} & |x_i^{k-1}|\geq t^* \end{array}\right.$, $t^* = (2\lambda(1-\tau))^{\frac{1}{2-\tau}}$\\
    capped-$\ell^1$ &$\left\{\begin{array}{ll}
     0 & \mbox{if } |s_i^{k-1}|\geq \lambda(\tau + \tfrac{1}{2})\\[1.1ex]
    \sgn(s_i^{k-1})\lambda & \mbox{if }\lambda <|s_i^{k-1}| <\lambda(\tau + \tfrac{1}{2}) \end{array}\right.$\\
    SCAD      &$\left\{\begin{array}{ll}
    \frac{1}{\tau-1}(\sgn(s_i^{k-1})\lambda\tau - x_i^{k-1})&\mbox{if }\lambda\tau > |s_i^{k-1}| > 2\lambda \mbox{ and } x_i^{k-1}\cdot d_i^{k-1} \geq 0\\[1.1ex]
    \mathrm{sgn}(s^{k-1}_i)\lambda &\mbox{if }2\lambda \geq |s_i^{k-1}| > \lambda\\[1.1ex]
    0 & \mbox{otherwise}
     \end{array}\right.$\\\\
    MCP &$\left\{\begin{array}{ll}
    \frac{1}{\tau}(\mathrm{sgn}(s^{k-1}_i)\lambda\tau - x_i^{k-1}) &\mbox{if }\lambda < |s_i^{k-1}| < \lambda\tau \mbox{ and } x_i^{k-1}\cdot d_i^{k-1} \geq 0\\[1.2ex]
  0&\mbox{otherwise }
   \end{array}\right.$\\
  \hline
  \end{tabular}}}
\end{table}

The stopping criterion at step 5 of Algorithm \ref{alg:pdas} is chosen to be either $\mathcal{A}_k = \mathcal{A}_{k+1}$  or $k\geq K $ for some fixed small integer  $K >0$.

\subsection{Continuation strategy and tuning parameter selection}\label{ssec:cont}

To successfully apply Algorithm \ref{alg:pdas} (i.e., UPDAS) to the model \eqref{eqn:model}, there are two important practical
issues, i.e., the initial guess $x^0$ in Algorithm \ref{alg:pdas} and the choice of the regularization parameter
$\lambda$, which we discuss separately below.

Since the PDAS algorithm is a Newton type method, it merits the highly
desirable fast (or superlinear) convergence, but only in the neighborhood of a minimizer. This is also expected to
be the case for the model \eqref{eqn:model}, in light of the nonconvexity of the penalties. Hence, in order to fully
exploit the fast local convergence feature, a good
initial guess is required, which unfortunately is often unavailable in practice. In this work, we adopt a
continuation strategy to arrive at a good initial guess, which serves the role of globalizing the PDAS algorithm.
Specifically, let $\lambda_s = \lambda_0 \gamma^s$, $\gamma\in(0,1)$, be a decreasing sequence of regularization parameters. Then
we apply Algorithm \ref{alg:pdas} on the sequence $\{\lambda_s\}_{s}$, with the solution $x_{\lambda_s}$
being the initial guess for the $\lambda_{s+1}$-problem. The overall algorithm is given in Algorithm \ref{alg:pdasc},
and termed as UPDASC.

\begin{algorithm}[H]
   \caption{Unified primal-dual active set with continuation algorithm (UPDASC)}\label{alg:pdasc}
   \begin{algorithmic}[1]
     \STATE Input $\lambda_0$ by \eqref{eqn:lambda0} and $\gamma \in (0,1)$. Let $x_{\lambda_0} = 0$.
     \FOR {$s=1,2,...$}
     \STATE   Run Algorithm  \ref{alg:pdas}  $\textit{updas}(\rho,\tau,\lambda,K,x^0)$ to problem \eqref{eqn:model} with $\lambda = \lambda_s :=\gamma^s\lambda_0$,  $x^0  = x_{\lambda_{s-1}}$ to get $x_{\lambda_{s}}$.
     \STATE If the optional stopping condition  $\|x_{\lambda_s}\|_0 > \lfloor \frac{n}{\log n} \rfloor$ holds,  stop.
     \ENDFOR
     \STATE Output: Solution path  $\{x_{\lambda_s}\}_{s= 1,2...}.$
   \end{algorithmic}
\end{algorithm}

The initial guess $\lambda_0$ is chosen large enough such that $0$ is the global
minimizer of the model \eqref{eqn:model}. In particular, we can choose it by
\begin{equation}\label{eqn:lambda0}
\lambda_{0} = \left\{\begin{array}{ll}
 \frac{1}{2}\|\Psi^t y\|_{\infty}^2 & \ell^0,\\[1.1ex]
 (\frac{\|\Psi^t y\|_{\infty}}{2-\tau})^{2-\tau} (2(1-\tau))^{1-\tau} & \mbox{bridge},\\[1.1ex]
\|\Psi^t y\|_{\infty} & \mbox{capped-}\ell^1, \mbox{SCAD, MCP}.
\end{array}\right.
\end{equation}
It is worth noting that with this choice of $\lambda_0$, Algorithm \ref{alg:pdasc}
is essentially free from the troublesome issue of choosing initial guess.

The regularization parameter $\lambda$ in the model \eqref{eqn:model}  compromises the tradeoff between
the data fidelity and the sparsity level of the solution, and it plays a crucial role in obtaining
good reconstructions. However, how to choose a proper value in high-dimension is one notoriously challenging
problem. There are several possible rules, e.g., cross validation, balancing principle \cite{ItoJin:2015}, L-curve, Bayesian
information criterion \cite{WangKimLi:2013}.
In this work, we advocate the following simple approach: first run Algorithm \ref{alg:pdasc} (i.e., UPDASC) to
obtain a solution path until, e.g., $\|x_{\lambda_s}\|_0 > \lfloor \frac{n}{\log n} \rfloor$
for some $s$, say, $s = S$. Let $\Lambda_{\ell} = \{\lambda_s: \|x_{\lambda_s}\|_0  = \ell, s = 1,..., S\},
\quad  \ell  =1,...,\lfloor \frac{n}{\log n} \rfloor$ be the set of tuning parameter at which the output of
UPDAS has $\ell$ nonzero elements. Then we determine the optimal $\lambda$ by voting \cite{HuangJiaoLuZhu:2017}, i.e.,
\begin{equation}\label{autoreg}
  \hat{\lambda} = \max \{\Lambda_{\bar{\ell}}\} \quad  \textrm{and} \quad \bar{\ell } = \arg\max_{\ell}\{|\Lambda_{\ell}|\}.
\end{equation}
The tuning parameter selection  rule \eqref{autoreg} is seamlessly integrated with the
continuation strategy without any extra computational overhead, since the requisite solutions
along the path have been all obtained by UPDASC algorithm. In practice, the approach works strikingly
well; see example \ref{exam:recovery} in Section \ref{sec:num} for an illustration.

\subsection{Consistency of UPDASC}
Last we discuss the consistency  of Algorithm \ref{alg:pdasc}. We shall focus on noise free data, and
in the presence of noise, one can similarly  derive a bound that is proportion to noise level  on the
estimation  error, i.e., the error between the output and the underlying regression target, but the proof is much more involved (see
\cite{HuangJiaoLuYang:2016} and \cite{JiaoJinLu:2015}  for the lasso and $\ell^0$ cases, respectively).
Let $y = \Psi x^\dag$, where $x^\dag$ is the target  sparse vector with its active set ${\cal A}^\dag = \{i:
x_i^\dag \neq 0\}$ and $T = |{\cal A}^\dag|$. The restricted isometry property (RIP) \cite{CandesRombergTao:2006} of
order $k$ with constant $\delta_k$ of a matrix $\Psi$ is defined as follows: Let $\delta_k \in (0, 1)$ be the smallest constant such that
\begin{equation*}
(1-\delta_k)\|x\|^2 \leq \|\Psi x\|^2 \leq (1+\delta_k)\|x\|^2
\end{equation*}
holds for all $x$ with $\|x\|_0 \leq k$. Now we make the following assumption.
\begin{assumption}\label{assumption:1}
The matrix $\Psi$ satisfies the RIP condition with a RIP constant
$$\delta\equiv\delta_{T+1}\leq \left\{\begin{array}{ll}\frac{1}{\sqrt{5T}+1} & \mathrm{capped-}\ell^1, \mathrm{ MCP}, \\[1.2ex]
\frac{1}{\sqrt{8T} +1} & \mathrm{SCAD}, \\[1.2ex]
\frac{2-\tau}{2-\tau + \sqrt{T[(4- 2\tau)^2 +1]}} & \mathrm{bridge}. \end{array}\right.$$
\end{assumption}

\begin{remark}
In the absence of regularity conditions on design matrix $\Psi$, target
solution $x^{\dag}$ and starting values, the active set sequence generated via PDAS
algorithm may cycle (see, e.g., \cite{Han:2015,JiaoJinLu:2015}), and thus the convergence
of the inner iterate can generally not be guaranteed. However, the convergence of
UPDASC, i.e., with continuation, is ensured, provided certain conditions, e.g., on
the matrix $\Psi$, are satisfied.
\end{remark}

Now we can state the convergence of Algorithm \ref{alg:pdasc} under Assumption \ref{assumption:1}, and the
lengthy and technical proof is deferred to Appendix \ref{app:convergence}. The well-definedness means that
the linear system for updating the primal variable is invertible. Note also that the inner iteration can
always terminate, due to the choice of a finite maximum number of iterations.
\begin{theorem}\label{thm:convergence}
Let Assumption \ref{assumption:1} hold. Then for any $\gamma\in(0,1)$ sufficiently close to $1$ {\rm(}the precise
range of $\gamma$ is given explicitly in the proof of Theorem \ref{thm:convergence} in Appendix
\ref{app:convergence}{\rm)}  Algorithm \ref{alg:pdasc} is well-defined and
\begin{equation*}
\|x_{\lambda_s} - x^\dag\|\leq \frac{\sqrt{T}C_{\lambda_s}}{1-\delta}
\end{equation*}
as $s \geq \mathcal{O} (\log_{\frac{1}{\gamma}} \frac{\lambda_0}{|x_i^\dag|_{\min}})$, where
$|x^\dag|_{\min} =  \min\left\{|x_i^\dag|: x_i^\dag \neq 0 \right\}$ and
\begin{equation*}
C_{\lambda_s}= \left\{\begin{array}{ll}
 0 & \ell^0,\\[1.2ex]
 \lambda_{s}^{\frac{1}{2-\tau}} (2(1-\tau))^{\frac{\tau-1}{2-\tau}} & \mathrm{bridge},\\[1.1ex]
\lambda_s & \mathrm{capped-}\ell^1, \mathrm{MCP} \\[1.1ex]
\frac{\tau}{\tau-1} \lambda_s & \mathrm{SCAD}.
\end{array}\right.
\end{equation*}
\end{theorem}

Theorem \ref{thm:convergence} implies that for noise free data, the UPDASC solution is consistent with the true
sparse solution $x^\dag$ when $s$ is large enough for the  $\ell^0$ regularized model and as $s\rightarrow +\infty$
for other nonconvex models, respectively. Further, the proof of Theorem \ref{thm:convergence} in
Appendix \ref{app:convergence} indicates that the continuation strategy actually allows a precise control
over the evolution of the active set during the iteration, cf. Lemma \ref{lem:monotone}, in addition
to providing a good initial guess. These observations clearly show the viability of the continuation strategy as
a globalization technique for the PDAS algorithm for nonconvex sparse recovery models.

\begin{remark}
The recovery guarantee in Theorem \ref{thm:convergence} relies on RIP type conditions, and similar conditions
were used for orthogonal matching pursuit \cite{HuangZhu:2011,MoShen:2012}. Note that for lasso, RIP
type condition of the form that $\delta_{aT}$ for some $a>1$ is sufficiently small on the matrix $\Psi$ ensures
stable recovery \cite{CandesRombergTao:2006}. Further, the restricted eigenvalue condition and minimal signal
strength condition provide statistical guarantee for the global minimizers for a class of nonconvex models
\cite{ZhangZhang:2012}; see also \cite{FengZhang:2017} for more recent refinements. It is enormous interest to
derive performance guarantee for Algorithm \ref{alg:pdasc} under analogous
conditions to these alternatives for lasso, thereby filling the gap between the theory and extremely encouraging
empirical success. One challenge of such an analysis for UPDASC is to bound the estimation error dynamically.
\end{remark}

\section{Numerical experiments and discussions} \label{sec:num}
In this section we showcase the performance of Algorithm \ref{alg:pdasc} (UPDASC) for the nonconvex penalties
in Table \ref{tab:nonconvex} on both simulated and real data. All the experiments are done on a four core
desktop with 3.47 GHz and 8 GB RAM. The \texttt{MATLAB} and \texttt{R} packages (Unified-PDASC) are available
at the following links \url{http://www0.cs.ucl.ac.uk/staff/b.jin/software/updasc.zip} and
\url{https://github.com/gordonliu810822/PDAS}, respectively.

\subsection{Experiment setup}
First we describe the problem setup, i.e., data generation and parameter choice. In all numerical examples except Example \ref{exam:GWAS},
the underlying true target  $x^{\dag}$ is given, and the response vector $y$ is generated by $y = \Psi x^{\dag} + \eta$,  where,  $\eta$  $\sim \mathcal{N}(0,\sigma^2 I_n)$ denotes the  noise.
 Unless otherwise stated, the standard deviation $\sigma$ is fixed at $\sigma=0.5$.


The matrix
$\Psi$ is generated as follows.
\begin{itemize}
\item[(i)] The rows of $\Psi$ are iid samples  from  $\mathcal{N}(0,\Sigma)$ with $\Sigma_{k,\ell} = \mu^{|k-\ell|}, 1 \le k, \ell \le p$, $\mu \in (0,1)$, we keep the convention $0^{0} = 1$. Unless otherwise stated, we set $\mu = 0.5$.
\item[(ii)] Random Gaussian matrix of size $n\times p$ with auto-correlation. First we
generate a random Gaussian matrix $\widetilde{\Psi} \in\mathbb{R}^{n\times p}$ with its
entries following i.i.d. $\mathcal{N}(0,1)$. Then we define a matrix
$\Psi\in\mathbb{R}^{n\times p}$ by setting $\psi_1 = \widetilde{\psi}_1$,
\begin{equation*}
   \psi_j = \widetilde{\psi}_j + 0.2*(\widetilde{\psi}_{j-1} + \widetilde{\psi}_{j+1}), \ \ j=2,...,p-1,
\end{equation*}
and $\psi_p= \widetilde{\psi}_p$.
\end{itemize}
These matrices are then normalized to have unit column norm.

The target  $x^\dag$ is a $T$-sparse vector whose support is uniformly distributed in $\{1,2,...p\}$
with  $\max \{|x^{\dag}_i|: x^{\dag}_i \neq 0\} = M$, $\min \{|x_i^{\dag}|: x^{\dag}_i \neq 0\} = m.$
Below we set $M = 10, m = 1$. We set $\lambda_{\max} = \lambda_0$ as in
\eqref{eqn:lambda0} and $\lambda_{\min} = 10^{-8} \lambda_{\max}$. The interval $[\lambda_{\min},
\lambda_{\max}]$ is divided into $N$ ($N=100$ in this work) equal subintervals
on a log-scale and let $\lambda_s$, $s=0,...,N$, be the $s$-th value (in descending
order). Unless otherwise specified, we set $\tau = 0.5,\
3.7,\ 2.7$, and 1.5 for the bridge, SCAD, MCP and capped-$\ell^1$ penalty, respectively.
Note the value of the parameter $\tau$ influences the convergence behavior of the UPDAS
algorithm and statistical estimates. However, an in-depth study of the interesting
theoretical question is beyond the scope of this work.

\subsection{Numerical results and discussions}
Now we present numerical examples to illustrate the
accuracy and efficiency of  Algorithm \ref{alg:pdasc} with tuning parameter selection rule \eqref{autoreg}.

\begin{exam}\label{exam:recovery}
The first test illustrates the accuracy of the proposed tuning parameter selection rule  \eqref{autoreg}.
We compute the exact support recovery probability  at different sparsity levels for all the penalties in
Table \ref{tab:nonconvex} (including lasso), where, $\hat{\lambda}$ is determined by \eqref{autoreg}.
The matrix $\Psi\in\mathbb{R}^{500\times 1000}$ is generated according to setting (i), the true signal $x^{\dag}$
has a support size of $10:10:150$, that is, from 10 to 150 with a step size of 10, and the noise standard
deviation is $\sigma = 0.1$.
\end{exam}

The ratio $\frac{\sum _{i = 1}^{100}\textbf{1}_{\textrm{supp}(x_{\hat{\lambda}}) = \textrm{supp}(x^{\dag})}}{100}$
is computed from 100 independent realizations. It is observed from Fig. \ref{fig:sparsitylevelexact} that the
proposed tuning parameter selection rule \eqref{autoreg} can determine correct solutions for all five nonconvex
penalties as the sparsity level varies from $10$ to $100$. Thus, the rule \eqref{autoreg} represents a feasible
approach for selecting the nontrivial parameter $\lambda$. From the figure, the superiority of the nonconvex
approaches for support detection over the convex approach is clearly observed.

\begin{figure}[H]
\centering
\includegraphics[trim = 0cm 0cm 0cm 0cm, clip=true,width = 10cm]{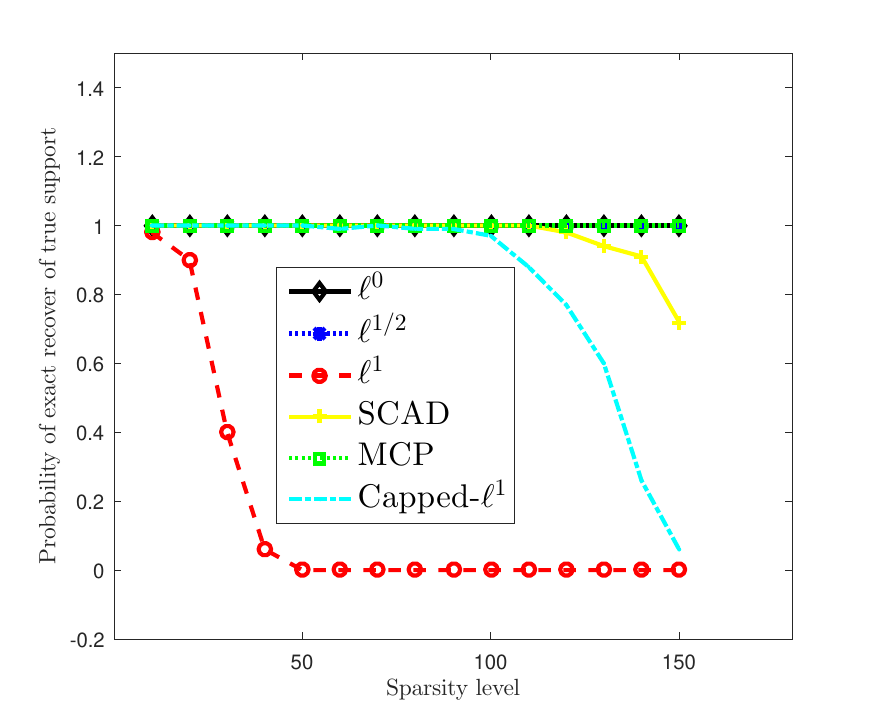}
\caption{The exact support recovery probability of the solution selected  by  \eqref{autoreg}   for Example \ref{exam:recovery}.
}\label{fig:sparsitylevelexact}
\end{figure}

In the next experiment, we compare Algorithm \ref{alg:pdasc} with an existing general iterative
shrinkage and thresholding algorithm (GIST) \cite{GongZhangLuHuangYe:2013} (available online at
\url{http://www.public.asu.edu/~jye02/Software/GIST/}). We also compare its efficiency with GLMNET, one of the
fastest lasso solvers currently available. We run GIST and GLMNET along the same path that is used in UPDASC
with the tuning parameter selection rule \eqref{autoreg}.
\begin{exam}\label{exam:perfcomparison}
We consider the following three different problem settings:
\begin{itemize}
  \item[$(\mathrm{a})$] The matrix $\Psi\in\mathbb{R}^{500\times5000}$ is generated according to setting (i),
    and the signal $x^{\dag}$ contains $20$ nonzero elements.
  \item[$(\mathrm{b})$] The matrix $\Psi\in\mathbb{R}^{1000\times 10000}$ is is generated according to setting (ii) and the signal $x^{\dag}$ contains
    $50$ nonzero elements.
  \item[$(\mathrm{c})$] The matrix $\Psi\in\mathbb{R}^{1000\times100000}$  is generated according to setting (i) with $\mu  = 0$, and the signal $x^{\dag}$
    contains $50$ nonzero elements.
\end{itemize}
\end{exam}

The performance is evaluated in terms of average CPU time (in seconds) and average relative error  defined as $\frac{\|x_{\hat{\lambda}} - x^{\dag}\|}{\|x^{\dag}\|}$,
which are computed based on  10 independent realizations of the problem setup. The
results are summarized in Tables \ref{updaste1}-\ref{updaste3}.

\begin{table}[H]
  \caption{Results for Example \ref{exam:perfcomparison}(a), CPU time in seconds and relative error (RE)} 
 \label{updaste1}
  \vspace{-.3cm}
  \begin{center}
  \begin{tabular}{ccccccccc}
\hline \hline
 &\multicolumn{2}{c}{UPDASC}
 &&\multicolumn{2}{c}{GIST}&&\multicolumn{2}{c}{GLMNET}\\
\cline{1-4}\cline{5-6} \cline{8-9}
                      &time       & RE      &&time    & RE       &&time      & RE       \\
 $\ell^{0}$           & {0.15}   & {4.70e-3}     && -      &-        && -    &-     \\
 $\ell^{1/2}$         & {0.13}    & {4.60e-3}     &&-       &-        && -   &-       \\
 SCAD                 & {0.10}    & {4.50e-3}    &&2.07    &{4.50e-3}  && -    &-       \\
 MCP                  & {0.09}   & {4.50e-3}   &&2.13    &{4.50e-3}   &&-    &-      \\
 capped-$\ell^{1}$    & {0.09}   & {4.50e-3}    &&1.38    &{4.50e-3}  && -    &-    \\
 lasso                & -         &  -   && -      &-       && 0.23    &3.76e-2      \\
\hline
  \hline
  \end{tabular}
  \end{center}
\end{table}

\begin{table}[H]
  \caption{Results for Example \ref{exam:perfcomparison}(b), CPU time in seconds and relative error (RE)} 
 \label{updaste2}
  \vspace{-.3cm}
  \begin{center}
  \begin{tabular}{ccccccccc}
\hline \hline
 &\multicolumn{2}{c}{UPDASC}
 &&\multicolumn{2}{c}{GIST}&&\multicolumn{2}{c}{GLMNET }\\
\cline{1-4}\cline{5-6} \cline{8-9}
                      &time       & RE      &&time    & RE       &&time      & RE       \\
 $\ell^{0}$           &  {0.60}    & {3.10e-3}     && -      &-        && -    &-    \\
 $\ell^{1/2}$         &  {0.42}    & {3.30e-3}     &&-       &-        && -   &-       \\
 SCAD                 &  {0.32}    & {3.10e-3}    &&7.75    &{3.10e-3} && -    &-       \\
 MCP                  &  {0.31}   & {3.10e-3}  &&7.80     &{3.10e-3}  &&-    &-      \\
 capped-$\ell^{1}$    &  {0.31}   & {3.10e-3}    &&4.67    &{3.10e-3} && -    &-      \\
 lasso                & -         &  -   && -      &-       && 1.06    &4.20e-2      \\
\hline
  \hline
  \end{tabular}
  \end{center}
\end{table}

\begin{table}[H]
  \caption{Results for Example \ref{exam:perfcomparison}(c), CPU time in seconds and relative error (RE)} 
 \label{updaste3}
  \vspace{-.3cm}
  \begin{center}
  \begin{tabular}{ccccccccc}
\hline \hline
 &\multicolumn{2}{c}{UPDASC}
 &&\multicolumn{2}{c}{GIST}&&\multicolumn{2}{c}{GLMNET }\\
\cline{1-4}\cline{5-6} \cline{8-9}
                      &time       & RE      &&time    & RE       &&time      & RE       \\
 $\ell^{0}$           &  {5.20}    & {3.40e-3}     && -      &-          && -    &-    \\
 $\ell^{1/2}$         &  {3.76}    & {3.50e-3}     &&-       &-          && -   &-       \\
 SCAD                 &  {2.60}    & {3.40e-3}    &&75.5    &{3.40e-3}   && -    &-       \\
 MCP                  &  {2.59}   &  {3.40e-3}  &&75.9     & {3.40e-3}  &&-    &-     \\
 capped-$\ell^{1}$    &  {2.60}   &  {3.40e-3}    &&13.6    & {4.65e-1} && -    &-   \\
 lasso                & -         &  -   && -      &-    && 12.4    &6.16e-2      \\
\hline
  \hline
  \end{tabular}
  \end{center}
\end{table}

Since GIST does not support $\ell^\tau, \tau \in [0,1)$, we do not present the corresponding results, indicated by $-$ in the tables.
For all three cases, the proposed UPDASC is much faster than GIST and GLMNET (on average by a factor of ten-thirty
and two-five when compared with GIST and GLMNET, respectively). While the reconstruction errors by the proposed UPDAS algorithm
is almost identical with that by GIST (except the capped-$\ell^1$  in  case (c), where, our result is 100 times smaller than
GIST) and about ten times smaller than that for GLMNET. This is attributed to its local superlinear convergence, which we
shall examine more closely below. These numerical results show clearly the huge potential of the proposed UPDASC algorithm for
nonconvex sparse recovery.

We now examine the continuation strategy and local supperlinear convergence of the algorithm.
\begin{exam}\label{exam:continuation}
The matrix $\Psi\in\mathbb{R}^{200\times1000}$ is generated according to (i), and the
signal $x^{\dag}$ contains $20$ nonzero elements  and $\sigma = 0.1$.
\end{exam}

The convergence history of Algorithm \ref{alg:pdasc} for Example \ref{exam:continuation} is shown in Fig. \ref{fig:convsup}.
In the figure, the notation $A$ and $A_s$ refer respectively to the exact active set $\textrm{supp}(x^{\dag})$ and the
approximate one $\textrm{supp} (x_{\lambda_{s}})$, where $x_{\lambda_{s}}$ is the solution to the $ \lambda_s$-problem. It is
observed that the size $|A_s|$ increases monotonically as the iteration proceeds. At each $\lambda_{s+1}$, with the solution
$x_{\lambda_{s}}$ as the initial guess, Algorithm \ref{alg:pdas} generally converges within three iterations for all five
nonconvex penalties, cf., Fig. \ref{fig:convsup}, which shows clearly the highly desirable local superlinear convergence of the
algorithm. Hence, the coverall procedure in Algorithm \ref{alg:pdasc} is very efficient.

\begin{figure}[H]
  \centering
  \begin{tabular}{cccc}
    \includegraphics[trim = 0cm 0cm 0cm 0cm, clip=true,width=3.4cm]{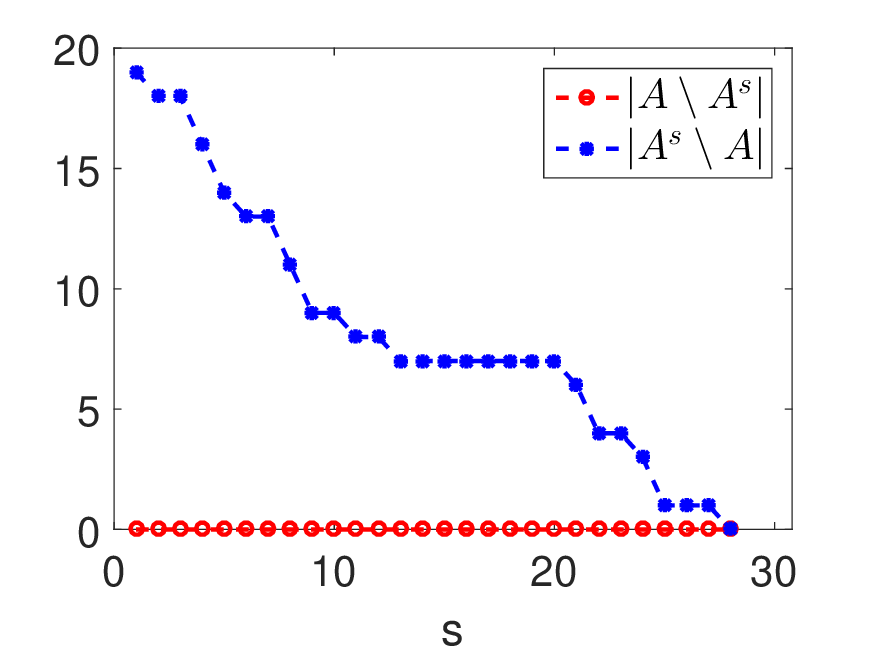} &\includegraphics[trim = 0cm 0cm 0cm 0cm, clip=true,width=3.4cm]{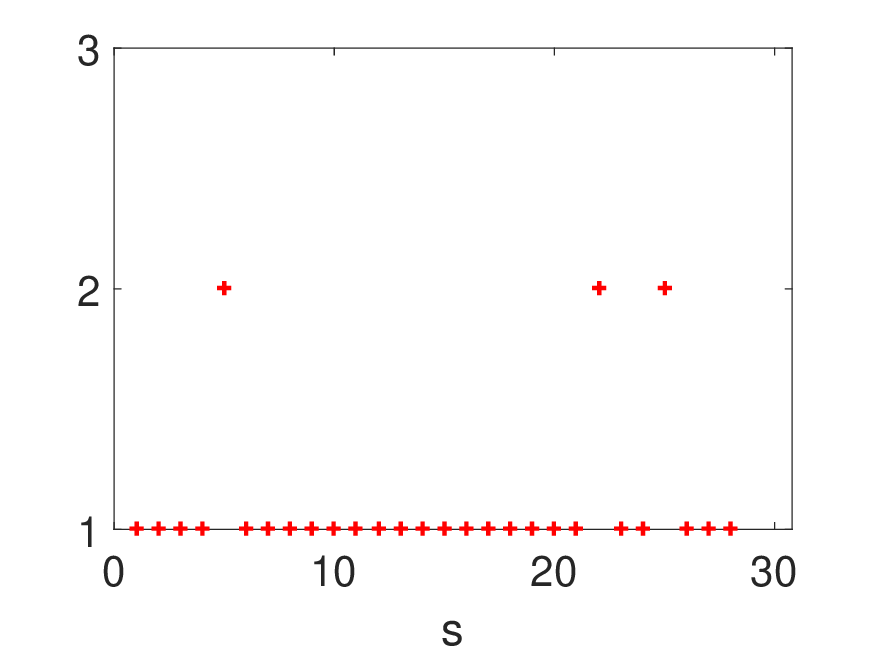} &
    \includegraphics[trim = 0cm 0cm 0cm 0cm, clip=true,width=3.4cm]{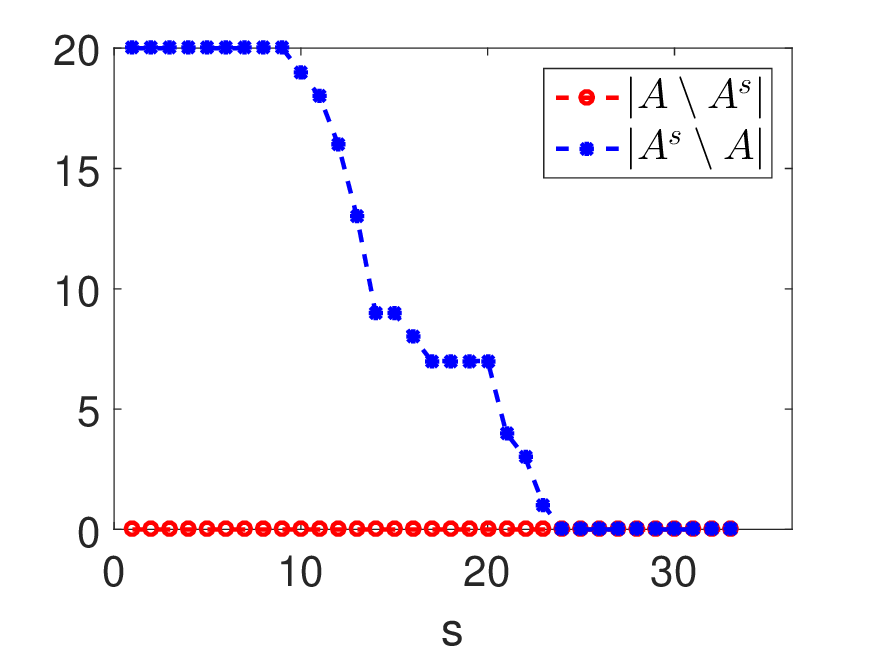} & \includegraphics[trim = 0cm 0cm 0cm 0cm, clip=true,width=3.4cm]{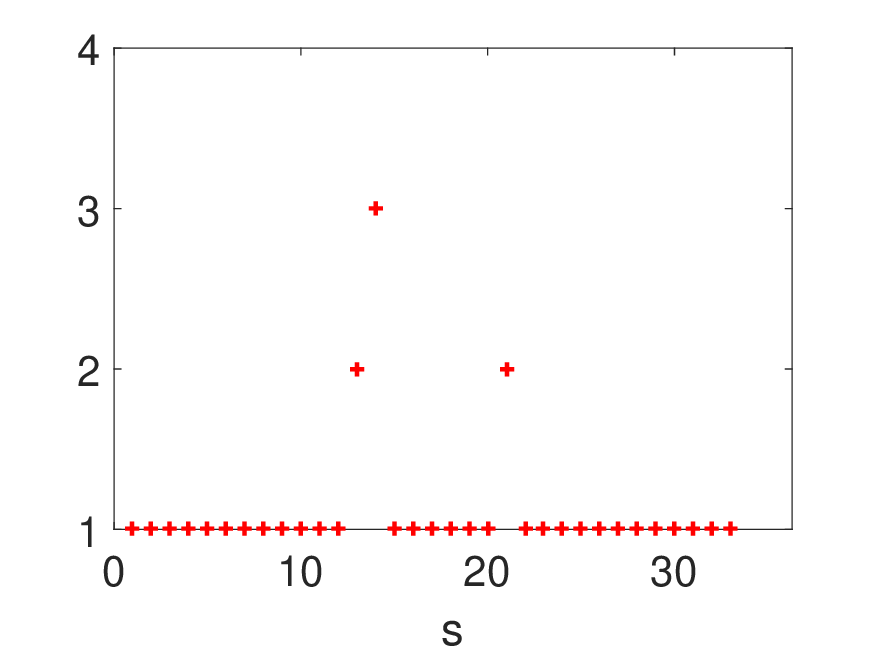}\\
    \multicolumn{2}{c}{(a) $\ell^0$} & \multicolumn{2}{c}{(b) $\ell^{1/2}$}\\
    \includegraphics[trim = 0cm 0cm 0cm 0cm, clip=true,width=3.4cm]{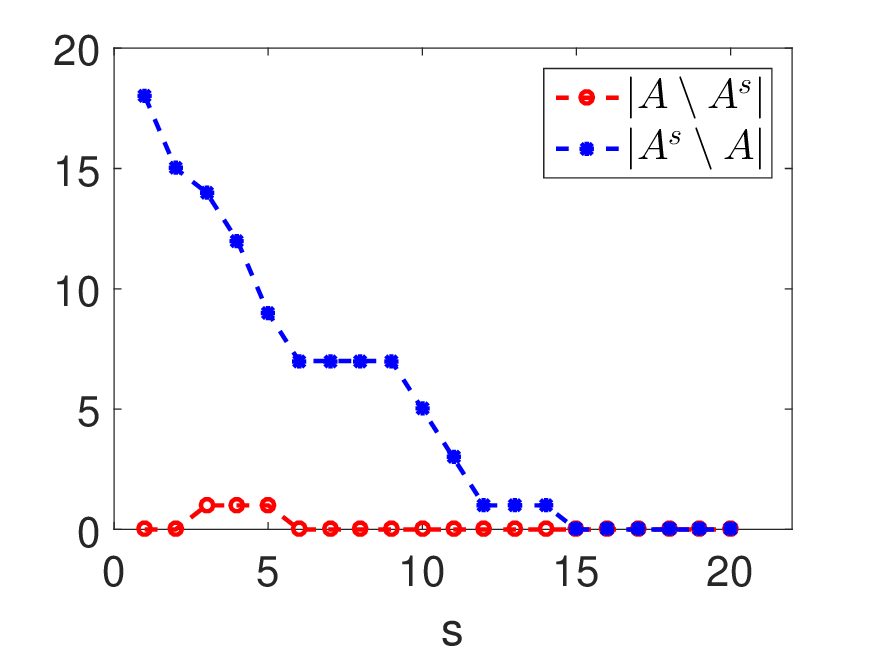} &\includegraphics[trim = 0cm 0cm 0cm 0cm, clip=true,width=3.4cm]{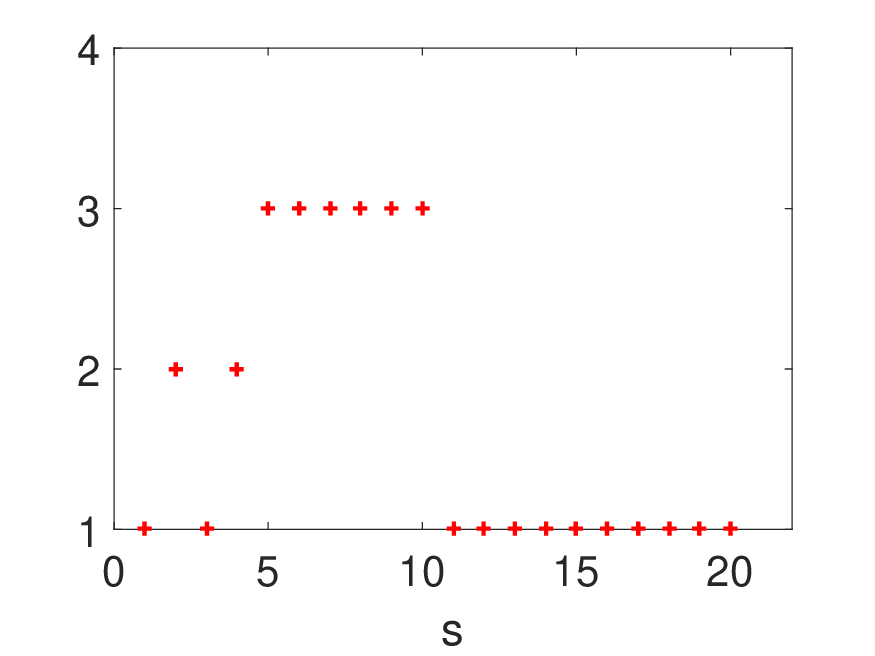} &
    \includegraphics[trim = 0cm 0cm 0cm 0cm, clip=true,width=3.4cm]{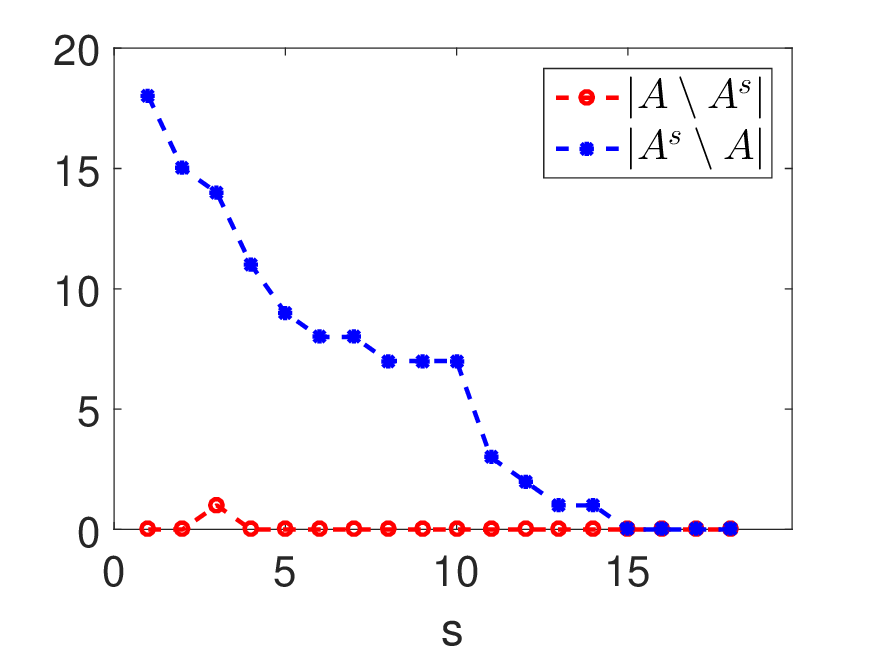} &\includegraphics[trim = 0cm 0cm 0cm 0cm, clip=true,width=3.4cm]{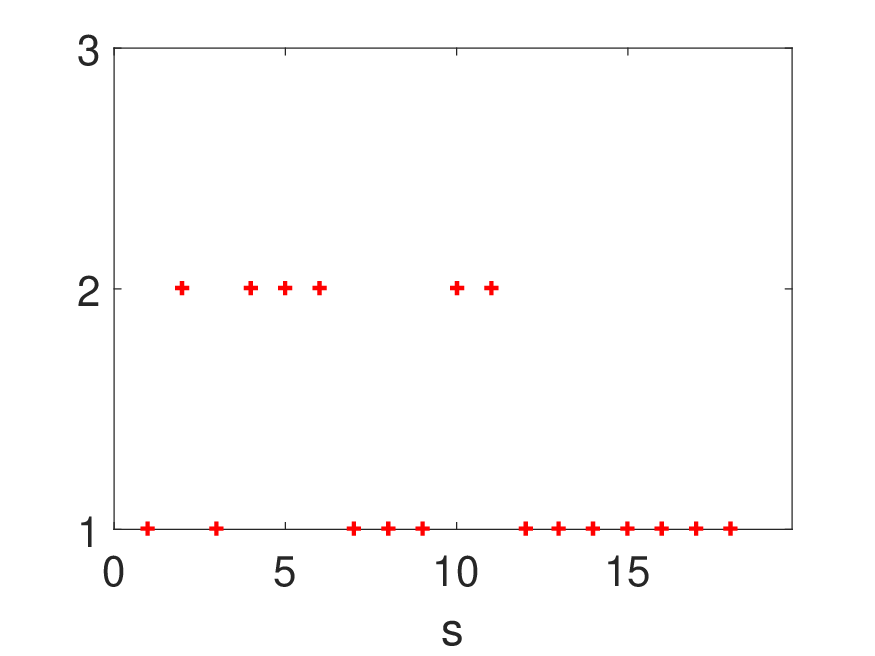}\\
     \multicolumn{2}{c}{(d) SCAD} & \multicolumn{2}{c}{(e) MCP}\\
    \includegraphics[trim = 0cm 0cm 0cm 0cm, clip=true,width=3.4cm]{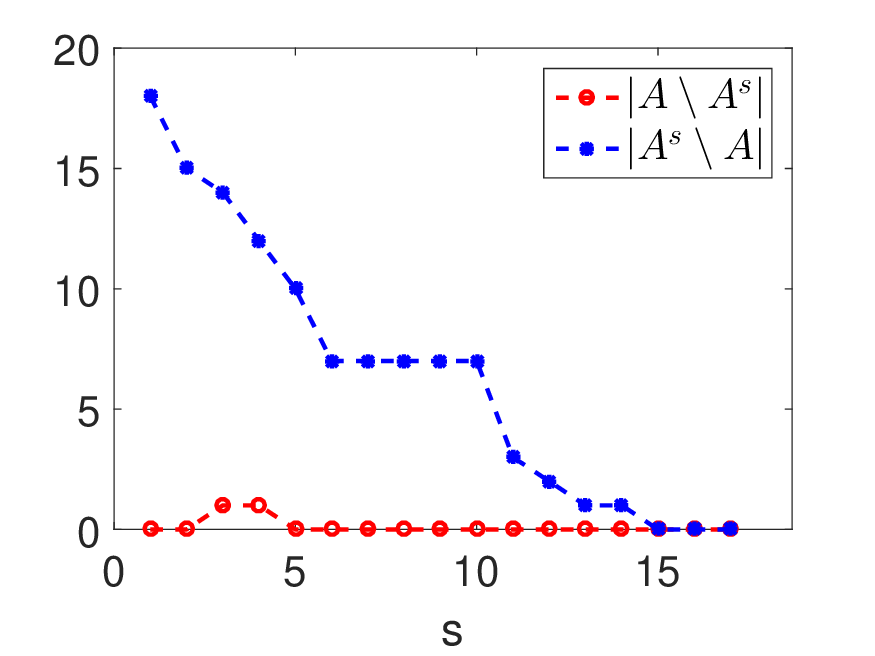} &\includegraphics[trim = 0cm 0cm 0cm 0cm, clip=true,width=3.4cm]{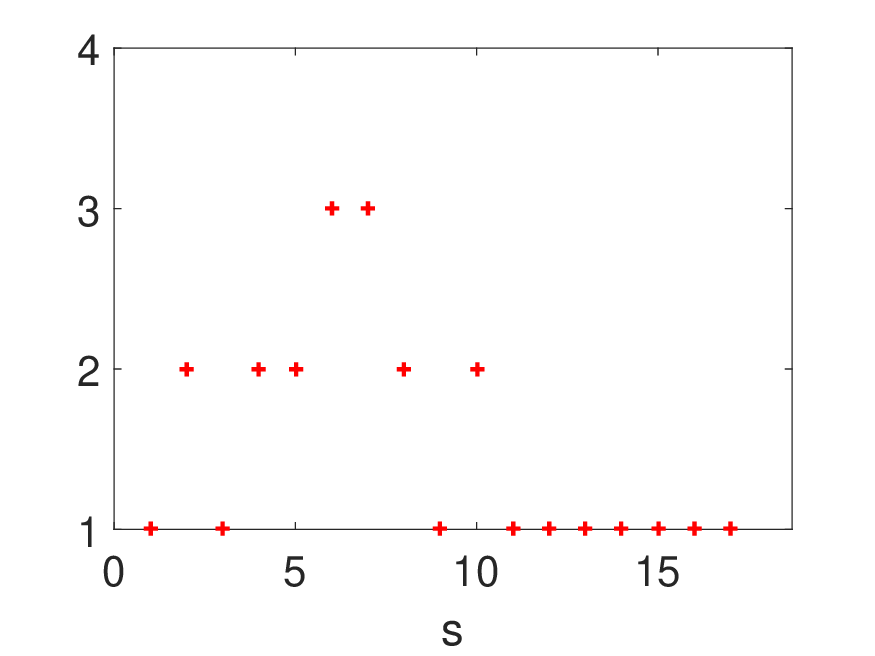} &\\
    \multicolumn{2}{c}{(c) capped-$\ell^1$} &
  \end{tabular}
  \caption{Convergence behavior of Algorithm \ref{alg:pdasc} for Example \ref{exam:continuation}: the variation of
  the active sets (left panel), and the number of iterations needed for each $\lambda_s$-problem (right panel).
  In the left panel, the vertical axis is the number of iterations, and in the right panel, the vertical axis is the
  size of the support. The horizontal axis in both panels is the index of the grid point in the solution path. }\label{fig:convsup}
\end{figure}

Four (bridge, capped $\ell^1$, SCAD and MCP) of the penalties in
Table \ref{tab:nonconvex} have a free parameter $\tau$ that controls
their concavity. Our next experiment examines the sensitivity of the
algorithm with respect to the concavity parameter $\tau$.

\begin{exam}\label{exam:sensitivity}
The matrix $\Psi\in \mathbb{R}^{400\times1000}$ is generated according to setting (i), and the true signal $x^{\dag}$
contains $10$ nonzero elements with  $M =100$.
\end{exam}

We evaluate Algorithm \ref{alg:pdasc} by CPU time (in seconds), relative error (in the $\ell^2$-norm) and
absolute $\ell^\infty$ error ($\|x_{\hat{\lambda}} - x^{\dag}\|_{\infty}$) computed from ten independent
realizations of the problem setup. The CPU time is fairly robust with respect to the concavity parameter
$\tau$, cf. Table \ref{tab:sensitivity}. Further, the reconstruction error varies little with the parameter
$\tau$, indicating the robustness of the penalized models. Interestingly, the reconstruction errors are
largely comparable across different penalties.

The consistency results in Theorem \ref{thm:convergence} employ the assumption that the matrix $\Psi$ satisfies the RIP assumption
\ref{assumption:1}, which in turn implies that there can only exist very weak correlations among the columns of
the matrix $\Psi$. The next experiment examines the robustness of UPDASC with respect to the correlation parameter $\mu $
in data generation setting (i).

\begin{exam}\label{exam:condition}
The matrix $\Psi\in \mathbb{R}^{300\times1000}$ is generated according to setting (i) with $\mu = 0.1:0.2:0.7$, and the true signal $x^{\dag}$
contains $10$ nonzero elements.
\end{exam}

\begin{table}[H]
  \caption{Results for Example \ref{exam:sensitivity}: sensitivity analysis.}
 \label{tab:sensitivity}
  \vspace{-0.3cm}
  \begin{center}
  \begin{tabular}{ccccccccc}
  \hline\hline
  \multicolumn{4}{c}{(a) bridge}
  &&\multicolumn{4}{c}{(b) capped-$\ell^1$}\\
  \cline{1-4}  \cline{6-9}
  $\tau$         &Time       & RE    & AE             &&  $\tau$   & Time       & RE    & AE  \\
   $0$           & 4.40e-2     &5.44e-4  &    4.62e-2 & &  $1.1$    &4.86e-2       & 5.01e-4       &    4.23e-2\\
   $0.2$         & 3.79e-2     &5.03e-4     & 4.26e-2 & &  $1.5$    &3.59e-2       & 5.06e-4       &    4.23e-2 \\
   $0.4$         & 3.47e-2     &5.54e-4     & 4.75e-2 & &  $5$      &3.70e-2       & 5.17e-4       &    4.23e-2  \\
   $0.6$         & 3.27e-2     &5.99e-4     & 4.85e-2 & &  $10$     &3.46e-2       & 5.25e-4       &    7.20e-2      \\
   $0.8$         & 2.88e-2     &8.44e-4     & 5.93e-2 & &  $30$     &3.58e-2       & 1.40e-3       &    1.11e-1   \\
  \hline
  \hline
  \multicolumn{4}{c}{(c) SCAD}
  &&\multicolumn{4}{c}{(d) MCP}\\
  \cline{1-4}  \cline{6-9}
   $\tau$        & Time       & RE    & AE       &&  $\tau$        & Time       & RE    & AE \\
   $2.1$         & 5.72e-2    & 5.03e-4 &  4.23e-2      &&  $1.1$         & 6.30e-2         &     5.03e-4       & 4.23e-2      \\
   $3.7$         & 4.70e-2    & 5.03e-4 &  4.23e-2      &&  $2.7$         & 5.07e-2         &     5.03e-4       & 4.23e-2     \\
   $5$           & 4.31e-2    & 5.03e-4 &  4.23e-2      &&  $5$           & 4.90e-2         &     5.03e-4       & 4.23e-2     \\
   $10$          & 4.33e-2    & 5.03e-4 &  4.23e-2      &&  $10$          & 4.87e-2         &     5.03e-4       & 4.24e-2       \\
   $30$          & 4.34e-2    & 8.66e-4 &  4.24e-2      &&  $30$          & 5.32e-2         &     8.87e-4       & 7.37e-2  \\
  \hline
  \hline
  \end{tabular}
  \end{center}
\end{table}

The consistency results in Theorem \ref{thm:convergence} rely on the assumption that $\Psi$ satisfies the RIP
i.e., Assumption \ref{assumption:1}, which in turn implies there can only be very weak correlations among the
columns of $\Psi$. The next experiment examines the robustness of UPDASC on  the
 correlation parameter $\mu $  in data generation setting (i).

\begin{exam}\label{exam:condition}
The matrix $\Psi\in \mathbb{R}^{300\times1000}$ is generated according to setting (i) with $\mu = 0.1:0.2:0.9$,
and the true signal $x^{\dag}$ contains $10$ nonzero elements.
\end{exam}

Like before, we evaluate Algorithm \ref{alg:pdasc} (i.e., UPDASC) by CPU time (in seconds), relative error (in the
$\ell^2$-norm), which are  computed from ten independent realizations of the problem setup. Table
\ref{tab:condition} presents both  CPU time (in second) and reconstruction error are fairly robust with respect to the
correlation  parameter $\mu$ ranging   from $0.1$ to $0.9$.

\begin{table}[H]
  \caption{Results for Example \ref{exam:condition}: robustness on $\mu$.}
 \label{tab:condition}
  \vspace{-0.3cm}
  \begin{center}
  \scalebox{0.9}{
  \begin{tabular}{ccccccccccccccc}
  \hline\hline
 \multicolumn{2}{c}{\quad   $\ell^0$}
  && \multicolumn{2}{c}{\quad   bridge }
  &&
  \multicolumn{2}{c}{\quad  capped-$\ell^1$} &&\multicolumn{2}{c}{ \quad SCAD} && \multicolumn{2}{c}{ \quad MCP}\\
  \cline{2-3}  \cline{5-6}   \cline{8-9} \cline{11-12}\cline{14-15}
  $\mu$       &Time         & RE                &   &Time         & RE                & & Time       & RE           && Time       & RE          && Time       & RE            \\
   $0.1$       &         2.80e-2     &6.00e-3      &    &      2.40e-2     &6.00e-3      & &    3.08e-2       &5.70e-3   &&    4.52e-2       &5.70e-3   &&  3.62e-2     &5.70e-3      \\
   $0.3$       &         1.88e-2     &5.00e-3      &    &      1.69e-2     &5.00e-3      & &    1.55e-2       &5.70e-3   &&    2.41e-2       &5.70e-3   &&  1.88e-2     &5.70e-3      \\
   $0.5$       &         1.88e-2     &5.00e-3      &    &      1.41e-2     &5.00e-3      & &    1.56e-2       &5.70e-3   &&    2.29e-2       &5.70e-3   &&  1.90e-2     &5.70e-3       \\
   $0.7$       &         1.75e-2     &6.02e-3      &    &      1.56e-2     &6.02e-3      & &    1.51e-2       &5.90e-3   &&    2.13e-2       &5.90e-3   &&  1.96e-2     &5.90e-3            \\
   $0.9$       &         1.95e-2     &6.63e-3      &    &      1.60e-2     &6.75e-3      & &    1.55e-2       &6.63e-3   &&    2.15e-2       &6.63e-3   &&  1.99e-2     &6.63e-3            \\
  \hline
  \hline \end{tabular}} \end{center}
\end{table}

Finally, we illustrate Algorithm \ref{alg:pdasc} on a genome-wide association study (GWAS) dataset.

\begin{exam}\label{exam:GWAS}
This test applies UPDASC to high-density lipoprotein (HDL) in NFBC1966 study~\cite{sabatti2009genome}. The NFBC1966 data set
contains information for 5,402 individuals with a selected list of phenotypic data including HDL and 364,590 single
nucleotide polymorphisms (SNPs). We perform strict quality control on data using PLINK~\cite{purcell2007plink}.
In the experiments, we exclude individuals having discrepancies between the reported sex and the sex determined from the X
chromosome, and exclude SNPs with a minor allele frequency less than 1$\%$, having missing values in more than 1$\%$
of the individuals or with a Hardy-Weinberg equilibrium p-value below 0.0001. After conducting strict quality control,
5,123 individuals with 9,114 SNPs in NFBC1966 on chromosome 16 are retained for the further analysis.
\end{exam}

The numbers of SNPs identified across $\ell^{0}$, $\ell^{1/2}$, SCAD, MCP and capped-$\ell^{1}$ are listed in Table~\ref{realdata3},
where the diagonals on the table are numbers of variables selected via different penalties and off diagonals are the
numbers of variables in the intersections of support sets determined across two different penalties. The solution pathes of different penalties
are presented in Fig. \ref{fig:GWAS}. Among all identified SNPs, rs3764261 and rs7499892 near gene {\it CETP} are
found to be associated with HDL in prior studies~\cite{willer2008newly,zemunik2009genome,wang2013cetp}.

\begin{table}[!ht]
  \caption{Results for Example \ref{exam:GWAS}. }
\label{realdata3}
\vspace{-0.3cm}
\begin{center}
	\begin{tabular}{cccccccc}
		\hline \hline
		         & $\ell^{0}$ & $\ell^{1/2}$ &SCAD & MCP & capped-$\ell^{1}$\\
		\hline
	   $\ell^{0}$  & 18 & 10 & 18 & 18 & 18\\
	   $\ell^{1/2}$ &   & 11 & 10 & 10 & 10\\
	   SCAD & & & 27 & 27 & 27 &\\
	   MCP & &  &  & 27 & 27\\
	   capped-$\ell^{1}$ & & & & &  27 \\
		\hline
	\end{tabular}
\end{center}
\end{table}

\begin{figure}[hbt!]
	\centering
   \begin{tabular}{cc}
   \includegraphics[width=0.43\textwidth]{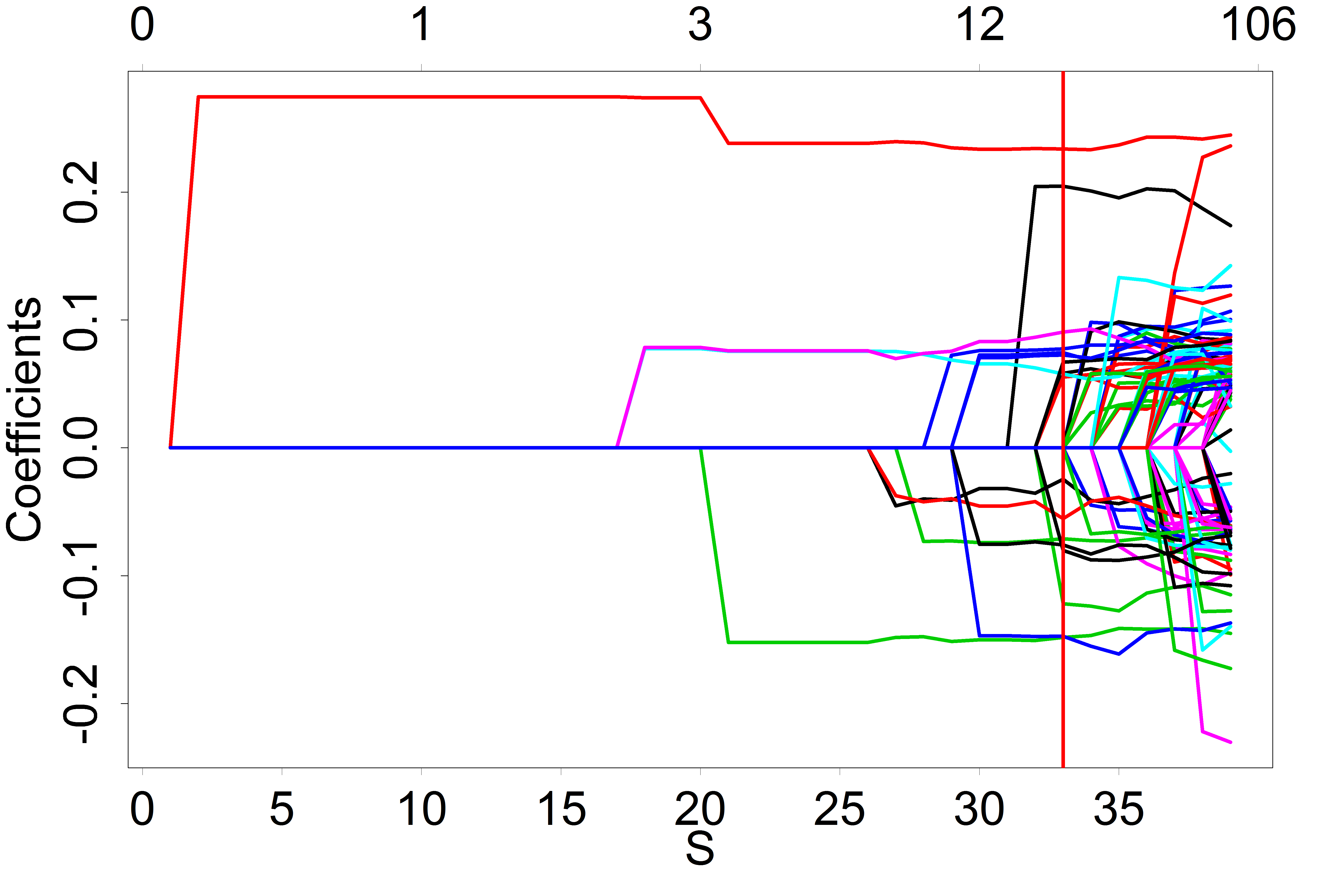} & \includegraphics[width=0.43\textwidth]{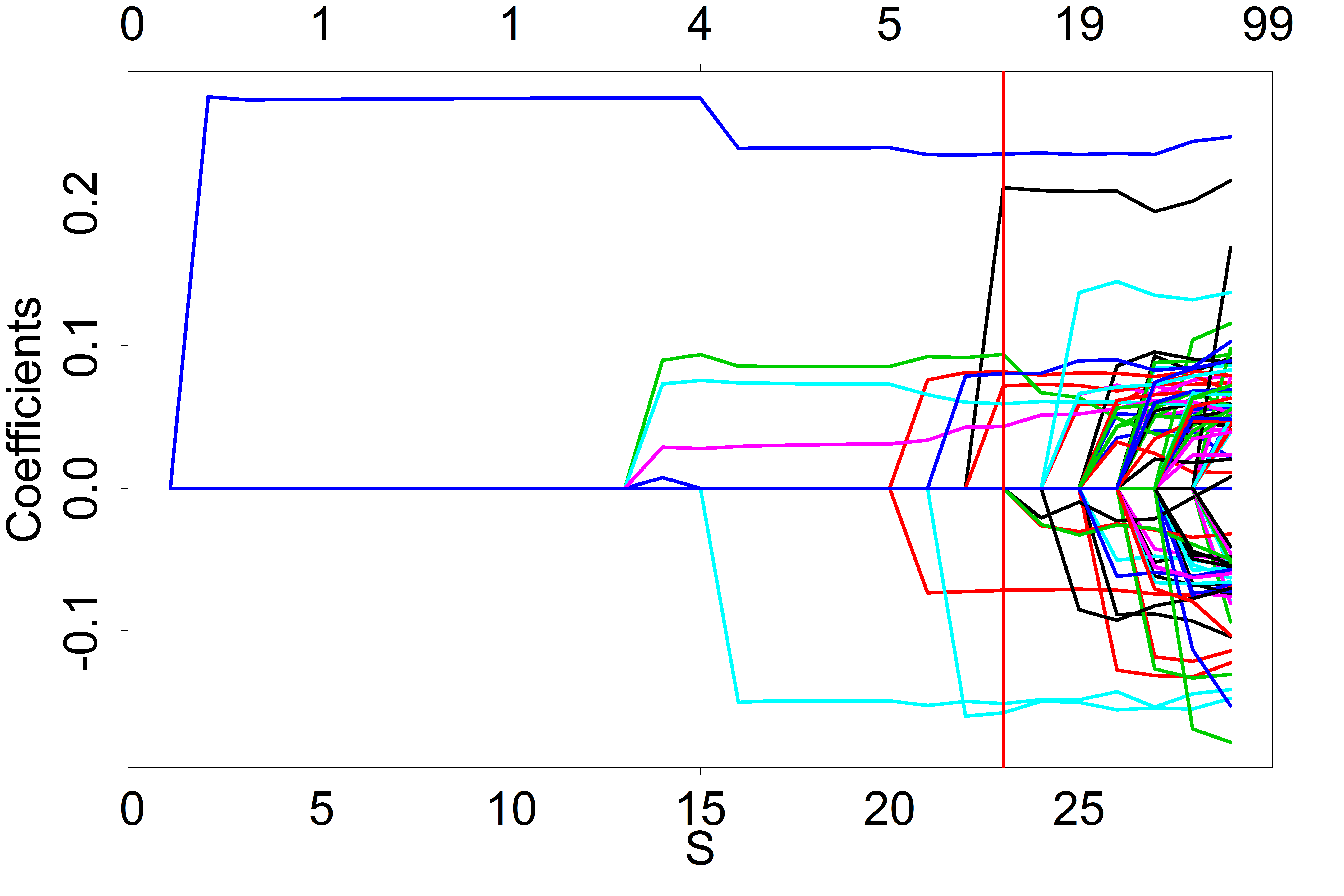}\\
    (a) $\ell^{0}$ & (b) $\ell^{1/2}$\\
   \includegraphics[width=0.43\textwidth]{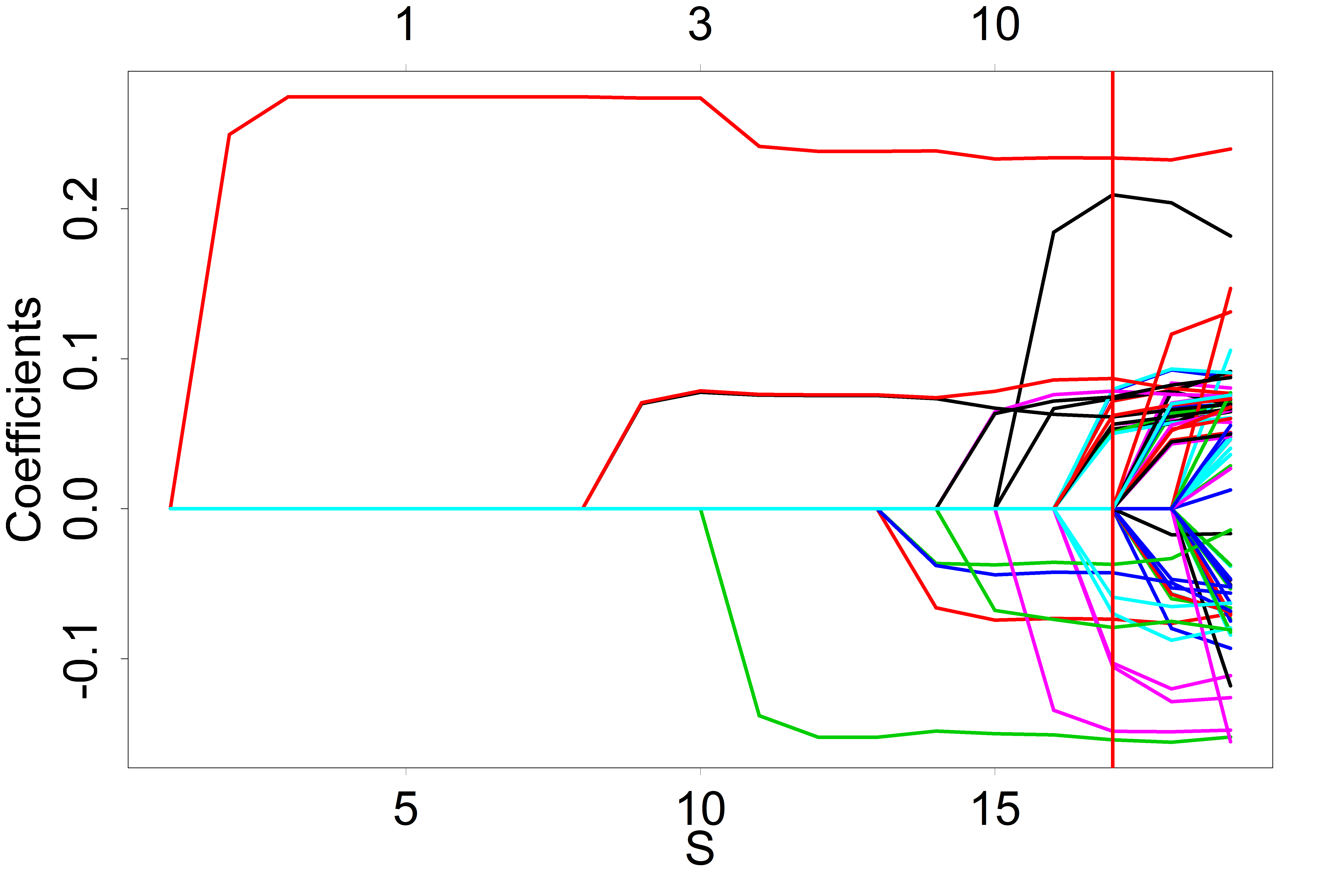} & \includegraphics[width=0.43\textwidth]{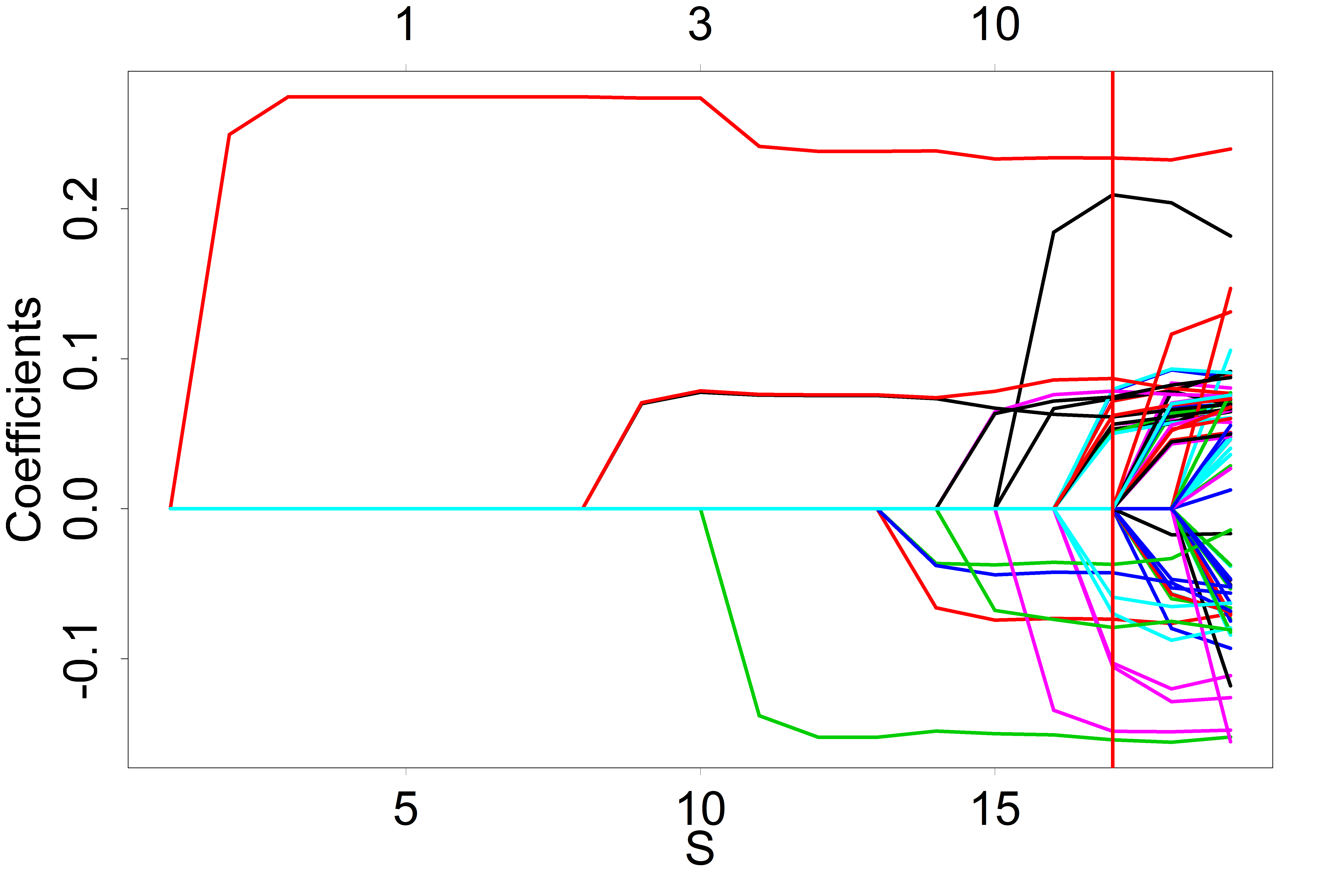}\\
   (c) SCAD & (d) MCP \\
   \includegraphics[width=0.43\textwidth]{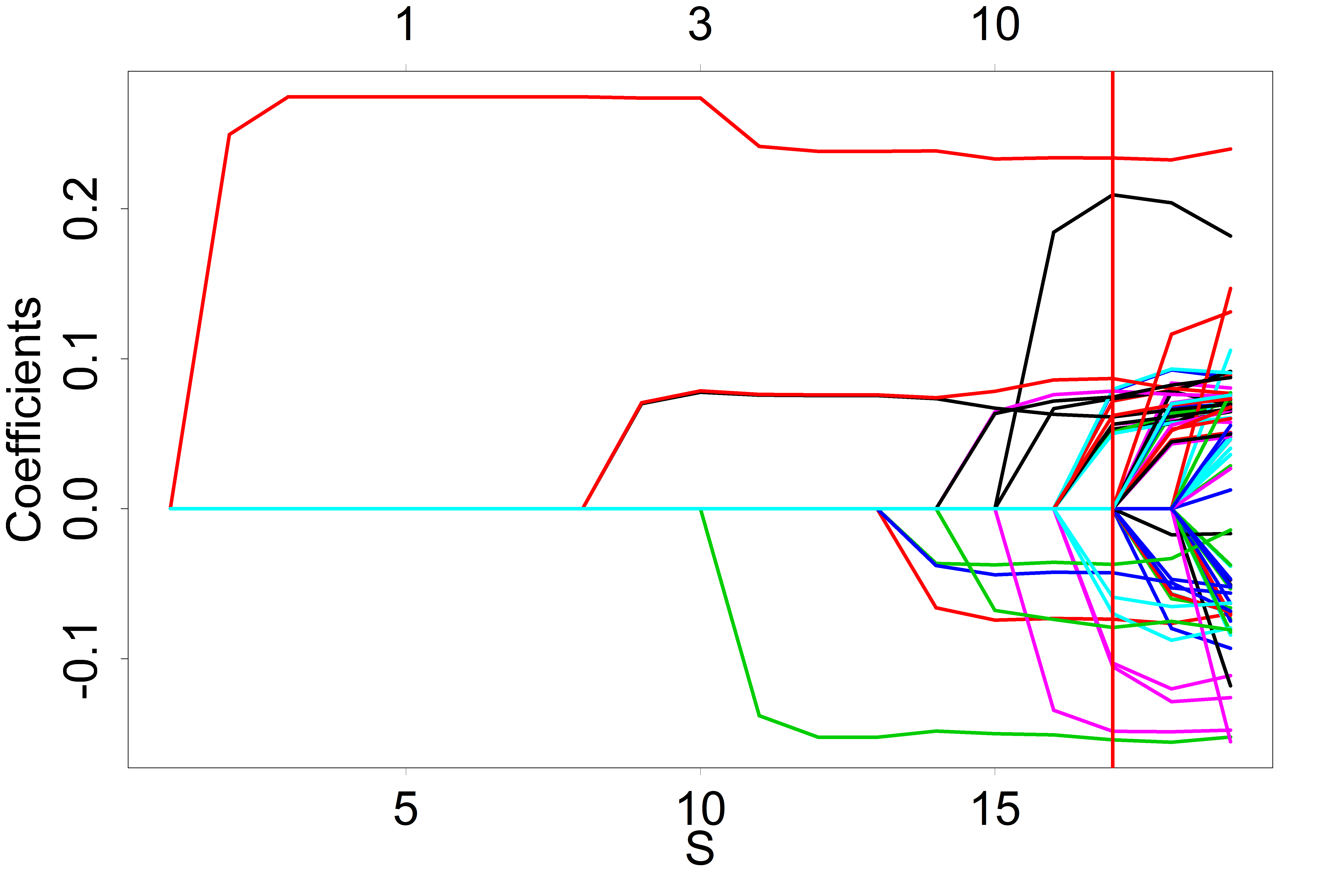}\\
   (e) capped-$\ell^{1}$
   \end{tabular}
	\caption{Solution path for example \ref{exam:GWAS} }\label{fig:GWAS}
\end{figure}

\section{Conclusions}

In this work, we have developed a unified PDAS algorithm for a class of popular nonconvex penalized regression
problems arising in high-dimensional statistics and sparse signal recovery, including the
$\ell^0$, bridge, capped-$\ell^1$, smoothly clipped absolute deviation and minimax concave
penalty. Theoretically, we established the existence of a global minimizer, and derived a necessary
optimality condition for a global minimizer, based on the associated thresholding operator.
The solutions to the necessary optimality condition are always coordinate-wise minimizers,
and further, we provided verifiable sufficient conditions for a coordinate-wise minimizer
to be a local minimizer. Meanwhile, the necessary optimality condition and the active set
can be reformulated using both primal and dual variables, which lends itself to a primal-dual
active set algorithm. One distinct feature of the algorithm is that at each iteration, it involves only solving a least-squares problem
on the active set, which is usually of much smaller size, and merits a local superlinear convergence, and thus when coupled with a
continuation strategy, the procedure is very efficient and accurate.  The global convergence
of the overall UPDASC was shown under suitable restricted isometry property on the design matrix.
The efficiency and accuracy of UPDASC combined with a new tuning parameter selection rule are
clearly demonstrated by extensive numerical experiments, including real data.

There are several avenues for further study. First, for very ill-conditioned design matrices, which
are characteristic of high-dimensional problems with highly correlated covariates, the linear
systems involved in the PDAS algorithm can be challenging to solve directly, and extra regularization
might be necessary. The extra regularization can be achieved by either penalization or early stopping.
This motivates further researches on related theoretical issues, especially stability and error estimates. Second, in some
practical applications, the design matrix $\Psi$ is only implicitly given where only matrix-vector multiplication
is available. This necessitates developing iterative linear solvers, and the study of inexact inner
iterations, e.g., especially convergence properties. Last, the extensions of the UPDASC algorithm to structured sparsity, e.g., group sparsity
penalty and the matrix analogues, are also of immense current interest.

\section*{Acknowledgements}

The authors are very grateful to the anonymous referee, the associate editor and the editor for their
helpful comments, which have led to a significant improvement on the quality of the paper.
The research of Y. Jiao is supported by National Science Foundation of  China Grant No. 11501579 and
National Science Foundation of  Hubei Province Grant No. 2016CFB486, and X. Lu is supported by National
Science Foundation of  China Grants Nos. 11471253 and 91630313. The research of J. Liu is supported by
Duke-NUS Graduate Medical School WBS: R-913-200-098-263 and MOE2016- T2-2-029 from Ministry of Eduction,
Singapore. The research of C. Yang is supported in part by grant No. 61501389 from National Science
Funding of China, grants No. 22302815, No. 12316116 and No. 12301417 from the Hong Kong Research Grant
Council, startup grant R9405 from The Hong Kong University of Science and Technology.

\bibliographystyle{abbrv}
\bibliography{pdas}

\newpage
\section*{Supplementary Materials}
\textbf{Appendix:} The supplementary materials contain the proofs of the theoretical results and one counterexample. 

\appendix
\section{Appendix} In the appendix, we prove Propositions  \ref{thm:Exist} and \ref{thm:thresholding} and Theorems  \ref{thm:localmin} and \ref{thm:convergence}.
and give one counterexample to shed light on the conditions  in Theorem  \ref{thm:localmin}.
\subsection{Proof of Proposition \ref{thm:Exist}}\label{app:Exist}
First, we show a technical lemma. 
\begin{lemma}\label{lem:1}
Let the function $\rho(t):\mathbb{R}\to\mathbb{R}^+\cup\{0\}$ satisfy:
\begin{itemize}
\item[$(\mathrm{i})$] $\rho$ is even with $\rho(0) = 0$, nondecreasing for
   $t\geq 0$, and lower semi-continuous.
\item[$(\mathrm{ii})$] $\rho(t)$ is a constant when $|t| \geq t_0$ for some $t_0>0$.
\end{itemize}
Then for any given subspace ${\cal N}\subset \mathbb{R}^p$, the following statements hold.
\begin{itemize}
\item[$(\mathrm{a})$] For any $x\in \mathcal{N}^{\perp}$, there exists an element $z\in\mathcal{N}$ such that
$\sum_{i=1}^p\rho(x_i+z_i) = \inf_{z\in{\cal N}} \sum_{i=1}^p\rho(x_i+z_i)$.
\item[$(\mathrm{b})$] Let $SE(x) = \mathop\mathrm{arg}\min_{z\in\cal N}\sum_{i=1}^p\rho(x_i+z_i)$. Then the function $h:{\cal N}^\perp\mapsto \mathbb{R}$, $h(x) = \inf_{z\in SE(x)} \|z\|$ maps a bounded set to a bounded set.
\end{itemize}
\end{lemma}
\begin{proof}
To show part (a), let $m$ be the dimension of $\cal N$, and $S\in \mathbb{R}^{p\times m}$
be a column orthonormal matrix whose columns form a basis of $\mathcal{N}$. We denote the
rows of $S$ by $\{\tilde{s}_i\}_{i=1}^p$.  
Then any $z\in\cal{N}$ can be written as $z = Sw$ for
some $w\in\mathbb{R}^m$. Let $\{w_k\}\subset\mathbb{R}^m$ be a minimizing sequence to $\inf_{z\in \cal N}
\sum_{i=1}^p\rho(x_i+z_i)$ under the representation $z=Sw$. We claim that there exists a $w \in\mathbb{R}^m$ such that
\begin{equation*}
  \sum_{i=1}^p\rho(x_i+ (S w)_i) \leq \lim_{k\rightarrow\infty}\sum_{i=1}^p\rho(x_i+(Sw_k)_i).
\end{equation*}

First, if there is a bounded subsequence of $\{w_k\}$, the existence
of a minimizer follows from the lower semicontinuity of $\rho$. Hence we assume that
$\|w_k\|\rightarrow \infty$ as $k\to\infty$. Then we check the scalar sequences $\{w_k\cdot
\tilde{s}_i\}_k$, $i\in\mathbb{I}\equiv\{1,...,p\}$. For any $i \in \mathbb{I}$,
if there is a bounded subsequence of $\{w_k\cdot \tilde{s}_i\}$, we may pass to a
convergent subsequence and relabel it to be the whole sequence. In this way, we divide
the index set $\mathbb{I}$ into two disjoint subsets $\mathbb{I}'$ and $\mathbb{I}''$ such that for every
$i\in \mathbb{I}'$, $w_k\cdot\tilde{s}_i$ converges, whereas for every $i\in \mathbb{I}''$,
$|w_k\cdot\tilde{s}_i|\rightarrow\infty$. Let $\mathcal{L} = \mathrm{span}\{\tilde{s}_i\}_{i\in
\mathbb{I}'}$, and decompose $w_k$ into $w_k = w_{k,\cal L} + w_{k,\cal L^\perp}$, with $w_{k,\cal L}
\in\mathcal{L}$ and $w_{k,\cal L^\perp}\in\mathcal{L}^\perp$.

If the index set $\mathbb{I}'$ is empty, then by the monotonicity of the function $\rho(t)$,
one can verify directly that the zero vector $0$ is a minimizer. Otherwise, by the definition of $\mathbb{I}'$ and the monotonicity of $\rho(t)$, for any
$i\in \mathbb{I}'$, $w_k\cdot\tilde{s}_i = w_{k,\cal L}\cdot\tilde{s}_i$, and for any $i\in
\mathbb{I}''$, $\limsup_{k\rightarrow\infty}\rho(x_i + w_{k,\mathcal{L}}\cdot\tilde{s}_i) \leq
\lim_{k\rightarrow\infty}\rho(x_i + w_k\cdot\tilde{s}_i)$. Hence $\{w_{k,\cal L}\}$ is also
a minimizing sequence. Next we prove that $\{w_{k,\cal L}\}$ is bounded.
To this end, we let $M$ be the submatrix of $S$, consisting of rows whose indices are
listed in $\mathbb{I}'$.
It follows from the definition of $w_{k,\cal L}$ and the convergence of $w_{k,\cal L}\cdot\tilde{s}_i$
for $i\in\mathbb{I}'$ that
$w_{k,\cal L} \in {\cal L} = \mbox{Range}(M^t)$ and $M w_{k,\cal L}$ is bounded.
Now let $M = U^t\Sigma V$ be the singular value decomposition of $M$. Then
for any $w\in \mbox{Range}(M^t)$, i.e., $w=M^tq$, there holds
\begin{equation*}
  \|Mw\|^2 = \|\Sigma\Sigma^t Uq\|^2 = \sum_{\sigma_i >0} \sigma_i^4 (Uq)_i^2 \geq
  \sigma_M^2\sum_{\sigma_i >0} \sigma_i^2 (Uq)_i^2 =  \sigma_M^2\|\Sigma^t Uq\|^2 = \sigma_M^2\|w\|^2,
\end{equation*}
where $\sigma_M$ is the smallest nonzero singular value of $M$. Hence the sequence
$\{w_{k,\cal L}\}$ is bounded, from which the existence
of a minimizer follows. This shows part (a).

By the construction in part (a), the set $SE(x)$ is nonempty, and by the lower semi-continuity of $\rho$, it
is closed. Hence, there exists an element $z(x)\in SE(x)$ such that $\|z(x)\| = \inf_{z\in
SE(x)} \|z\|$. We claim that the map $x\mapsto z(x)$ is bounded.
To this end, let $D=\|x\|_\infty$, and recall the representation $z = Sw$ and $(x+z)_i = x_i + w\cdot \tilde{s}_i$.
Denote by $\mathbb{I}'
= \{i\in \mathbb{I}: |w\cdot\tilde{s}_i| \leq D + t_0\}$, and let
${\cal L} = \mathrm{span}\{\tilde{s}_i\}_{i\in \mathbb{I}'}$, $w= w_{\cal L} + w_{\cal
L^\perp}$, with $w_\mathcal{L} \in {\cal L}$ and $w_{\cal L^\perp} \in{\cal L}^\perp$. Then
the argument in part (a) yields
\begin{equation*}
\left.\begin{array}{ll}\tilde{s}_i\cdot w_\mathcal{L} = \tilde{s}_i\cdot w & i\in\mathbb{I}'\\
\rho(x_i+ \tilde{s}_i\cdot w_\mathcal{L}) \leq \rho(t_0) = \rho(x_i + \tilde{s}_i\cdot w) & i\in\mathbb{I}\backslash\mathbb{I}' \end{array}\right\}\Rightarrow \tilde{z}(x) = Sw_{\mathcal{L}}(x)\in SE(x),
\end{equation*}
and
\begin{equation*}
  \|Sw_\mathcal{L}(x)\| \leq C_{\mathbb{I}'} (D+t_0),
\end{equation*}
where the constant $C_{\mathbb{I}'}$ depends only on the smallest nonzero singular
value of the submatrix whose rows are given by $\tilde{s}_i$, $i\in\mathbb{I}'$. Therefore,
\begin{equation*}
  \begin{aligned}
  \sup\limits_{\|x\|_{\infty} \leq D} \inf\limits_{z\in SE(x)} \|z\| & = \sup\limits_{\|x\|_{\infty} \leq D} \|{z}(x)\| \leq \sup\limits_{\|x\|_{\infty} \leq D} \|\tilde{z}(x)\| \\
   &= \sup\limits_{\|x\|_{\infty} \leq D}\|Sw_\mathcal{L}(x)\| \leq \sup\limits_{\mathbb{I}'} C_{\mathbb{I}'} (D+t_0).
  \end{aligned}
\end{equation*}
The factor $\sup_{\mathbb{I}'} C_{\mathbb{I}'}$ is over
finitely many numbers, which concludes the proof.
\end{proof}

\noindent
\textbf{Proof of Proposition  \ref{thm:Exist}}
\begin{proof}
We discuss the cases separately.

\noindent (i) bridge. The proof is straightforward due to the coercivity of the penalty.

%
\noindent(ii) $\ell^0$, capped-$\ell^1$, SCAD and MCP.  First, all these
penalties satisfy the assumptions in Lemma \ref{lem:1}. Let $\mathcal{N} = \mathrm{Ker}(\Psi)$,
then $\Psi$ is coercive over $\mathcal{N}^\perp$, and
$\mathrm{Ker}(\Psi)^\perp = \mathrm{Range}(\Psi^t)$.
Since the functional $J$ is bounded from below by zero, the infimum $\textrm{INF} = \inf J(x)$ exists
and it is finite; further, by the very definition of the infimum INF,
there exists a minimizing sequence, denoted by $\{x^k\}\subset\mathbb{R}^p$, to
\eqref{eqn:model}, i.e., $\lim_{k\rightarrow\infty}J(x^k) = \textrm{INF}$
\cite[Section 39, pp. 193]{GelfandFomin:1963}. We decomposed $x^k$ into $x^k =
P_\mathcal{N} x^k + P_{\mathcal{N}^\perp} x^k =: u^k +
v^k$, where $P_{\cal N}$ and $P_{{\cal N}^\perp}$ denote the orthogonal projection into
$\cal N$ and ${\cal N}^\perp$, respectively. By the construction
of the set $SE(v^k)$ in the proof of Lemma \ref{lem:1}, with the minimum-norm element $\tilde{u}^k\in SE(v^k)$ in
place of $u^k$, the sequence $\{v^k+\tilde{u}^k\}$ is still minimizing.
By the coercivity, $\{v^k\}$ is bounded, and
hence $\{\tilde{u}^k\}$ is also bounded by Lemma \ref{lem:1}(b). Upon passage to a convergent
subsequence, the lower semi-continuity of $J$ implies the existence of a minimizer.
\end{proof}

\subsection{Proof of Proposition \ref{thm:thresholding}}\label{app:thresholding}
We first prove Lemma \ref{thm:tT} and Lemma \ref{lem:proximal}.

\medskip\noindent
\textbf{Proof of Lemma \ref{thm:tT}}\label{app:tT}
\begin{proof}
We compute $(t^*,T^*)$ for the five penalties separately.

\noindent (i) $\ell^0$. $g(t) = \frac{t}{2} + \frac{\lambda}{t}$ for $t>0$ and $g(0) = +\infty$. Hence
   $t^* = \sqrt{2\lambda}$ and $T^* = g(t^*) = \sqrt{2\lambda}$.

\noindent (ii) bridge. $g(t) = \frac{t}{2} + \lambda t^{\tau-1}$ for $t>0$ and
   $g(0) = +\infty$. Direct computation gives $t^* = \left(2\lambda(1-\tau)
   \right)^{\frac{1}{2-\tau}}$, and $T^* = g(t^*) = (2-\tau)\left[2(1-\tau)
   \right]^{\frac{\tau-1}{2-\tau}} \lambda^{\frac{1}{2-\tau}}$.

\noindent (iii) capped-$\ell^1$. Then the function $g(t)$ is given by
     \begin{equation*}
        g(t) = \frac{t}{2} + \left\{\begin{array}{ll} \frac{\lambda^2 \tau}{t}, & t \geq \lambda\tau, \\[1ex]
        \lambda, &  0 \leq t \leq \lambda\tau. \end{array}\right.
     \end{equation*}
In the interval $[0,\lambda\tau]$, $0$ is the minimizer of $g(t)$ with a minimum value
$\lambda$, whereas in the interval $[\lambda\tau,\infty)$, the minimum value is $\lambda\sqrt{2\tau}$, which is greater
than $\lambda$. Hence $t^* = 0$, and $T^* = \lambda$.

\noindent (iv) SCAD. Then the function $g(t)$ is given by
\begin{equation*}
   g(t) = \frac{t}{2} + \left\{\begin{array}{ll} \frac{\lambda^2 (\tau +1 )}{2t}, & t \geq \lambda\tau, \\[1.2ex]
   \frac{\lambda\tau t - \frac{1}{2}(t^2 + \lambda^2)}{(\tau-1)t},  & \lambda \leq t\leq \lambda\tau, \\[1.2ex]
   \lambda, & 0\leq t \leq \lambda. \end{array}\right.
\end{equation*}
It can be verified directly that the minimizer of $g(t)$ in the intervals $[0,\lambda]$, $[\lambda,\lambda\tau]$,
$[\lambda\tau,\infty)$ is given by $0$, $\lambda$, $\lambda\sqrt{\tau+1}$, respectively. Hence $t^*
= 0$, and $T^* = \lambda$.

\noindent(v) MCP. Then the function $g(t)$ is given by
\begin{equation*}
   g(t) =  \frac{t}{2} + \left\{\begin{array}{ll}\frac{\lambda^2\tau}{2t}, & t \geq \lambda\tau, \\[1.2ex]
   \lambda -\frac{t}{2\tau}, & 0 \leq t \leq \lambda\tau. \end{array}\right.
\end{equation*}
Analogous to the case of the SCAD, we can obtain $t^* = 0$, and $T^* = \lambda$.
\end{proof}

\medskip\noindent
\textbf{Proof of Lemma \ref{lem:proximal}}
\begin{proof}
By the lower-semicontinuity and coercivity of the function ${(u-v)^2}/{2} + \rho(u)$,
it has at least one minimizer. Next one observes that
\begin{align*}
u^\ast  \in \mathop\textrm{argmin}_{u\in \mathbb{R}} \left({(u-v)^2}/{2} + \rho(u)\right)\quad
\Leftrightarrow\quad  u^\ast  \in \mathop\textrm{argmin}_{u\in \mathbb{R}} \left({u^2}/{2} - uv + \rho(u) \right).
\end{align*}
First, if $u^\ast=0$, then for any $u\neq 0$, ${u^2}/{2} - uv + \rho(u) = u (g(u) - v)$,
which implies that $u$ and $g(u)-v$ have the same sign. That is,
\begin{equation*}
 u > 0 \Rightarrow g(u) - v \geq 0, \quad \forall u>0, \quad \mbox{ then }
v \leq \inf\limits_{u > 0} g(u)= T^{*},
\end{equation*}
and
\begin{equation*}
 u < 0 \Rightarrow  g(u) - v \leq 0, \quad\forall u<0, \quad\mbox{ then }
 -v \leq \inf\limits_{ u < 0} -g(u) = \inf\limits_{ u < 0} g(-u)= T^{*}.
\end{equation*}
From these observations it follows that $|v|\leq T^{*}$. This shows assertion (a).
Second, let $G(u)=u (g(u) - v)$ for $u\neq 0$ and $G(0)=0$. For $|v|<T^{*}$, since
$$u>0\Rightarrow g(u)
\geq T^* > v \quad \mbox{and}\quad u< 0 \Rightarrow g(u) = - g(-u) \leq -T^* < v,$$
then $G(u)>0$ when $u\neq 0$,
which implies $0$ is the only minimizer. This shows (b).
Last, for $|v|=T^{*}$, by arguing analogously to (b) for $u>0$ and $u<0$, we have
$G(u)\geq 0$. Then $u^\ast$ satisfies that $G(u^\ast)=0$, i.e., $u^\ast=0$  or $g(u^\ast)=\sgn(v) T^{*}$.
\end{proof}

Now we can state the proof of Proposition \ref{thm:thresholding}.
\begin{proof}
We discuss only the case $v>0$, for which
$u^* \geq 0$. The case $v<0$ can be treated similarly.

\noindent (i) $\ell^0$. By Lemma \ref{lem:proximal}, if $|v| > T^*$, then $u^*\neq 0$ which implies
the minimizer $u^*$ is $ v$, from which the formula of $S_{\lambda}^{\ell^0}$ follows (see also \cite{ItoKunisch:2011}).

\noindent(ii) bridge. Let $G(u) = \frac{u^2}{2} + \lambda u^\tau - uv $ for $u\geq 0$.
Its first- and second derivatives are given by
\begin{equation*}
G'(u) = u + \lambda\tau u^{\tau -1} - v\quad\mbox{and} \quad G''(u)= 1 + \lambda \tau (\tau -1) u^{\tau -2}.
\end{equation*}
Clearly, $G'(u)$ is convex with $G'(0+) = G'(+\infty) = +\infty$.  Hence, $G'(u)$ has
at most two real roots, and $G(u)$ is either monotonically increasing or has three
monotone intervals.
This and Lemma \ref{lem:proximal} yield the expression $S_{\lambda,\tau
}^{\ell^\tau}$. Generally there is no closed-form expression for $S^{\ell^\tau}_{
\lambda,\tau}(v)$. For $|v|>T^*$, the unique minimizer to $G(u)$
is the larger root of $G'(u)$ (the other root is a local maximizer) (see also \cite{ItoKunisch:2011,GongZhangLuHuangYe:2013}).

\noindent(iii) capped-$\ell^1$. Let
\begin{equation*}
    G(u) = \left\{\begin{array}{ll} \frac{u^2}{2} - uv + \lambda^2 \tau, &u\geq \lambda\tau, \\[1.2ex]
     \frac{u^2}{2} - uv + \lambda u, & 0\leq u \leq \lambda\tau. \end{array}\right.
\end{equation*}
By Lemma \ref{lem:proximal}, for $|v|\leq \lambda$, we have $u^* = 0$.
We then assume $v > \lambda$. Simple computation shows
\begin{equation*}
  \begin{aligned}
    S^*_1 &:= \min_{u\geq\lambda\tau} G(u) = \lambda^2\tau - {v^2}/{2} \mbox{ at } u = v,\\
    S_2^* &:= \min_{u\in[0,\lambda\tau]}G(u) = -{(v - \lambda)^2}/{2} \mbox{ at } u = v-\lambda.
  \end{aligned}
\end{equation*}
Then we have
\begin{equation*}
   \left\{\begin{array}{ll}
     v > \lambda(\tau + \frac{1}{2}) \Rightarrow S_1^* < S_2^*, & u^* = v > \lambda\tau,\\[1.2ex]
     v < \lambda(\tau + \frac{1}{2}) \Rightarrow S_1^* > S_2^*, & u^* = v-\lambda < \lambda\tau,\\[1.2ex]
     v = \lambda(\tau + \frac{1}{2}) \Rightarrow S_1^* = S_2^*, & u^* = \lambda\tau \pm \frac{\lambda}{2},
   \end{array}\right.
\end{equation*}
whence follows the thresholding operator $S^{c\ell^1}_{\lambda,\tau}$ (see also \cite{GongZhangLuHuangYe:2013}).

\noindent(iv) SCAD. We define
\begin{equation*}
    G(u)  = \left\{\begin{array}{ll}
       G_1(u) \triangleq \frac{u^2}{2} - uv + \frac{\lambda^2 (\tau +1 )}{2}, &u \geq \lambda\tau, \\[1.3ex]
       G_2(u) \triangleq \frac{u^2}{2} - uv + \frac{\lambda\tau u - \frac{1}{2}(u^2 + \lambda^2)}{\tau-1},  & \lambda \leq u \leq \lambda\tau,\\[1.3ex]
       G_3(u) \triangleq \frac{u^2}{2} - uv + \lambda u, & 0\leq u\leq \lambda.
    \end{array}\right.
\end{equation*}
By Lemma \ref{lem:proximal}, $|v|\leq \lambda \Rightarrow u^* = 0$. We then assume $v > \lambda$.
The three quadratic functions $G_i(u)$ achieve their minimum at $u=v$,
$ u = \frac{(\tau-1)v - \lambda\tau}{\tau -2}$ and $u=v-\lambda$, respectively. Next we discuss
the three cases separately. First, if $v\geq \lambda\tau$, then $\frac{(\tau-1)v - \lambda\tau}{\tau -2}
\geq \lambda\tau$, which implies that $G_2(u)$ is decreasing on the interval $[\lambda,
\lambda\tau]$, it reaches its minimum at $\lambda\tau$. Similarly, $v-\lambda \geq \lambda$
implies that $G_3(u)$ reaches its minimum over the interval $[0,\lambda]$ at $\lambda$. Hence
\begin{equation*}
   \begin{aligned}
     \min\limits_{0\leq u\leq \lambda}G_3(u) & = G_3(\lambda) = G_2(\lambda)
      \geq \min\limits_{\lambda \leq u\leq\lambda\tau}G_2(u) = G_2(\lambda\tau) = G_1(\lambda\tau)
      \geq\min\limits_{u\geq \lambda\tau}G_1(u).
   \end{aligned}
\end{equation*}
Second, if $\lambda\tau \geq v\geq 2\lambda$, then $G_1$ is increasing on
$[\lambda\tau,\infty)$ and $G_3$ is decreasing on $[0,\lambda]$, and $\frac{(\tau-1)v
- \lambda\tau}{\tau -2} \geq \lambda\tau \in [\lambda,\lambda\tau]$. Hence
\begin{equation*}
   \begin{aligned}
      \min_{0\leq u\leq \lambda}G_3(u)=G_3(\lambda) = G_2(\lambda)\geq \min_{\lambda \leq u\leq\lambda\tau}G_2(u),\\
      \min_{\lambda \leq u\leq\lambda\tau}G_2(u) \leq G_2(\lambda\tau) = G_1(\lambda\tau)  = \min_{u\geq \lambda\tau}G_1(u).
   \end{aligned}
\end{equation*}
Third, if $2\lambda \geq v \geq \lambda$, similar argument gives that
\begin{equation*}
   \begin{aligned}
     \min\limits_{0\leq u\leq \lambda}G_3(u) & \leq G_3(\lambda) = G_2(\lambda)
      =\min\limits_{\lambda \leq u\leq\lambda\tau}G_2(u) \leq G_2(\lambda\tau)
      = G_1(\lambda\tau) = \min\limits_{u\geq \lambda\tau}G_1(u).
   \end{aligned}
\end{equation*}
This yields the thresholding operator $S_{\lambda,\tau}^\mathrm{scad}$ (see also
\cite{BrehenyHuang:2011,MazumderFriedmanHastie:2011,GongZhangLuHuangYe:2013}).

\noindent(v) MCP. Like before, we let
\begin{equation*}
   G(u)  = \left\{\begin{array}{ll}
     \frac{u^2}{2} - uv + \frac{1}{2}\lambda^2\tau, & u \geq \lambda\tau, \\[1.3ex]
     \frac{u^2}{2} - uv +\lambda u -\frac{u^2}{2\tau}, & 0\leq u \leq \lambda\tau.
   \end{array}\right.
\end{equation*}
Similar to case (iv), we obtain the expression for $S_{\lambda,
\tau}^\mathrm{mcp}$ (see also \cite{BrehenyHuang:2011,MazumderFriedmanHastie:2011,GongZhangLuHuangYe:2013}).
\end{proof}

\subsection{Proof of Theorem \ref{thm:localmin}}\label{app:localmin}

\begin{proof}
We prove Theorem \ref{thm:localmin} by establishing the inequality
\begin{equation}\label{eqn:ineqJ}
  J(x^*+\omega)\geq J(x^*)
\end{equation}
for small $\omega\in\mathbb{R}^p$, using the optimality condition and thresholding operator.

  \noindent (i) $\ell^0$. By Lemma \ref{lem:necopt} and using the thresholding operator $S_\lambda^{\ell^0}$,
we deduce that for $i\in{\cal A}$, $|x_i^*|\geq \sqrt{2\lambda}$. Further,
\begin{equation}\label{eqn:l0nec}
    0 = d^*_{\calA} = \Psi_{\calA}^t (y - \Psi_{\cal A} x^*_{\cal A})\quad
    \Leftrightarrow\quad x^*_{\calA} \in \mathop\textrm{argmin} \tfrac{1}{2}\|\Psi_{\calA} x_{\calA} - y\|^2.
\end{equation}
Now consider a small perturbation $\omega$, with $\|\omega\|_\infty<\sqrt{2\lambda}$, to
$x^*$. It suffices to show \eqref{eqn:ineqJ} for small $\omega$. Recall that $\omega_{\cal I}$ is
the subvector of $\omega$ whose entries are listed in the index set $\cal I$. If
$\omega_\mathcal{I}\neq 0$, then
\begin{equation*}
   \begin{aligned}
    J(x^* + \omega) - J(x^*) &\geq \tfrac{1}{2}\|\Psi x^* - y + \Psi \omega\|^2 -\tfrac{1}{2}\|\Psi x^* - y \|^2  + \lambda
      \geq \lambda - |(\omega, d^\ast )|,
   \end{aligned}
\end{equation*}
which is positive for small $\omega$. Meanwhile, if $\omega_{\calI} = 0$,
by \eqref{eqn:l0nec}, we deduce \eqref{eqn:ineqJ}.

\noindent(ii) bridge.
First note that on the active set $\calA$, $|x_i^*|\geq t^*=\left(2\lambda(1-\tau)
\right)^{\frac{1}{2-\tau}}$. Next we claim that if the minimizer $u^*$ of
$G(u) = \tfrac{u^2}{2} - uv + \lambda u^\tau$ is positive, then $G(u)$ is locally strictly
convex around $u^*$, i.e., for small $t$ and some $\theta>0$ such that
\begin{equation*}
   G(u^*+t) - G(u^*) =  G(u^* + t) - G(u^*) - G'(u^*)t  \geq \theta t^2.
\end{equation*}
To see this, we recall that $u^*$ is the larger root of $u+\lambda \tau u^{\tau-1}=v$ and
$v\geq T^*$. By the convexity of $u+\lambda \tau u^{\tau-1}$, $u^*(v)$ is
increasing in $v$ for $v \geq T^*$. Further, by the inequality
$u^*\geq t^*$, we have
\begin{equation*}
  \begin{aligned}
     G''(u^*) &= 1 - \lambda\tau(1-\tau) (u^*)^{\tau -2} \\
     & \geq 1 - \lambda\tau(1-\tau) (t^*)^{\tau -2} = 1 - \tfrac{\tau}{2}.
  \end{aligned}
\end{equation*}
In particular, the function $G(u)$ is locally strictly convex with $\theta=\frac{1}{2}
-\frac{\tau}{4} - \epsilon$, for any $\epsilon>0$. Hence for each $i\in\calA$ and small $t$, there holds
\begin{equation*}
   \begin{aligned}
    J(x^* + t e_i) - J(x^*) 
             & = \tfrac{1}{2}t^2 + (t\psi_i,\Psi x^* - y) + \lambda |x^*_i + t|^\tau - \lambda |x^*_i|^\tau \geq \theta t^2,
   \end{aligned}
\end{equation*}
i.e.,
\begin{equation*}
   -td_i^* + \lambda |x_i^* + t|^\tau - \lambda |x_i^*|^\tau \geq (\theta-\tfrac{1}{2}) t^2.
\end{equation*}
Consequently for small $\omega$, we have
\begin{equation*}
  \begin{aligned}
    J(x^* + \omega) - J(x^*)& = \tfrac{1}{2}\|\Psi \omega\|^2 - (\omega, d^*) + \sum_{i\in\calA}\lambda (|x_i^* + \omega_i|^\tau-|x_i^*|^\tau) + \lambda\sum_{i\in \calI} |\omega_i|^\tau \\
    &\geq  \tfrac{1}{2}\|\Psi \omega\|^2 -(\omega_{\cal I}, d^*_{\calI}) + \lambda\sum_{i\in \calI} |\omega_i|^\tau + (\theta - \tfrac{1}{2})\|\omega_{\calA}\|^2.
  \end{aligned}
\end{equation*}
Note the trivial estimates
\begin{equation*}
   -(\omega_{\calI}, d^*_{\calI}) \geq -\sum_{i\in\calI}|\omega_i|\|d^*_{\mathcal{I}}\|_\infty
\quad\mbox{and}\quad
     \tfrac{1}{2}\|\Psi \omega\|^2 
      \geq \tfrac{1}{2}\|\Psi_{\calA}\omega_{\cal A}\|^2 + (\omega_{\calA},\Psi^t_{\calA}\Psi_{\calI}\omega_{\calI}).
\end{equation*}
Further, by Young's inequality, for any $\delta>0$
\begin{equation*}
   (\omega_{\calA},\Psi^t_{\calA}\Psi_{\calI}\omega_{\calI})\geq -\delta \|\omega_{\calA}\|^2 - \tfrac{1}{4\delta}\|\Psi^t_{\calA}\Psi_{\calI}\omega_{\calI}\|^2\geq -\delta \|\omega_{\calA}\|^2 - C_\delta\|\omega_{\calI}\|^2.
\end{equation*}
Combing these four estimates together and noting $\theta=\frac{1}{2} - \frac{\tau}{4} - \epsilon$ yields
\begin{equation*}
  \begin{aligned}
    J(x^*+\omega)- J(x^*)&\geq  \left(\tfrac{1}{2}\|\Psi_{\calA}\omega_{\calA}\|^2 - (\tfrac{\tau}{4}
    + \epsilon + \delta)\|\omega_{\calA}\|^2\right) \\
      & \quad + \sum_{i\in \calI} |\omega_i|^\tau \left(\lambda-|\omega_i|^{1-\tau}\|d^*_{\mathcal{I}}\|_{\infty}-C_\delta|\omega_i|^{2-\tau}\right).
  \end{aligned}
\end{equation*}
The first term is nonnegative if $\epsilon$ and $\delta$ are small and
$\Psi$ satisfies \eqref{eqn:sec} with $\sigma(\mathcal{A})> \frac{\tau}{2}$.
The sum over $\calI$ is  nonnegative for small $\omega$, thereby showing \eqref{eqn:ineqJ}.\\

The proof of the rest cases is based on the identity
\begin{equation}\label{eqn:difJ}
    J(x^*+\omega)-J(x^*)= \tfrac{1}{2}\|\Psi \omega\|^2+\sum_i\underbrace{(\rho_{\lambda,\tau}(x_i^*+\omega_i)-\rho_{\lambda,\tau}(x_i^*)- \omega_id^*_i)}_{:=s_i}.
\end{equation}

\noindent(iii) capped-$\ell^1$.
We denote by ${\calA}_1 =\{i:|x_i^*| > \lambda\tau \}$, ${\calA}_2=\{i:\lambda\tau >
|x_i^*| > 0 \}$. By assumption $\{i: |x_i^*|=\lambda\tau\}=\emptyset$, hence
${\calI}=({\cal A}_1\cup{\cal A}_2)^c$. The optimality condition for $x^*$ and the
differentiability of $\rho_{\lambda,\tau}^{c\ell^1}(t)$ for $|t|\neq\lambda\tau$ yield
$d_i^*=0 $ for $i\in{\calA}_1$, and $d_i^*=\lambda\sgn(x_i^*)$
for $i\in {\calA}_2$. Thus, for $\omega$ small, there holds
\begin{equation*}
  s_i = \left\{\begin{array}{ll}
    0, & i\in{\calA}_1\cup\calA_2,\\ 
    \lambda |\omega_i|-\omega_id_i^*, & i\in\calI.
  \end{array}\right.
\end{equation*}
Now with the fact that for $i\in \calI$, $|d^*_i|\leq \lambda$, we deduce that
for small $\omega$, \eqref{eqn:ineqJ} holds. 

\noindent (iv) SCAD. 
Let $\calA_1=\{i: |x_i^*| > \lambda\tau\}$, $\calA_2= \{i: |x_i^*| \in [\lambda,\lambda\tau]\}$,
$\calA_3= \{i: |x_i^*|\in (0,\lambda)\}$, and $\calI =  (\cup\mathcal{A}_i)^c$.
Then 
the optimality of $x^*$ yields
\begin{equation*}
    d_i^*=\left\{\begin{array}{ll}
      0,& i\in \calA_1,\\ 
     \tfrac{\lambda\sgn(x_i^*)-x_i^*}{\tau-1}, & i\in \calA_2,\\ 
     \lambda\sgn(x_i^*), & i\in\calA_3,
  \end{array}\right.
\end{equation*}
and $|d_i^*|\leq\lambda$ on $\calI$. Then for small $\omega$
in the sense that for
$$i\in{\cal A}_{1}\Rightarrow |x_i^*+\omega_i| > \lambda\tau \quad \mbox{ and }\quad i\in{\cal A}_{3}\Rightarrow |x_i^*+\omega_i| \in (0, \lambda),$$
we obtain $ s_i = 0$, $ i\in\calA_1\cup\calA_3 $. For $i\in \calA_2$, we have two cases:
\begin{equation*}
    s_i\left\{\begin{array}{ll}
      =-\tfrac{w_i^2}{2(\tau-1)},&\quad\mbox{if }|x_i^* + \omega_i|\in[\lambda,\tau\lambda],\\
      \geq-\tfrac{w_i^2}{2(\tau-1)},& \quad \mbox{otherwise. } 
      \end{array}\right.
\end{equation*}
Finally for $i\in \calI$, $|d_i^*|<\lambda$ by assumption, and hence
\begin{equation*}
  s_i = \lambda|w_i|-d_i^*w_i \geq |w_i|(\lambda-|d_i^*|).
\end{equation*}
Combining these estimates with \eqref{eqn:difJ}, we arrive at
\begin{equation*}
  \begin{aligned}
    J(x^* + \omega) - J(x^*)
    &\geq  \tfrac{1}{2}\|\Psi\omega\|^2 - \tfrac{1}{2(\tau -1)}\|\omega_{\calA_2}\|^2+\sum_{i\in\calI}|\omega_i|(\lambda-|d_i^*|),
  \end{aligned}
\end{equation*}
Further, by Young's inequality, we bound
\begin{equation*}
  \begin{aligned}
    \tfrac{1}{2}\|\Psi\omega\|^2 
      \geq \tfrac{1}{2} \|\Psi_\calA\omega_\calA\|^2+ (\omega_\calA,\Psi_\calA^t\Psi_I\omega_I) 
      \geq (\tfrac{1}{2}-\epsilon)\|\Psi_\calA\omega_\calA\|^2 -C_\epsilon\|\omega_\calI\|^2.
  \end{aligned}
\end{equation*}
Consequently, there holds
\begin{equation*}
  J(x^*+\omega)-J(x^*)\geq (\tfrac{1}{2}-\epsilon)\|\Psi_\calA\omega_\calA\|^2-\tfrac{1}{2(\tau-1)}\|\omega_{\calA_2}\|^2 - \sum_{i\in\calI}
  \left(|\omega_i|(\lambda-|d_i^*|-C_\epsilon|\omega_i|\right).
\end{equation*}
If \eqref{eqn:sec} with $\sigma(\mathcal{A})>\frac{1}{\tau-1}$ and
$\|d^*_\mathcal{I}\|_{\infty} < \lambda$ hold, then \eqref{eqn:ineqJ} follows.

\noindent(v) MCP. The proof is similar to case (iv). We let $\calA_1=\{i:
|x_i^*|>\tau\lambda\}$, $\calA_2=\{i: 0<|x_i^*|\leq\tau\lambda\}$,
and $\calI = (\cup\calA_i)^c$. The differentiability of $\rho_{\lambda,
\tau}^\mathrm{mcp}(t)$ yields
\begin{equation*}
  d_i^* = \left\{\begin{array}{ll}
    0, & i\in\calA_1,\\ 
    \lambda\sgn(x_i^*)-x_i^*/\tau, & i\in\calA_2,
  \end{array}\right.
\end{equation*}
and on the set $\calI$, $|d_i^*|\leq\lambda$. Note that for small $\omega_i$,
there holds $s_i  =0$ for $i\in \calA_1$. Similarly, for
$i\in\calA_2$, there holds
\begin{equation*}
  s_i \geq  \left\{\begin{array}{ll} \tfrac{1}{2\tau}|\omega_i|^2,&i\in\calA_2,\\ 
    |\omega_i|(\lambda-|d_i^\ast|), & i\in\calI,
   \end{array}\right.
\end{equation*}
The rest of the proof is identical with case (iv), and hence omitted.
\end{proof}

\subsection{Explicit expression of $d^*_\calA$}\label{app:dual}
For a coordinate-wise minimizer $x^*$, we derive the explicit expression shown in Table
\ref{tab:activeset} for the dual variable
$d^* = \Psi^t( y - \Psi x^*)$ on the active set ${\cal A} = \{i: x_i^* \neq 0\}$.

\noindent(i) $\ell^0$. By the expression of $S_{\lambda}^{\ell^0}$, we have $d_i^* x_i^*
= 0$, and hence $d^*_i = 0$, for $i\in \calA$.

\noindent(ii) bridge. Since for $i\in\calA$, $J(x^*)$ is differentiable along the direction
$e_i$ at point $x_i^*$, the necessary optimality condition for $x_i^*$ reads $d_i^* -
\lambda\tau \frac{|x_i^*|^\tau}{x_i^*} = 0$.

\noindent(iii) capped-$\ell^1$. We divide the active set $\calA$ into $\calA=\cup_i
\calA_i$, with ${\cal A}_1= \{i: |x_i^* + d_i^*| > \lambda(\tau + \tfrac{1}{2})\}$,
 ${\cal A}_2 = \{i: \lambda < |x_i^* + d_i^*| < \lambda(\tau + \tfrac{1}{2})\}$,
and ${\cal A}_3 = \{i: |x_i^* + d_i^*| = \lambda(\tau + \tfrac{1}{2})\}$. Then
the definition of the operator $S^{c\ell^1}_{\lambda,\tau}$ gives the desired expression.

\noindent(iv) SCAD. We divide the active set $\calA$ into $\calA=\cup_i\calA_i$ with
 ${\cal A}_1 = \{i: |x_i^* + d_i^*|\geq \lambda\tau\}$, ${\cal A}_2 = \{i: \lambda\tau >
|x_i^* + d_i^*| > 2\lambda\}$, and ${\cal A}_3 = \{i:2\lambda \geq |x_i^* + d_i^*| > \lambda \}$.
Then it follows from the necessary optimality condition for $x_i^*$ that the desired
expression holds.

\noindent(v) MCP. Similar to case (iv), we divide  the active set $\calA$ into
$\calA = \cup_i\calA_i$ with ${\cal A}_1 = \{i: |x_i^* + d_i^*| \geq \lambda\tau\}$
and ${\cal A}_2 = \{i: \lambda <|x_i^* + d_i^*| < \lambda\tau\}$. Then the desired
expression follows from the optimality condition for $x_i^*$.

\subsection{A counterexample}\label{counterexample}

In this part, we construct a counterexample to show that a coordinate-wise minimizer is not
necessarily a local minimizer. The example is two-dimensional and with the MCP penalty.
The minimization problem reads:
\begin{equation*}
  \min_{x} J(x) =\tfrac{1}{2}\|y- \Psi x\|^2 +  \sum_{i=1}^2 \rho_{\lambda,\tau}(x_i)
\end{equation*}
with $\rho_{\lambda,\tau}$ being the MCP penalty. Consider the following design matrix $\Psi$, data $y$ and the vector $x^*$:
\begin{equation*}
  \Psi = \frac{1}{\sqrt{2}} \left(  \begin{array}{cc}
                 1 & 1 \\  1 & 1
      \end{array}\right),\quad
  y =\left(\begin{array}{c}
             1 \\  -1
  \end{array}  \right),\quad \mbox{and}\quad x^* = \tau \lambda \left(
   \begin{array}{c} 1 \\  -1  \end{array} \right) .
\end{equation*}
Simple computation shows that $\Psi x^*  = \textbf{0}$ and $\Psi^t y = \textbf{0}.$ By Lemma
\ref{lem:necopt}, $x^{*}$ is a coordinate-wise minimizer since it satisfies \eqref{eqn:nec}.
Next we show that, $x^{*}$ is not a local minimizer. To this end, let $x_t = x^{*} - t y$
with an arbitrarily small positive number $t$, then $\Psi x_t = \textbf{0}$. Straightforward
computation indicates
\begin{equation*}
  J(x^{*}) = \frac{1}{2}\|y\|^2 + \sum_{i=1}^2 \rho_{\lambda,\tau}(\lambda\tau)\quad \mbox{  and  }\quad
 J(x_t) = \frac{1}{2}\|y\|^2 + \sum_{i=1}^2 \rho_{\lambda,\tau}(\lambda\tau-t).
\end{equation*}
Then, $J(x_t)< J(x^{*})$ for any small positive number $t$.

\subsection{Proof of Theorem \ref{thm:convergence}}\label{app:convergence}
First we recall some estimates for the RIP constant $\delta_k$ (see, e.g.,
\cite{NeedellTropp:2009,TroppWright:2010}). Let $\cal A\cap \cal  B = \emptyset$ and $\delta_{|\cal A| +|\cal B|}$ exists, then
\begin{eqnarray*}
&\|\Psi_{\cal A}^t\Psi_{\cal A} x_{\cal A}\|\gtreqqless (1\mp \delta_{|{\cal A}|})\|x_{\cal A}\|, \quad &\|(\Psi_{\cal A}^t\Psi_{\cal A})^{-1} x_{\cal A}\|\gtreqqless \tfrac{1}{1\mp \delta_{|{\cal A}|}}\|x_{\cal A}\|, \\
&\|\Psi_{\cal A}^t\Psi_{\cal B}\|\leq \delta_{|{\cal A}|+|{\cal B}|}, \quad & \|\left[I - (\Psi_{\cal A}^t\Psi_{\cal A})^{-1}\right] x_{\cal A}\| \leq \tfrac{\delta_{|\cal A|}}{1- \delta_{|{\cal A}|}}\|x_{\cal A}\|.
\end{eqnarray*}
Given any index set ${\cal A} \subset {\cal A}^\dag$, we denote ${\cal I} = {\cal A}^c$ and $ {\cal B} = {\cal A}^\dag\backslash {\cal A}$, and further, let
\begin{equation*}
x_{{\cal A}} = (\Psi_{{\cal A}}^t\Psi_{{\cal A}})^{-1}(\Psi_{{\cal A}}^t y - p_{{\cal A}}), \quad d_{{\cal A}} = \Psi_{{\cal A}}^t( y - \Psi_{{\cal A}}x_{{\cal A}}),
\end{equation*}
Then we have $d_{{\cal A}} = p_{{\cal A}}$. By noting the trivial relation $y = \Psi_{\cal A}x_{\cal A}^\dag + \Psi_{\cal B}x_{\cal B}^\dag$, we deduce
\begin{equation}\label{equ:app0}
  \begin{aligned}
  \|x_{\cal A} - x_{\cal A}^\dag\| &\leq \|(\Psi^t_{\cal A}\Psi_{\cal A})^{-1}\Psi_{\cal A}^t\Psi_{\cal B}x_{\cal B}^\dag\| + \| (\Psi^t_{\cal A}\Psi_{\cal A})^{-1}p_{\cal A}\|\\
  & \leq \tfrac{\delta}{1-\delta}\|x_{\cal B}^\dag\| + \tfrac{1}{1-\delta}\|p_{\cal A}\|.
  \end{aligned}
\end{equation}
Then by appealing to the identity $d_i = \Psi_i^t(\Psi_{\cal B}x_{\cal B}^\dag - \Psi_{\cal A}(x_{\cal A} - x_{\cal A}^\dag))$ and \eqref{equ:app0}, we find
\begin{eqnarray}
&&\|x_{\cal A} + d_{\cal A}- x_{\cal A}^\dag\|  \leq \tfrac{\delta}{1-\delta}\|x_{\cal B}^\dag\| + \tfrac{\delta}{1-\delta}\|p_{\cal A}\| \triangleq h_{\cal A},\label{equ:app1}\\
&&|d_i| \geq |x_i^\dag| - \delta\|(x_{\cal A} - x_{\cal A}^\dag)\| -\delta\|x_{\cal B}^\dag\| \geq |x_i^\dag| - h_{\cal A}, \quad \forall i\in {\cal B},\label{equ:app2}\\
&&|d_i| \leq \delta\|(x_{\cal A} - x_{\cal A}^\dag)\| + \delta\|x_{\cal B}^\dag\|\leq h_{\cal A}, \quad \forall i\in {\cal I}^\dag. \label{equ:app3}
\end{eqnarray}
Next we define the index set $G_{\lambda,s}$ by
\begin{equation}\label{equ:sets}
  G_{\lambda,s} \triangleq \left\{
  \begin{array}{ll} \left\{i: |x_i^\dag| \geq \lambda s \right\} & \mbox{capped-}\ell^1,\ \mbox{SCAD}, \; \mbox{MCP}, \\
    \left\{i: |x_i^\dag| \geq (\lambda s)^{\tfrac{1}{2-\tau}}\right\} & \mbox{bridge}. \end{array} \right.
\end{equation}

The general strategy of the proof is similar to that in \cite{FanJiaoLu:2014,JiaoJinLu:2015}. It relies crucially on
the following monotonicity property on the active set. Namely, the evolution of the active set during the iteration can be precisely
controlled, by suitably choosing the decreasing factor $\rho$ and $s$.
\begin{lemma}\label{lem:monotone}
For $\rho\in(0,1)$ close to unity and some $s>0$, there holds
\begin{equation}\label{equ:key}
G_{\lambda,s} \subset {\cal{A}}_k \subset {\cal{A}}^\dag \Rightarrow G_{\rho\lambda,s} \subset {\cal{A}}_{k+1} \subset {\cal{A}}^\dag.
\end{equation}
\end{lemma}
\begin{proof}
Assume for some inner iteration $G_{\lambda,s} \subset {\cal{A}}_k \subset {\cal{A}}^\dag $. Let ${\cal A} = {\cal A}_k$ and ${\cal B} = {\cal A}^\dag \backslash {\cal A}$. First we derive upper bounds on the crucial term $h_{\cal A}$ in \eqref{equ:app1}. It follows from \eqref{eqn:bound} and the definition of $G_{\lambda,s}$ that
\begin{equation*}
h_{\cal A} \leq \left\{\begin{array}{ll}\frac{\delta}{1-\delta}\left(s\lambda\sqrt{|B|} + \lambda\sqrt{|A|}\right) & \mbox{capped-}\ell^1,\ \mbox {MCP},\\[1.2ex]
\tfrac{\delta}{1-\delta}\left(s\lambda\sqrt{|B|} + \lambda\tfrac{\tau}{\tau-1}\sqrt{|A|}\right)& \mbox{SCAD}, \\[1.2ex]
\tfrac{\delta}{1-\delta}\left((s\lambda)^{\frac{1}{2-\tau}}\sqrt{|B|} + (\lambda c_\tau)^{\frac{1}{2-\tau}}\sqrt{|A|}\right)& \mbox{bridge}, \end{array}\right.
\end{equation*}
where the constant $c_\tau = [2(1-\tau)]^{\tau -1}$. Upon noting $|A| + |B| = T$ and the elementary
inequality $a\sqrt{t} + b\sqrt{T-t} \leq \sqrt{a^2+b^2}\sqrt{T}$, we deduce
\begin{equation}\label{equ:ha}
h_{\cal A} \leq \left\{\begin{array}{ll}\frac{\delta}{1-\delta}\sqrt{s^2 +1}\sqrt{T}\lambda & \mbox{capped-}\ell^1, \;\mbox{MCP},\\[1.2ex]
\tfrac{\delta}{1-\delta}\sqrt{s^2 + \tfrac{\tau^2}{(\tau-1)^2} }\sqrt{T}\lambda& \mbox{SCAD}, \\[1.2ex]
\tfrac{\delta}{1-\delta}\sqrt{(\frac{s}{c_\tau})^{\frac{2}{2-\tau}}+1} \sqrt{T} (c_\tau\lambda) ^{\tfrac{1}{2-\tau}} & \mbox{bridge}. \end{array}\right.
\end{equation}
Now we prove \eqref{equ:key} for different penalties. In view of \eqref{equ:app1}-\eqref{equ:app3}, it suffices to show
$h_{\cal A} < T^*$ and $\rho s\lambda - h_{\cal A} > T^*$, where $T^*$ is given in Lemma \ref{thm:tT}.\\
{\bf Capped-$\ell^1$ and MCP:} Since $\delta < \frac{1}{\sqrt{5T}+1}$, $\frac{\delta}{1-\delta}\sqrt{5T} <1 $. Then we choose $s=2$ and $\rho\in (\frac{1+\frac{\delta}{1-\delta}\sqrt{5T}}{2},1)$. It follows from \eqref{equ:ha} that
\begin{eqnarray*}
&&h_{\cal A} \leq \tfrac{\delta}{1-\delta}\sqrt{5T} \lambda < \lambda \Rightarrow {\cal A}_{k+1} \subset {\cal A}^\dag,\\
&&\rho s\lambda - h_{\cal A} \geq 2\rho \lambda - \tfrac{\delta}{1-\delta}\sqrt{5T} \lambda > \lambda \Rightarrow G_{\rho\lambda,s}\subset {\cal A}_{k+1}.
\end{eqnarray*}
{\bf SCAD:} Like before, since $\delta < \frac{1}{\sqrt{8T}+1}$, we deduce $\frac{\delta}{1-\delta}\sqrt{8T} <1 $. We choose $s=2$ and $\rho\in (\frac{1+\frac{\delta}{1-\delta}\sqrt{8T}}{2},1)$. Then by \eqref{equ:ha} and noting $\tau>2 \Rightarrow \frac{\tau}{\tau-1} <2$, we obtain
\begin{eqnarray*}
&&h_{\cal A} \leq \tfrac{\delta}{1-\delta}\sqrt{4 + \tfrac{\tau^2}{(\tau-1)^2}}\sqrt{T} \lambda < \lambda \Rightarrow {\cal A}_{k+1} \subset {\cal A}^\dag,\\
&&\rho s\lambda - h_{\cal A} \geq 2\rho \lambda - \tfrac{\delta}{1-\delta}\sqrt{8T} \lambda > \lambda \Rightarrow G_{\rho\lambda,s}\subset {\cal A}_{k+1}.
\end{eqnarray*}
{\bf Bridge:} Recall $T^* = (2-\tau)(c_\tau\lambda)^{\frac{1}{2-\tau}}$, cf. Lemma \ref{thm:tT}. Since $\delta < \frac{2-\tau}{2-\tau + \sqrt{T[(4- 2\tau)^2 +1]}}$, let $\frac{s}{c_\tau} = (4-2\tau)^{2-\tau}$ and we have $\frac{\delta}{1-\delta}\sqrt{(\frac{s}{c_\tau})^{\frac{2}{2-\tau}}+1} \sqrt{T} \leq 2-\tau$. By choosing $\rho \in (\frac{2-\tau + \frac{\delta}{1-\delta}\sqrt{T[(4- 2\tau)^2 +1]}}{4-2\tau},1)$, we deduce
\begin{eqnarray*}
&&h_{\cal A} \leq \tfrac{\delta}{1-\delta}\sqrt{(\tfrac{s}{c_\tau})^{\frac{2}{2-\tau}}+1} \sqrt{T} (c_\tau\lambda) ^{\frac{1}{2-\tau}} < T^* \Rightarrow {\cal A}_{k+1} \subset {\cal A}^\dag,\\
&&(\rho s\lambda)^{\frac{1}{2-\tau}} - h_{\cal A} - T^* \geq (c_\tau\lambda) ^{\frac{1}{2-\tau}} \left(\rho(4-2\tau) - (2-\tau) - \tfrac{\delta}{1-\delta}\sqrt{T[(4- 2\tau)^2 +1]}\right) > 0\Rightarrow G_{\rho\lambda,s}\subset {\cal A}_{k+1}.
\end{eqnarray*}
This completes the proof of the lemma.
\end{proof}

Now we can give the proof of Theorem \ref{thm:convergence}.
\begin{proof}
For each $\lambda_k$-problem, we denote by ${\cal A}_{k,0}$ and ${\cal A}_{k,\diamond}$ the active set for the
initial guess and the last inner step (i.e., ${\cal A}(\lambda_k)$ in Algorithm \ref{alg:pdasc}), respectively.
Since $\lambda_0$ is large enough, we deduce that $G_{\lambda_1,s} =
\varnothing$  and  $G_{\lambda_1,s}\subset {\cal A}_{1,0}$. Then mathematics induction and by Lemma \ref{lem:monotone},
for any $k$ we have
\begin{equation}\label{equ:G}
  G_{\lambda_k,s} \subseteq {\cal A}_{k,0}\subset {\cal{A}}^\dag \quad\mbox{and}\quad G_{\rho\lambda_k,s}\subseteq {\cal A}_{k,\diamond}\subset {\cal{A}}^\dag.
\end{equation}
Therefore Algorithm \ref{alg:pdasc} is well-defined and when $k$ is large such that
\begin{equation*}
s \lambda_k< \left\{  \begin{array}{ll} \min\left\{|x_i^\dag|: x_i^\dag \neq 0 \right\} & \mbox{capped-}\ell^1, \mbox{SCAD}, \mbox{MCP}, \\
    (\min\left\{|x_i^\dag|: x_i^\dag \neq 0 \right\})^{2-\tau} & \mbox{bridge}, \end{array} \right.
\end{equation*}
we have ${\cal A}({\lambda_k}) = {\cal A}^\dag$ and hence Algorithm \ref{alg:pdas} converges in one step. To show the convergence of
the sequence of solutions to the true solution $x^\dag$, it suffices to check $\lim_{k\rightarrow \infty} p_{\cal A^\dag}(\lambda_k) = 0$. The convergence of $p$ follows
from the particular choice in Table \ref{tab:activeset} and its boundedness in \eqref{eqn:bound}. Hence we have
\begin{equation*}
x(\lambda_k)_{\cal A^\dag} = (\Psi_{\cal A^\dag}^t\Psi_{\cal A^\dag})^{-1}(\Psi_{\cal A^\dag}^t y - p_{\cal A^\dag}(\lambda_k)) \rightarrow x^\dag_{\cal A^\dag}.
\end{equation*}
This completes the proof of Theorem \ref{thm:convergence}.
\end{proof}

\end{document}